\numberwithin{equation}{section}
\numberwithin{figure}{section}
  \theoremstyle{plain}
  \newtheorem*{thm*}{\protect\theoremname}
\theoremstyle{plain}
\newtheorem{thm}{\protect\theoremname}
  \theoremstyle{plain}
  \newtheorem{cor}[thm]{\protect\corollaryname}
  \theoremstyle{plain}
  \newtheorem{lem}[thm]{\protect\lemmaname}
  \theoremstyle{remark}
  \newtheorem*{rem*}{\protect\remarkname}
 \theoremstyle{definition}
 \newtheorem*{defn*}{\protect\definitionname}
  \providecommand{\corollaryname}{Corollary}
  \providecommand{\definitionname}{Definition}
  \providecommand{\lemmaname}{Lemma}
  \providecommand{\remarkname}{Remark}
  \providecommand{\theoremname}{Theorem}
\providecommand{\theoremname}{Theorem}
\begin{document}

\title[Isoptic point and Simson line of a quadrilateral]{The perpendicular bisectors construction, the isoptic point and
the Simson line of a quadrilateral }

\author{Olga Radko and Emmanuel Tsukerman}
\begin{abstract}
Given a noncyclic quadrilateral, we consider an iterative procedure
producing a new quadrilateral at each step. At each iteration, the
vertices of the new quadrilateral are the circumcenters of the triad
circles of the previous generation quadrilateral.\textcolor{red}{{}
}\textcolor{black}{The main goal of the paper is to prove a number
of interesting properties of the limit point of this iterative process.
We show that the limit point is the common center of spiral similarities
taking any of the triad circles into another triad circle. As a consequence,
the point has the isoptic property (i.e., all triad circles are visible
from the limit point at the same angle). Furthermore, the limit point
can be viewed as a generalization of a circumcenter. It also has properties
similar to those of the isodynamic point of a triangle.} We also characterize
the limit point as the unique point for which the pedal quadrilateral
is a parallelogram. Continuing to study the pedal properties with
respect to a quadrilateral, we show that for every quadrilateral there
is a unique point (which we call the Simson point) such that its pedal
consists of four points on line, which we call the Simson line, in
analogy to  the case of a triangle. Finally, we define a version of
isogonal conjugation for a quadrilateral and prove that the isogonal
conjugate of the limit point is a parallelogram, while that of the
Simson point is a degenerate quadrilateral whose vertices coincide
at infinity. 
\end{abstract}
\maketitle

\section{Introduction\label{sec:Introduction}}

\textcolor{black}{The perpendicular bisector construction that we
investigate in this paper arises very naturally in an attempt to find
a replacement for a circumcenter in the case of a noncyclic quadrilateral
$Q^{(1)}=A_{1}B_{1}C_{1}D_{1}$. Indeed, while there is no circle
going through all four vertices, for every triple of vertices there
is a unique circle (called the }\textcolor{black}{\emph{triad circle}}\textcolor{black}{)
passing through them. The centers of these four triad circles can
be taken as the vertices of a new quadrilateral, and the process can
be iterated to obtain a sequence of noncyclic quadrilaterals: $Q^{(1)},Q^{(2)},Q^{(3)},\dots.$ }

To reverse the iterative process, one finds isogonal conjugates of
each of the vertices with respect to the triangle formed by the remaining
vertices of the quadrilateral.

It turns out that all odd generation quadrilaterals are similar, and
all even generation quadrilaterals are similar. Moreover, there is
a point that serves as the center of spiral similarity for any pair
of odd generation quadrilaterals as well as for any pair of even generation
quadrilaterals. The angle of rotation is $0$ or $\pi$ depending
on whether the quadrilateral is concave or convex, and the ratio $r$
of similarity is\textcolor{blue}{{} }\textcolor{black}{a constant that
is negative for convex noncyclic quadrilaterals, zero for cyclic quadrilaterals,
and $\geq1$ for concave quadrilaterals.} If $|r|\neq1$, the same
special point turns out to be the limit point for the iterative process
or for the reverse process. 

The main goal of this paper is to prove the following
\begin{thm*}
For each quadrilateral $Q^{(1)}=A_{1}B_{1}C_{1}D_{1}$ there is a
unique point $W$ that has any (and, therefore, all) of the following
properties:
\begin{enumerate}
\item \label{enu:ss center}$W$ is the center of the spiral similarity
for any two odd (even) generation quadrilaterals in the iterative
process;
\item \label{enu: ratio}Depending on the value of the ratio of similarity
in the iterative process, there are the following possibilities:

\begin{enumerate}
\item If $|r|<1$, the quadrilaterals in the iterated perpendicular bisectors
construction converge to $W$;
\item If $|r|=1$, the iterative process is periodic (with period $2$ or
$4$); $W$ is the common center of rotations for any two odd (even)
generation quadrilaterals;
\item If $|r|>1$, the quadrilaterals in the reverse iterative process (obtained
by isogonal conjugation) converge to $W$;
\end{enumerate}
\item \label{enu: W on CSs}$W$ is the common point of the six circles
of similitude $CS(o_{i},o_{j})$ for any pair of triad circles $o_{i},o_{j}$,
$i,j\in\{1,2,3,4\}$, where $o_{1}=(D_{1}A_{1}B_{1})$, $o_{2}=(A_{1}B_{1}C_{1})$,
$o_{3}=(B_{1}C_{1}D_{1})$, $o_{4}=(C_{1}D_{1}A_{1})$. 
\item \label{enu:isoptic}(isoptic property) Each of the triad circles is
visible from $W$ at the same angle.
\item \label{enu:circumcenter}(generalization of circumcenter) The (directed)
angle subtended by any of the quadrilateral's sides at $W$ equals
to the sum of the angles subtended by the same side at the two remaining
vertices.
\item \label{enu:isodynamic}(isodynamic property) The distance from $W$
to any vertex is inversely proportional to the radius of the triad
circle determined by the remaining three vertices. 
\item \label{enu: W as inversion}$W$ is obtained by inversion of any of
the vertices of the original quadrilateral in the corresponding triad-circle
of the second generation:
\[
W=\mbox{Inv}_{o_{1}^{(2)}}(A)=\mbox{Inv}_{o_{2}^{(2)}}(B)=\mbox{Inv}_{o_{3}^{(2)}}(C)=\mbox{Inv}_{o_{4}^{(2)}}(D),
\]
where $ $$o_{1}^{(2)}=(D_{2}A_{2}B_{2})$, $o_{2}^{(2)}=(A_{2}B_{2}C_{2})$,
$o_{3}^{(2)}=(B_{2}C_{2}D_{2})$, $o_{4}^{(2)}=(C_{2}D_{2}A_{2})$. 
\item \label{enu: W isogonal conj}$W$ is obtained\textcolor{blue}{{} }\textcolor{black}{by
composition of isogonal conjugation of a vertex in the triangle formed
by the remaining vertices and inversion in the circumcircle of that
triangle.}
\item \label{enu:W on CSs of all generations} $W$ is the center of spiral
similarity for any pair of triad circles (of possibly different generations).
That is, $W\in CS(o_{i}^{(k)},\ o_{j}^{(l)})$ for all $i,j,k,l$.
\item \label{enu:pedal of W}The pedal quadrilateral of $W$ is a (nondegenerate)
parallelogram. Moreover, its angles equal to the angles of the Varignon
parallelogram. 
\end{enumerate}
\end{thm*}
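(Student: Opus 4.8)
\emph{Overall approach.} The plan is to make property~\eqref{enu: W on CSs}---that the six circles of similitude $CS(o_i,o_j)$ have a common point $W$---the keystone: first construct $W$ and prove it is unique, then read off the ``static'' properties~\eqref{enu:isoptic}, \eqref{enu:circumcenter}, \eqref{enu:isodynamic}, \eqref{enu:pedal of W} by angle/length chasing at $W$, and treat the ``dynamical'' properties~\eqref{enu: ss center}--\eqref{enu: ratio} and~\eqref{enu: W as inversion}--\eqref{enu:W on CSs of all generations} through the perpendicular‑bisector construction itself.

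\emph{Constructing $W$.} I would work in the ``linear algebra of circles'': write each circle as its monic defining polynomial $o(z)=|z|^{2}+(\text{affine in }z,\bar z)$, so that $o(X)$ is the power of $X$ with respect to that circle. One checks directly that, up to a nonzero scalar, $CS(o_{i},o_{j})=r_{j}^{2}o_{i}-r_{i}^{2}o_{j}$, where $r_i$ is the radius of $o_i$; in particular $CS(o_i,o_j)$ is a linear combination of $o_i$ and $o_j$, hence coaxial with them and passing through their two common vertices (e.g.\ $CS(o_1,o_2)$ passes through $A_1$ and $B_1$, since $o_1\cap o_2=\{A_1,B_1\}$). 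The identity $\frac{r_3^2}{r_2^2}\,CS(o_1,o_2)+\frac{r_1^2}{r_2^2}\,CS(o_2,o_3)=CS(o_1,o_3)$ shows that the three circles of similitude of any three of the triad circles are coaxial; combined with the incidence just noted, for each vertex $V$ (which lies on exactly three of the four triad circles) the three circles of similitude of those three circles meet at $V$ and at a second point $W_V$. The heart of the proof is the coincidence $W_{A_1}=W_{B_1}=W_{C_1}=W_{D_1}=:W$; equivalently, the over‑determined system $\mathrm{pow}(X,o_i)=t\,r_i^{2}$, $i=1,\dots,4$, must have a solution $(X,t)$: three of the pairwise differences are linear and pin down $(X,t)$ uniquely, and the remaining difference has to be forced by the four ``interlocking'' relations satisfied by the triad circles of an honest quadrilateral (equivalently, by the fact that the profile $(\mathrm{pow}(X,o_1),\dots,\mathrm{pow}(X,o_4))$ has three vanishing coordinates at each of $A_1,B_1,C_1,D_1$). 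I expect this coincidence to be the main obstacle: it is false for four arbitrary circles, so it must use the quadrilateral structure essentially, and absent a short synthetic reason I would fall back on the explicit complex‑coordinate formulas for the circumcenters and circumradii of the four triads and verify it by a lengthy but routine computation, organized around the isoptic reformulation and the cyclic symmetry $A_1\to B_1\to C_1\to D_1$. Uniqueness of $W$ is immediate, since the six circles are not all equal ($CS(o_1,o_2)$ passes through $A_1,B_1$ while $CS(o_3,o_4)$ passes through $C_1,D_1$).

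\emph{Static properties.} Property~\eqref{enu:isoptic} is then immediate: $CS(o_i,o_j)$ is precisely the locus of points from which $o_i$ and $o_j$ subtend equal angles, so if $W$ lies on all six then all four triad circles subtend one common angle $\theta$ at $W$; equivalently $|WO_i|=r_i/\sin(\theta/2)$ for every $i$, whence $\mathrm{pow}(W,o_i)=r_i^{2}\cot^{2}(\theta/2)$. Property~\eqref{enu:isodynamic} follows by combining these relations with the fact that each vertex lies on three of the $o_i$: a short law‑of‑cosines (or spiral‑similarity) computation in the triangles $WO_iV$ yields $|WV|\cdot r_{\mathrm{opp}(V)}=\mathrm{const}$, where $o_{\mathrm{opp}(V)}$ is the triad circle through the other three vertices. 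Property~\eqref{enu:circumcenter} follows from the inscribed‑angle theorem applied inside the triad circles and inside the circle $CS(o_i,o_j)$ through the relevant side, tracked with directed angles modulo $\pi$. For~\eqref{enu:pedal of W}: the feet of the perpendiculars from $W$ to two sides through a common vertex $V$ lie, together with $V$ and $W$, on the circle of diameter $WV$, so the corresponding side of the pedal quadrilateral has length $|WV|\sin\angle V$ and a direction fixed by $\angle(WV,\cdot)$; feeding in the isoptic and isodynamic relations shows opposite sides are equal and parallel, so the pedal is a nondegenerate parallelogram whose sides are parallel to the diagonals $A_1C_1$, $B_1D_1$, hence with the Varignon angles.

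\emph{Dynamical properties.} Here I would use the construction directly. Each side of $Q^{(2)}$ joins the circumcenters of two triads of $Q^{(1)}$ that share two vertices, so it lies on the perpendicular bisector of the corresponding side of $Q^{(1)}$; likewise the two diagonals of $Q^{(2)}$ lie on the perpendicular bisectors of the diagonals of $Q^{(1)}$. Hence every side and diagonal of $Q^{(3)}$ is perpendicular to the corresponding one of $Q^{(2)}$, therefore parallel to the corresponding one of $Q^{(1)}$; and a quadrilateral all of whose sides and diagonals are parallel to those of another is its image under a homothety $g$ of real ratio $r$ (for noncyclic $Q^{(1)}$ one checks $g$ is a genuine homothety, and that the cases $r<0$, $r=0$, $r\ge 1$ occur exactly as stated). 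Since taking circumcenters commutes with similarities, $g$ intertwines the whole iteration---$Q^{(k+2)}=g(Q^{(k)})$ for every $k$, with one and the same $g$---so the sequence converges to the center of $g$ when $|r|<1$, the reverse (isogonal‑conjugate) orbit converges to it when $|r|>1$, and $g$ is the identity or a point reflection when $|r|=1$, giving period $2$ or $4$; this is~\eqref{enu: ss center}--\eqref{enu: ratio}. Finally, the observation that an inversion centered at a point $X$ preserves the ratio $\mathrm{pow}(X,o)/(\mathrm{radius}\ o)^{2}$, together with the classical relation between isogonal conjugation in a triangle and inversion in its circumcircle, is used to identify the passage from one generation of triad circles to the next with an inversion centered at $W$: this gives~\eqref{enu: W as inversion} and~\eqref{enu: W isogonal conj}, shows that $W$ lies on the circles of similitude of every generation (so~\eqref{enu:W on CSs of all generations}) and, by uniqueness, that $W$ is generation‑independent; since $g$ carries the circle‑of‑similitude configuration of $Q^{(1)}$ onto that of $Q^{(3)}$ it then fixes $W$, so the center of $g$ is $W$. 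I expect the two genuinely hard points to be the coincidence $W_{A_1}=\dots=W_{D_1}$ above and the identification of consecutive generations of triad circles by an inversion centered at $W$; everything else is bookkeeping.
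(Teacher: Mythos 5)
Your plan has the same architecture as the paper's proof---make the concurrence of the six circles of similitude the keystone, read off the static properties at $W$, and handle the dynamics through the construction itself---but it leaves the keystone unproved. The coincidence $W_{A_1}=W_{B_1}=W_{C_1}=W_{D_1}$ is, as you say yourself, the real content of the theorem (it fails for four generic circles), and ``fall back on a lengthy but routine computation'' is a promissory note, not a proof. The paper's entire Section~3.2 is devoted to exactly this point, and the synthetic idea you are missing is the following: letting $W$ be the second intersection of $CS(o_1,o_2)$ and $CS(o_1,o_4)$, one shows (via a joint-point/Miquel-type argument, Lemmas~5--7 there) that the spiral similarity centered at $W$ taking $o_1$ to $o_4$ carries the \emph{third} vertex $B_1$ of $o_1$ to the \emph{third} vertex $C_1$ of $o_4$ (and likewise $H^W_{1,2}(D_1)=C_1$); then, since a spiral similarity is determined by two points and their images, the second intersection $\widetilde W$ of $CS(o_1,o_2)$ and $CS(o_2,o_3)$ satisfies $H^{\widetilde W}_{1,2}=H^{W}_{1,2}$ and hence $\widetilde W=W$. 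Your pencil-of-circles framework (with $CS(o_i,o_j)\propto r_j^2 o_i-r_i^2 o_j$ and the resulting coaxiality of the three circles of similitude attached to each vertex) is a correct and clean way to \emph{organize} the problem, but it reduces the concurrence to one extra scalar identity that still has to be established.

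Two further concrete problems. First, your derivation of the Varignon angles asserts that the sides of the pedal parallelogram of $W$ are parallel to the diagonals $A_1C_1$ and $B_1D_1$. This is false in general: a directed-angle computation in the circle with diameter $WA_1$ shows $W_dW_a\parallel B_1D_1$ if and only if $W$ lies on the line $A_1A_2$, whereas $W$ lies on $A_1A_3$. What is true (and what the paper proves via its Lemma on pedals of points of $CS(o_2,o_4)$) is only that $W_dW_a\parallel W_bW_c$---equivalent, by the angle-addition property of the circle of similitude, to $W\in CS(o_2,o_4)$---and that the angle between adjacent sides of the pedal equals the angle between the diagonals. Second, your route to properties (7)--(9) hinges on ``identifying the passage from one generation of triad circles to the next with an inversion centered at $W$''; that identification is Scimemi's theorem and is nowhere established in your sketch. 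The paper instead proves (7) directly by inverting the whole configuration at $A_1$ (the three circles of similitude through $A_1$ become the perpendicular bisectors of $\triangle D_2'A_2'B_2'$, so $W'$ is its circumcenter), and proves (9) by a separate ``cross-spiral'' argument (its Lemmas~16--18). That said, two of your ideas are genuinely better than the paper's where they apply: the observation that the perpendicular-bisector construction commutes with similarities gives $Q^{(k+2)}=g(Q^{(k)})$ for a single $g$ and hence the coincidence of the odd and even centers in one line (the paper runs an explicit similar-triangles argument with midpoints), and the invariance of $\mathrm{pow}(X,o)/r_o^2$ under inversions centered at $X$ would indeed shortcut property (9) if the generation map were first shown to be such an inversion.
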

Many of these properties of $W$ were known earlier. In particular,
several authors (G.T. Bennett in an unpublished work, De Majo \cite{De Majo},
H.V. Mallison \cite{Mallison}) have considered a point that is defined
as the common center of spiral similarities. Once the existence of
such a point is established, it is easy to conclude that all the triad
circles are viewed from this point under the same angle (this is the
so-called \emph{isoptic property}). Since it seems that the oldest
reference to the point with such an isoptic property is to an unpublished
work of G.T. Bennett given by H.F. Baker \cite{Baker} in his {}``Principles
of Geometry'' (vol. 4), in 1925, we propose to call the center of
spiral similarities in the iterative process \emph{Bennett's isoptic
point.} 

C.F. Parry and M.S. Longuet-Higgins \cite{Reflections 3} showed the
existence of a point with property \ref{enu: W as inversion} using
elementary geometry. 

\textcolor{black}{Mallison \cite{Mallison} defined $W$ using property
\ref{enu: W on CSs} and credited T. McHugh for observing that this
implies property \ref{enu:circumcenter}. }

\textcolor{black}{Several authors, including Wood \cite{Wood} and
De Majo \cite{De Majo}, have looked at the properties of the isoptic
point from the point of view of the unique rectangular hyperbola going
through the vertices of the quadrilateral, and studied its properties
related to cubics. For example, P.W. Wood \cite{Wood} considered
the diameters of the rectangular hyperbola that go through $A,B,C,D$.
Denoting by $\bar{A},\bar{B,}\bar{C},\bar{D}$ the other endpoints
of the diameters, he showed that the isogonal conjugates of these
points in triangles $\triangle BCD$, $\triangle CAD$,$\triangle ABD$,
$\triangle ABC$ coincide. Starting from this, he proved properties
\ref{enu:isoptic} and \ref{enu: W as inversion} of the theorem.
He}\textcolor{blue}{{} }\textcolor{black}{also mentions the reversal
of the iterative process using isogonal conjugation (Also found in
\cite{Wood}, \cite{Scimemi}, \cite{Grinberg}). Another interesting
property mentioned by Wood is that $W$ is the Fregier point of the
center of the rectangular hyperbola for the conic $ABCDO$, where
O is the center of the rectangular hyperbola. }

De Majo \cite{De Majo} uses the property that inversion in a point
on the circle of similitude of two circles transforms the original
circles into a pair of circles whose radii are inversely proportional
to those of the original circles to show that that there is a common
point of intersection of all $6$ circles of similitude. He describes
the iterative process and states property \ref{enu:ss center}, as
well as several other properties of $W$ (including \ref{enu: W isogonal conj}).
Most statements are given without proofs. 

\textcolor{black}{Scimemi \cite{Scimemi} describes a M\"{o}bius
transformation that characterizes $W$: there exists a line going
through $W$ and a circle centered at $W$ such that the product of
the reflection in the line with the inversion in the circle maps each
vertex of the first generation into a vertex of the second generation. }

\textcolor{black}{The fact that the third generation quadrilateral
is similar to the original quadrilateral and the expression for the
ratio of similarity has appeared in the form of a problem by V.V.
Prasolov in \cite{Prasolov,Prasolov-translation}. Independently,
the same question was formulated by J. Langr \cite{Langr} , and the
expression for the ratio (under certain conditions) was obtained by
D. Bennett \cite{D. Bennett} (apparently, no relation to G.T. Bennett
mentioned above), and J. King \cite{King}. A paper by G.C. Shepard
\cite{Shepard} found an expression for the ratio as well. (See \cite{cut-the-knot}
for a discussion of these works). }

Properties \ref{enu:W on CSs of all generations} and \ref{enu:pedal of W}
appear to be new. 

\textcolor{black}{For the convenience of the reader, we give a complete
and self-contained exposition of all the properties in the Theorem
above, as well as proofs of several related statements. }

In addition to investigating properties of $W$, we show that there
is a unique point for which the feet of the perpendiculars to the
sides lie on a straight line. In analogy with the case of a triangle,
we call this line the \emph{Simson} \emph{line} of a quadrilateral
and the point --- the \emph{Simson point.}\textcolor{black}{{} The existence
of such a point is stated in \cite{Johnson-book} where it is obtained
as the intersection of the Miquel circles of the complete quadrilateral. }

Finally, we introduce a version of isogonal conjugation for a quadrilateral
and show that the isogonal conjugate of $W$ is a parallelogram\textcolor{green}{,
}\textcolor{black}{and that of the Simson point is a degenerate quadrilateral
whose vertices are at infinity, in analogy with the case of the points
on the circumcircle of a triangle.}

\section{The iterative process\label{sec: Iterative Process}}

Let $A_{1}B_{1}C_{1}D_{1}$ be a quadrilateral. If $A_{1}B_{1}C_{1}D_{1}$
is cyclic, the center of the circumcircle can be found as the intersection
of the $4$ perpendicular bisectors to the sides of the quadrilateral. 

Assume that $Q^{(1)}=A_{1}B_{1}C_{1}D_{1}$ is a noncyclic quadrilateral%
\footnote{Sometimes we drop the lower index $1$ when denoting vertices of $Q^{(1)}$,
so $ABCD$that and $A_{1}B_{1}C_{1}D_{1}$ are used interchangeably
throughout the paper. %
}. Is there a point that, in some sense, plays the role of the circumcenter?
Let $Q^{(2)}=A_{2}B_{2}C_{2}D_{2}$ be the quadrilateral formed by
the intersections of the perpendicular bisectors to the sides of $A_{1}B_{1}C_{1}D_{1}$.
The vertices $A_{2},B_{2},C_{2},D_{2}$ of the new quadrilateral are
the circumcenters of the triad-triangles $\triangle D_{1}A_{1}B_{1}$
, $\triangle A_{1}B_{1}C_{1}$, $\triangle B_{1}C_{1}D_{1}$ and $\triangle C_{1}D_{1}A_{1}$
formed by vertices of the original quadrilateral taken three at a
time.
\begin{figure}[h]
\includegraphics[scale=0.3]{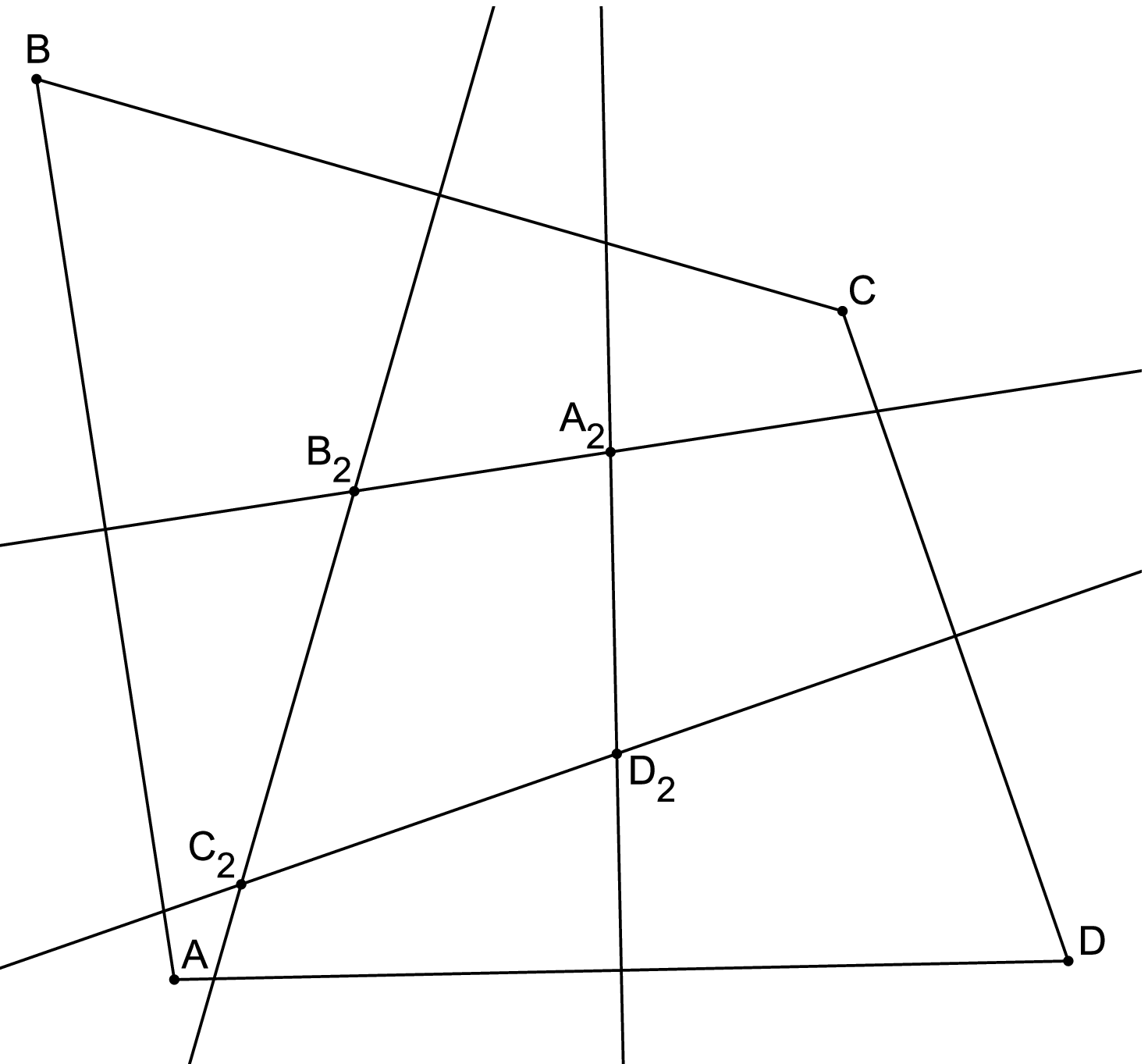}\includegraphics[scale=0.3]{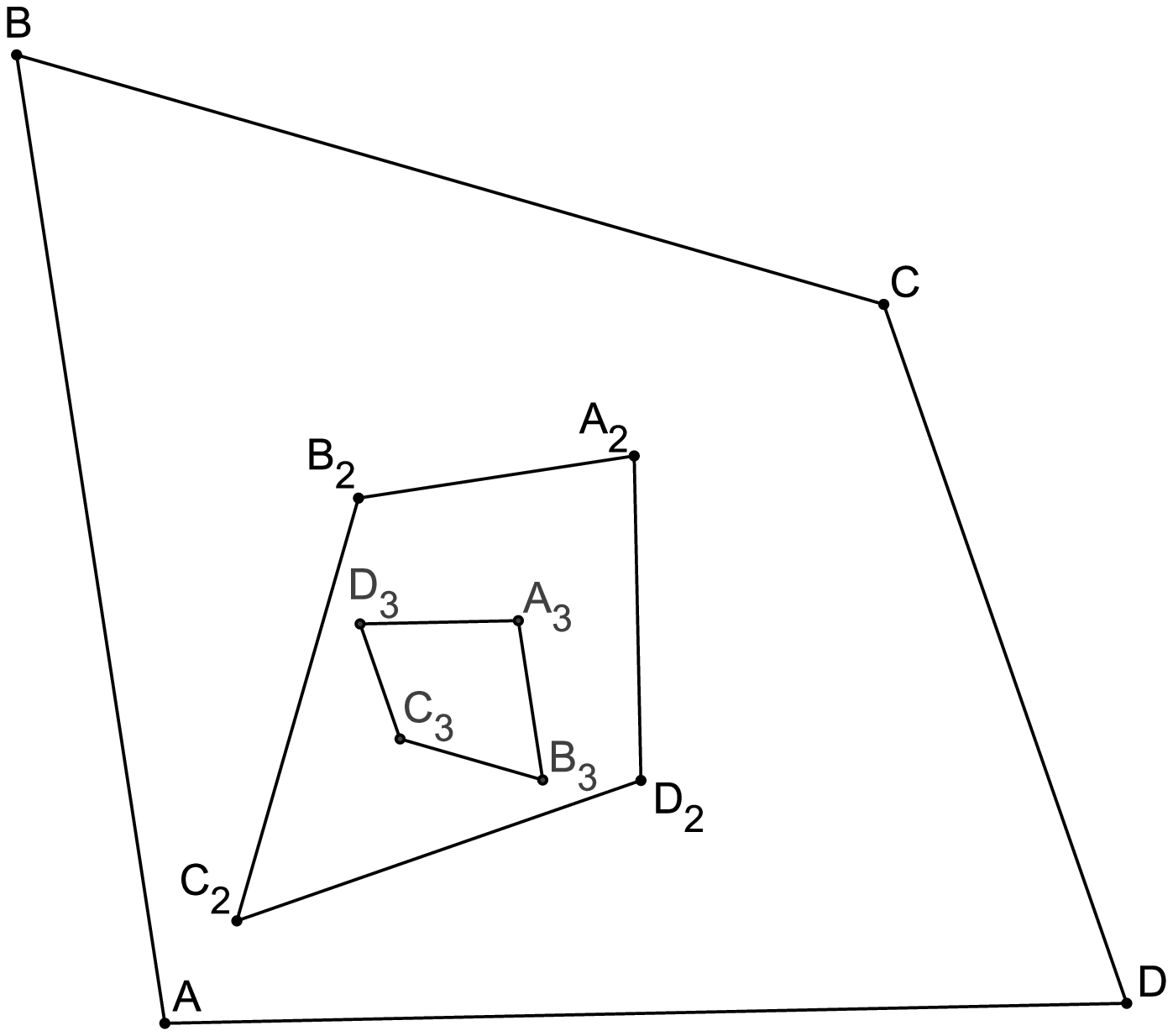}\caption{\label{fig:Iterative Process}The perpendicular bisector construction
and $Q^{(1)},Q^{(2)},Q^{(3)}$.}
\end{figure}

Iterating this process (i.e., constructing the vertices of the next
generation quadrilateral by intersecting the perpendicular bisectors
to the sides of the current one), we obtain the \textcolor{black}{successive}
generations, $Q^{(3)}=A_{3}B_{3}C_{3}D_{3}$, $Q^{(4)}=A_{4}B_{4}C_{4}D_{4}$
and so on, see Figure \ref{fig:Iterative Process}.

The first thing we note about the iterative process is that it can
be reversed using isogonal conjugation. \textcolor{black}{Recall that
given a triangle $\triangle ABC$ and a point $P$, the }\textcolor{black}{\emph{isogonal
conjugate}}\textcolor{black}{{} of $P$ with respect to $\triangle ABC$
(denoted by $\mbox{Iso}_{\triangle ABC}(P)$) is the point of intersection
of the reflections of the lines $AP$, $BP$ and $CP$ in the angle
bisectors of $\angle A$, $\angle B$ and $\angle C$ respectively.
One of the basic properties of isogonal conjugation is that the isogonal
conjugate of $P$ is the circumcenter of the triangle obtained by
reflecting $P$ in the sides of $\triangle ABC$ (see, e.g., \cite{Grinberg}
for more details). This property immediately implies}
\begin{thm}
\label{thm:Reversing using Isogonal Conjugation}The original quadrilateral
$A_{1}B_{1}C_{1}D_{1}$ can be reconstructed from the second generation
quadrilateral $A_{2}B_{2}C_{2}D_{2}$ using isogonal conjugation:
\[
\begin{array}{ccc}
A_{1} & = & \mbox{Iso}_{\triangle D_{2}A_{2}B_{2}}(C_{2}),\\
B_{1} & = & \mbox{Iso}_{\triangle A_{2}B_{2}C_{2}}(D_{2}),\\
C_{1} & = & \mbox{Iso}_{\triangle B_{2}C_{2}D_{2}}(A_{2}),\\
D_{1} & = & \mbox{Iso}_{\triangle C_{2}D_{2}A_{2}}(B_{2}).
\end{array}
\]

\end{thm}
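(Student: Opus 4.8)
The key tool is the classical fact stated just before the theorem: the isogonal conjugate of a point $P$ with respect to a triangle is the circumcenter of the triangle obtained by reflecting $P$ in the three sides of that triangle. The plan is to apply this fact four times, once for each vertex of $Q^{(1)}$, and in each case recognize the relevant reflection triangle as a triad-triangle of $Q^{(1)}$.

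\begin{proof}
We prove the first identity; the remaining three are identical after cyclic relabeling. Recall that $A_2$, $B_2$, $D_2$ are, by construction, the circumcenters of the triad-triangles $\triangle D_1A_1B_1$, $\triangle A_1B_1C_1$, $\triangle C_1D_1A_1$, respectively. Equivalently, $A_2$ lies on the perpendicular bisector of every side of $\triangle D_1A_1B_1$, and similarly for $B_2$ and $D_2$. In particular, $A_2$ and $B_2$ both lie on the perpendicular bisector of the common edge $A_1B_1$, so reflecting $A_1$ in the line $A_2B_2$ yields $B_1$; likewise $B_2$ and $D_2$ both lie on the perpendicular bisector of $A_1D_1$ (as $A_1D_1$ is an edge of $\triangle A_1B_1C_1$? no --- of $\triangle C_1D_1A_1$), hence reflecting $A_1$ in $B_2D_2$ gives $D_1$; and $A_2$ and $D_2$ both lie on the perpendicular bisector of $A_1D_1$ as well? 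Here one must be careful: the three sides of $\triangle D_2A_2B_2$ are $A_2B_2$, $B_2D_2$, $D_2A_2$, and we need the reflections of the point $C_2$ --- not of $A_1$ --- in these three lines. So instead we argue in the other direction.

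Let $P = C_2$, the circumcenter of $\triangle B_1C_1D_1$. Reflect $P$ in the three sides of $\triangle D_2A_2B_2$ and call the reflections $P_{A_2B_2}$, $P_{B_2D_2}$, $P_{D_2A_2}$. By the quoted property of isogonal conjugation, $\mathrm{Iso}_{\triangle D_2A_2B_2}(C_2)$ is the circumcenter of $\triangle P_{A_2B_2}P_{B_2D_2}P_{D_2A_2}$. It therefore suffices to show that these three reflected points are exactly $B_1$, $D_1$ and something whose circumcenter together with them is $A_1$ --- more precisely, I will show $P_{A_2B_2}=P_{B_2D_2}=P_{D_2A_2}$ is false; rather the cleanest route is: reflecting the circumcenter of a triangle in a perpendicular bisector of one of its chords fixes the circumcenter only trivially, so we reflect across the sides $A_2B_2$ etc.\ and identify each image as a vertex of $Q^{(1)}$ using that $A_2,B_2,C_2,D_2$ are circumcenters. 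Concretely, $A_2B_2$ is the perpendicular bisector of $A_1B_1$ and $C_2$ is equidistant from $B_1$ and $D_1$; tracking the reflection of $C_2$ across $A_2B_2$ and using $|A_2C_2|$-type equalities coming from the circumcenter property, one gets that the three images are $A_1$, $B_1$, $D_1$ in some order, whose circumcenter is $A_2$ --- wait, that would give $A_2$, not $A_1$. The correct bookkeeping, which is what I would carry out in detail, is to note that $C_2$ reflected in the three sides of $\triangle D_2A_2B_2$ lands on the three circles $o_1,o_2,o_4$ at points that, by the perpendicular-bisector incidences, are forced to be the single common point $A_1$ of those three circles; hence all three reflections coincide with $A_1$ and a fortiori $\mathrm{Iso}_{\triangle D_2A_2B_2}(C_2)=A_1$.

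Thus the only real content is the identification in the previous paragraph, and the main obstacle is purely one of careful index-tracking: making sure that for each vertex $V_1$ of $Q^{(1)}$ one picks out the triad-triangle of $Q^{(2)}$ whose three sides are perpendicular bisectors of the three sides of the triad-triangle of $Q^{(1)}$ opposite to $V_1$, and the correct ``opposite'' vertex of $Q^{(2)}$ to reflect. Once the combinatorics of the four triad-triangles (which triples of $\{A,B,C,D\}$ share which edges) is laid out explicitly, each of the four claimed identities follows from a single application of the reflection characterization of isogonal conjugates, with no computation beyond recognizing perpendicular-bisector incidences. The four statements are genuinely the same statement under the cyclic symmetry $A\to B\to C\to D\to A$, so writing one carefully and invoking symmetry completes the proof.
\end{proof}
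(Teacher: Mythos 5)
There is a genuine gap, and it sits exactly at the step you yourself flag as ``the only real content.'' Your final claim --- that the three reflections of $C_{2}$ in the sides of $\triangle D_{2}A_{2}B_{2}$ all coincide with $A_{1}$ --- is false. If the three reflections of a point in the sides of a nondegenerate triangle coincided, its pedal triangle (whose vertices are the midpoints between the point and its reflections) would collapse to a single point, which cannot happen. What is actually true, and what makes the quoted reflection characterization of isogonal conjugation apply, is the following. The three sides of $\triangle D_{2}A_{2}B_{2}$ are the perpendicular bisectors of $A_{1}B_{1}$, $A_{1}C_{1}$ and $A_{1}D_{1}$: for instance $B_{2}$ and $D_{2}$ are the circumcenters of $(A_{1}B_{1}C_{1})$ and $(C_{1}D_{1}A_{1})$, so both are equidistant from $A_{1}$ and $C_{1}$, making $B_{2}D_{2}$ the perpendicular bisector of the diagonal $A_{1}C_{1}$ (you never identify this line correctly). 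Reflection in the perpendicular bisector of $A_{1}V$ swaps $A_{1}$ and $V$, so the image $X_{V}$ of $C_{2}$ under that reflection satisfies $|X_{V}A_{1}|=|C_{2}V|$. Taking $V=B_{1},C_{1},D_{1}$ and using that $C_{2}$ is the circumcenter of $(B_{1}C_{1}D_{1})$, all three distances equal $R_{3}$, so the three (generically distinct) reflections lie on a circle centered at $A_{1}$. Hence $A_{1}$ is the circumcenter of the reflection triangle, which by the quoted property is $\mbox{Iso}_{\triangle D_{2}A_{2}B_{2}}(C_{2})$.

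Your intermediate assertion that the reflections ``land on the three circles $o_{1},o_{2},o_{4}$'' is also unsubstantiated and not correct in general: the reflection of $C_{2}$ in $A_{2}B_{2}$ lies at distance $|C_{2}B_{2}|$ from $B_{2}$, which is not the radius of $o_{2}$. The overall strategy (apply the reflection characterization of the isogonal conjugate, then identify the reflected points using the circumcenter/perpendicular-bisector incidences) is the right one and is exactly what the paper's one-line proof intends, but as written your argument does not reach the conclusion: the bookkeeping you defer is precisely where the proof lives, and the version of it you do state is wrong.
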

The following theorem describes the basic properties of the iterative
process. 
\begin{thm}
\label{thm:Inverse process}Let $Q^{(1)}$ be a quadrilateral. Then
\begin{enumerate}
\item $Q^{(2)}$ degenerates to a point if and only if $Q^{(1)}$ is cyclic.
\item \label{enu:angles supplementary} If $Q^{(1)}$ is not cyclic, the
corresponding angles of the first and second generation quadrilaterals
are supplementary:
\[
\angle A_{1}+\angle A_{2}=\angle B_{1}+\angle B_{2}=\angle C_{1}+\angle C_{2}=\angle D_{1}+\angle D_{2}=\pi.
\]

\item \label{enu:similarity of generations}If $Q^{(1)}$ is not cyclic,
all odd generation quadrilaterals are similar to each other and all
the even generation quadrilaterals are similar to each other:
\begin{eqnarray*}
 & Q^{(1)}\sim Q^{(3)}\sim Q^{(5)}\sim\dots,\\
 & Q^{(2)}\sim Q^{(4)}\sim Q^{(6)}\sim\dots.
\end{eqnarray*}

\item All odd generation quadrilaterals are related to each other via spiral
similarities with respect to a common center.
\item All even generation quadrilaterals are also related to each other
via spiral similarities with respect to a common center.
\item The angle of rotation for each spiral similarity is $\pi$ (for a
convex quadrilateral) or a $0$ (for a concave quadrilateral). The
ratio of similarity is 
\begin{equation}
r=\frac{1}{4}(\cot\alpha+\cot\gamma)\cdot(\cot\beta+\cot\delta),\label{eq:ratio of similarity}
\end{equation}
where $\alpha=\angle A_{1}$, $\beta=\angle B_{1}$, $\gamma=\angle C_{1}$
and $\delta=\angle D_{1}$ are the angles of $Q^{(1)}$. 
\item The center of spiral similarities is the same for both the odd and
the even generations. 
\end{enumerate}
\end{thm}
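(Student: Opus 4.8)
The plan is to dispose of the two elementary items (1) and (2) by hand, then pass to complex coordinates to carry out one full iteration $Q^{(1)}\mapsto Q^{(3)}$ explicitly, and finally bootstrap from there to all generations using the fact that the perpendicular-bisector map commutes with similarities.

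For (1) the argument is short: if $Q^{(1)}$ is cyclic all four triad circles equal its circumcircle, so their centers coincide and $Q^{(2)}$ is a point; conversely, if $A_2=B_2=C_2=D_2=O$, then $O$ is equidistant from $\{D_1,A_1,B_1\}$, from $\{A_1,B_1,C_1\}$ and from $\{B_1,C_1,D_1\}$, hence from all four vertices, so $Q^{(1)}$ is cyclic. For (2): since $A_2$ and $B_2$ are circumcenters of two triangles each having $A_1B_1$ as an edge, they both lie on the perpendicular bisector of $A_1B_1$, so the side $A_2B_2$ of $Q^{(2)}$ is perpendicular to $A_1B_1$, and likewise all the way around; hence each angle of $Q^{(2)}$ equals the corresponding angle of $Q^{(1)}$ or its supplement, and a short orientation/configuration check (reconfirmed by the explicit formulas below) shows it is the supplement. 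The same remark also shows $A_2C_2$ lies on the perpendicular bisector of the diagonal $B_1D_1$, and $B_2D_2$ on that of $A_1C_1$.

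For the core items (3)--(7) I would place $Q^{(1)}$ in $\mathbb{C}$, oriented counterclockwise, with angles $\alpha,\beta,\gamma,\delta$ at $A_1,B_1,C_1,D_1$, and use the circumcenter formula: for a counterclockwise triangle $XYZ$ the circumcenter is $\frac{Y+Z}{2}+\frac{1}{2}\cot(\angle X)\,i(Z-Y)$. Writing $p=A_1+C_1$, $u=A_1-C_1$, $q=B_1+D_1$, $v=B_1-D_1$, this yields
\[
A_2=\frac12\bigl(q-\cot\alpha\cdot iv\bigr),\quad B_2=\frac12\bigl(p+\cot\beta\cdot iu\bigr),\quad C_2=\frac12\bigl(q+\cot\gamma\cdot iv\bigr),\quad D_2=\frac12\bigl(p-\cot\delta\cdot iu\bigr).
\]
Substituting these back into the same formula for $Q^{(2)}$ (using $\angle A_2=\pi-\alpha$, etc.\ from (2), together with the reversed orientation of $Q^{(2)}$ --- the two sign changes cancel, so the local formula keeps its shape) and simplifying, I expect the diagonal vectors to scale by the constant $r=\frac14(\cot\alpha+\cot\gamma)(\cot\beta+\cot\delta)$, namely $A_3-C_3=r(A_1-C_1)$ and $B_3-D_3=r(B_1-D_1)$, and moreover $A_3-rA_1=C_3-rC_1$ and $B_3-rB_1=D_3-rD_1$. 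The one step that is \emph{not} purely formal --- the main obstacle --- is to show these two common values are equal, i.e.\ $A_3-rA_1=B_3-rB_1$; this is exactly where one must use that $\alpha,\beta,\gamma,\delta$ really are the angles at the given vertices. I would prove it by normalizing $A_1=0$, $B_1=a$, $C_1=a-be^{-i\beta}$, $D_1=fe^{i\alpha}$ (with $a,b,c,f$ the side lengths) and invoking the polygon-closure relation $a-be^{-i\beta}+ce^{-i(\beta+\gamma)}-fe^{i\alpha}=0$, which turns the required equality into a trigonometric identity. Granting it, the common value is $(1-r)W$ for a well-defined point $W$, so
\[
A_3=rA_1+(1-r)W,\quad B_3=rB_1+(1-r)W,\quad C_3=rC_1+(1-r)W,\quad D_3=rD_1+(1-r)W;
\]
that is, $Q^{(3)}=g(Q^{(1)})$ where $g(z)=rz+(1-r)W$ is a direct similarity with center $W$ and (real) ratio $r$, whose rotation angle is $0$ if $r>0$ and $\pi$ if $r<0$. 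One checks $r<0$ for convex $Q^{(1)}$; for concave $Q^{(1)}$ the same computation with the modified orientation/angle bookkeeping gives $r\geq 1$. This already yields (3) and (6) for consecutive odd generations.

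To reach all generations and the common center, I would use that the perpendicular-bisector map $T$ commutes with every direct similarity $h$ of the plane: such an $h$ carries each triad circle of $Q$ to a triad circle of $h(Q)$ and circumcenters to circumcenters, so $T(h(Q))=h(T(Q))$. Applying the previous step to $Q^{(2)}$, whose angles are $\pi-\alpha,\dots,\pi-\delta$ --- so the product of the two cotangent-sums, hence $r$, is unchanged --- gives $Q^{(4)}=g'(Q^{(2)})$ for some direct similarity $g'$; but $Q^{(4)}=T(Q^{(3)})=T(g(Q^{(1)}))=g(T(Q^{(1)}))=g(Q^{(2)})$, so $g'$ agrees with $g$ on the four vertices of $Q^{(2)}$ and hence $g'=g$. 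Thus the center of $Q^{(2)}\to Q^{(4)}$ is again $W$, and iterating, $Q^{(2k+1)}=g^{\,k}(Q^{(1)})$ and $Q^{(2k+2)}=g^{\,k}(Q^{(2)})$: every pair of odd (resp.\ even) generations is related by a power of $g$, all with the same center $W$ and the same angle and ratio, which proves (4), (5), (7) and completes (3) and (6). When $|r|\neq 1$ the similarity $g$ has $W$ as its unique fixed point; the borderline cases $r=-1$ ($g$ a half-turn about $W$, each chain $2$-periodic) and $r=1$ ($g$ a nontrivial translation, $W$ on the line at infinity) are precisely the periodic instances of the statement.
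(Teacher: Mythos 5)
Your proposal is correct in substance but follows a genuinely different route from the paper. The paper's proof is synthetic and very short: items (1) and (2) are declared immediate; for (3)--(5) it observes that every side \emph{and} every diagonal of $Q^{(3)}$ is parallel to the corresponding side or diagonal of $Q^{(1)}$ (since, e.g., $A_2B_2$ is the perpendicular bisector of $A_1B_1$ and $A_2C_2$ that of $B_1D_1$), which forces the two quadrilaterals to be homothetic; and for (7) it identifies the centers of the odd and even chains via a spiral similarity relating the midpoint triangles $\triangle B_1M_1A_2$ and $\triangle B_3M_3A_4$. Notably the paper never actually derives the formula $r=\frac14(\cot\alpha+\cot\gamma)(\cot\beta+\cot\delta)$ (it cites Prasolov, Langr, Bennett, King and Shephard for it), whereas your circumcenter computation giving $A_3-C_3=r(A_1-C_1)$ and $B_3-D_3=r(B_1-D_1)$ proves it --- a genuine gain. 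Moreover, the one step you flag as the main obstacle, $A_3-rA_1=B_3-rB_1$, does not need to be ground out as a trigonometric identity: you already noted in item (2) that all sides and both diagonals of $Q^{(3)}$ are parallel to those of $Q^{(1)}$, and two quadrilaterals with all corresponding sides and diagonals parallel are homothetic (apply the parallel-sides criterion to $\triangle A_1B_1C_1$, $\triangle A_3B_3C_3$ and to $\triangle A_1B_1D_1$, $\triangle A_3B_3D_3$, which must share the homothety determined by the images of $A$ and $B$); combined with your diagonal computation this pins the ratio to $r$ and closes the step --- this is exactly the paper's shortcut. Your equivariance argument $T\circ h=h\circ T$ for showing the odd and even chains share the same similarity $g$, hence the same center, is clean and arguably tidier than the paper's midpoint construction. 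One small correction to a side remark: when $r=1$ the quadrilateral is orthocentric and $Q^{(3)}=Q^{(1)}$, so $g$ is the identity rather than a nontrivial translation (consistent with the paper's later observation that $W$ is then the point at infinity).
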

\begin{proof}
The first and second statements follow immediately from the definition
of the iterative process. To show that all odd generation quadrilaterals
are similar to each other and all even generation quadrilaterals similar
to each other, it is enough to notice that both the corresponding
sides and the corresponding diagonals of all odd (even) generation
quadrilaterals are pairwise parallel. 

Let $W_{1}\doteq A_{1}A_{3}\cap B_{1}B_{3}$ be the center of spiral
similarity taking $Q^{(1)}$ into $Q^{(3)}$. Similarly, let $W_{2}$
be the center of spiral similarity taking $Q^{(2)}$ into $Q^{(4)}$.
Denote the midpoints of segments $A_{1}B_{1}$ and $A_{3}B_{3}$ by
$M_{1}$ and $M_{3}$. (See fig. \ref{fig:W1 is W2}). To show that
$W_{1}$ and $W_{2}$ coincide, notice that $\triangle B_{1}M_{1}A_{2}\sim\triangle B_{3}M_{3}A_{4}$.
Since the corresponding sides of these triangles are parallel, they
are related by a spiral similarity. Since $B_{1}B_{3}\cap M_{1}M_{3}=W_{1}$
and $M_{1}M_{3}\cap B_{2}B_{4}=W_{2}$, it follows that $W_{1}=W_{2}$.
Let now $W_{3}$ be the center of spiral similarity that takes $Q^{(3)}$
into $Q^{(5)}$. By the same reasoning, $W_{2}=W_{3}$, which implies
that $W_{1}=W_{3}$. Continuing by induction, we conclude that the
center of spiral similarity for any pair of odd generation quadrilaterals
coincides with that for any pair of even generation quadrilaterals.
We denote this point by $W$.
\end{proof}
\begin{figure}[H]
\includegraphics[scale=0.15]{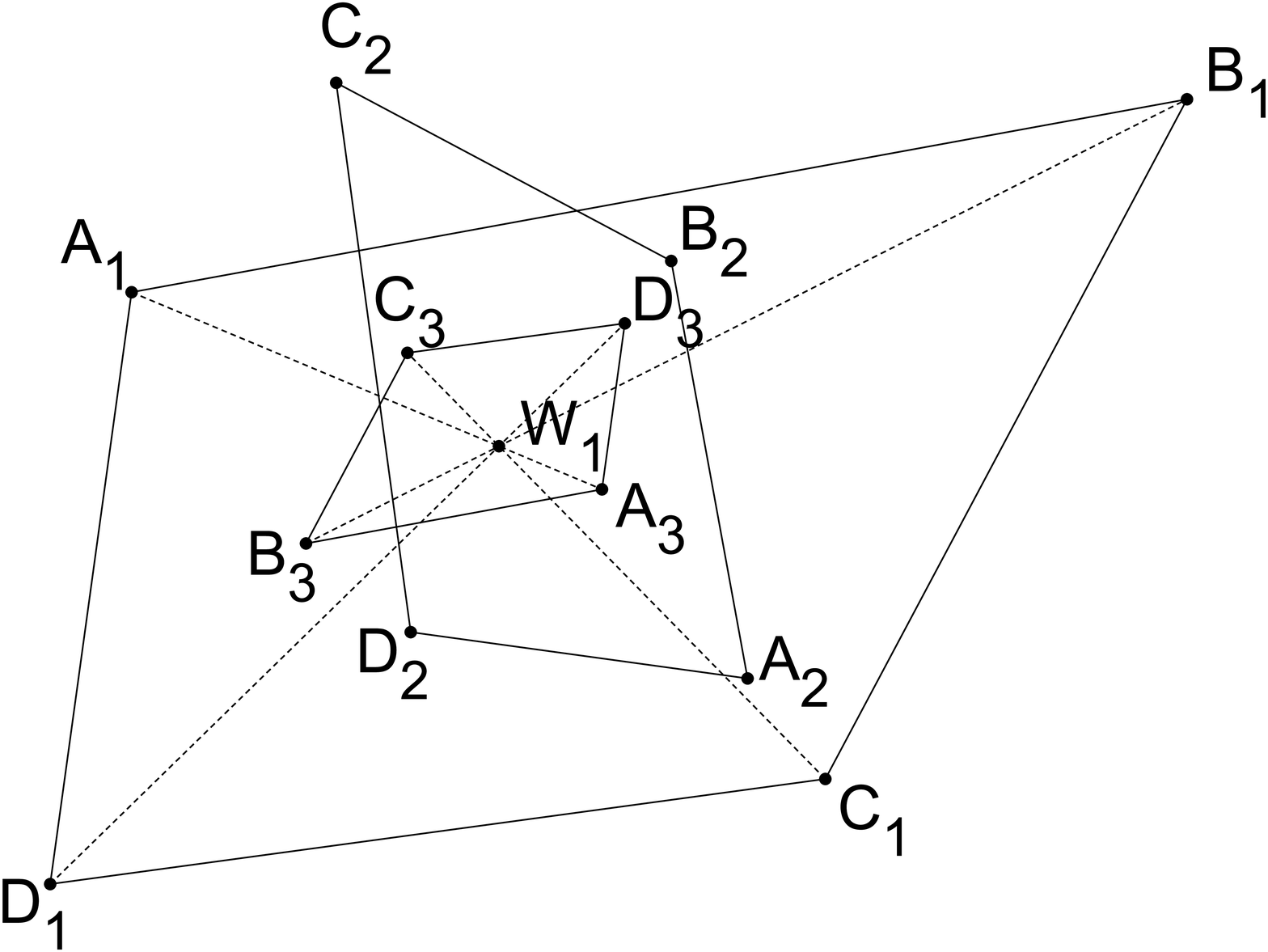}\includegraphics[scale=0.15]{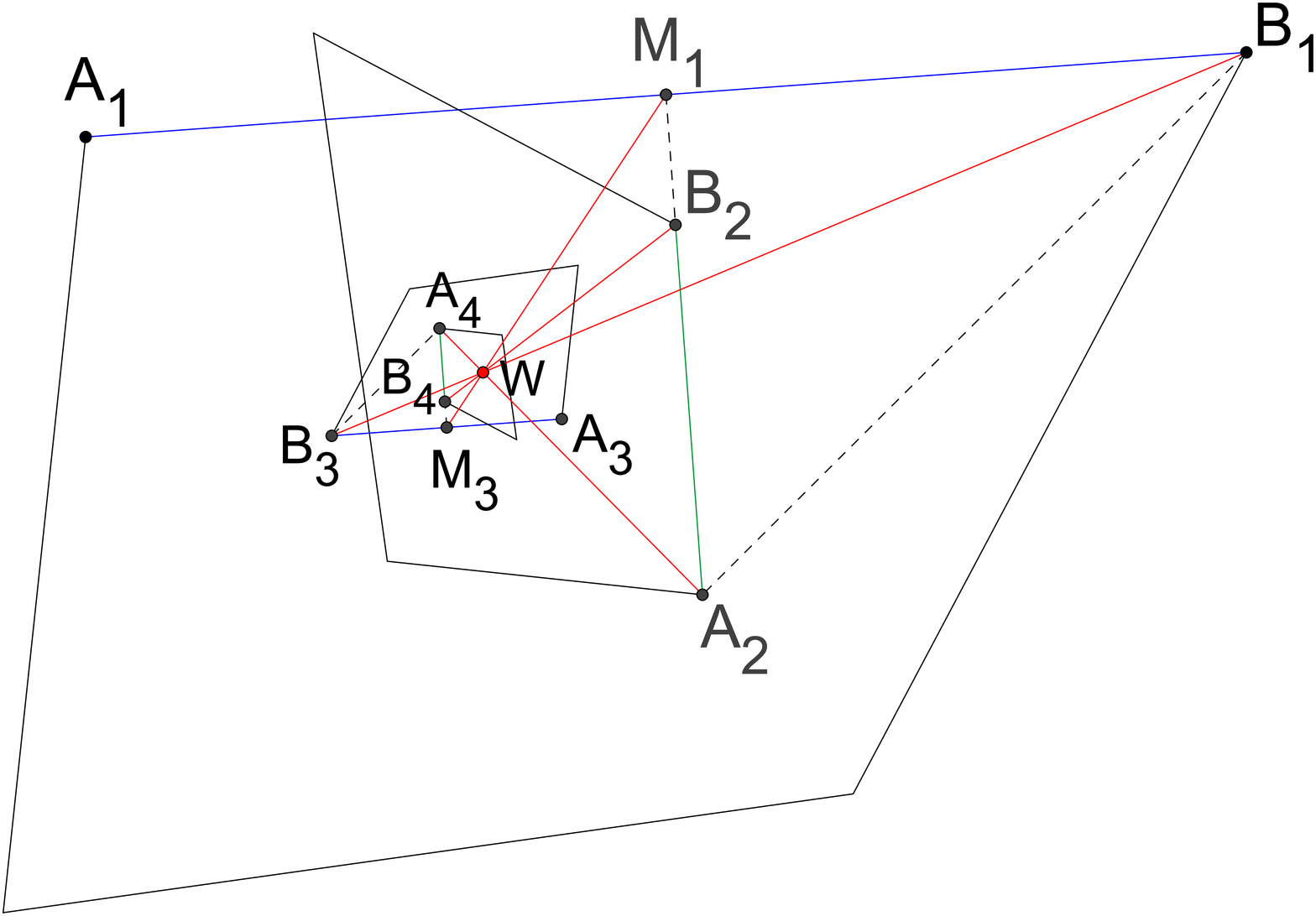}

\caption{\label{fig:W1 is W2}$W$ as the center of spiral similarities.}
\end{figure}

${}$Properties (\ref{enu:angles supplementary}) and (\ref{enu:similarity of generations})
of the theorem above imply the following
\begin{cor}
\label{cor:trapezoid case}The even and odd generation quadrilaterals
are similar to each other if and only if $Q^{(1)}$ is a trapezoid.
\end{cor}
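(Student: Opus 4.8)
The plan is to reduce the statement to a comparison of $Q^{(1)}$ with $Q^{(2)}$ alone, and then to settle that comparison by an angle argument in one direction and a short coordinate computation in the other. By part (\ref{enu:similarity of generations}) of Theorem \ref{thm:Inverse process}, every odd-generation quadrilateral is similar to $Q^{(1)}$ and every even-generation quadrilateral is similar to $Q^{(2)}$; hence the $Q^{(k)}$ are all mutually similar if and only if $Q^{(1)}\sim Q^{(2)}$. So, under the standing assumption that $Q^{(1)}$ is noncyclic (which is what keeps $Q^{(2)}$ nondegenerate), it suffices to show that $Q^{(1)}\sim Q^{(2)}$ exactly when $Q^{(1)}$ is a trapezoid.

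For necessity I would argue via angles. Similar quadrilaterals have the same multiset of interior angles, and by part (\ref{enu:angles supplementary}) of Theorem \ref{thm:Inverse process} the interior angles of $Q^{(2)}$ are $\pi-\angle A_{1},\,\pi-\angle B_{1},\,\pi-\angle C_{1},\,\pi-\angle D_{1}$. Thus $Q^{(1)}\sim Q^{(2)}$ forces the multiset $\{\angle A_{1},\angle B_{1},\angle C_{1},\angle D_{1}\}$ to be invariant under $x\mapsto\pi-x$, so it contains some value $v$ together with its supplement $\pi-v$. The two vertices of $Q^{(1)}$ carrying those two values are, among the four vertices of a quadrilateral, either a pair of opposite vertices or a pair of adjacent ones. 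In the opposite case two opposite angles sum to $\pi$, so $Q^{(1)}$ is cyclic, contrary to hypothesis. In the adjacent case, say $\angle A_{1}+\angle B_{1}=\pi$, the co-interior angle relation on the transversal $A_{1}B_{1}$ gives $A_{1}D_{1}\parallel B_{1}C_{1}$, so $Q^{(1)}$ is a trapezoid.

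For sufficiency, suppose $A_{1}B_{1}\parallel C_{1}D_{1}$. The perpendicular bisectors of $A_{1}B_{1}$ and of $C_{1}D_{1}$ are then parallel lines; since $A_{2},B_{2}$ lie on the first and $C_{2},D_{2}$ on the second, $A_{2}B_{2}\parallel C_{2}D_{2}$, so $Q^{(2)}$ is again a trapezoid, nondegenerate because a noncyclic trapezoid is not isosceles. To exhibit the similarity I would place $A_{1}B_{1}$ on the real axis, treat the four vertices as complex numbers, compute the circumcenters $A_{2},B_{2},C_{2},D_{2}$ in closed form, and then check that the orientation-preserving similarity $z\mapsto\lambda z+\mu$ determined by $A_{1}\mapsto D_{2}$ and $B_{1}\mapsto C_{2}$ automatically sends $C_{1}\mapsto B_{2}$ and $D_{1}\mapsto A_{2}$ as well; this yields $Q^{(1)}\sim Q^{(2)}$ with vertex correspondence $(A_{1},B_{1},C_{1},D_{1})\leftrightarrow(D_{2},C_{2},B_{2},A_{2})$. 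I expect this verification to be the one genuine obstacle: one has to evaluate the four circumcenters by hand and observe that a single scalar factor, proportional to $\mathrm{Re}\,C_{1}+\mathrm{Re}\,D_{1}-|A_{1}B_{1}|$ (nonzero precisely because $Q^{(1)}$ is not an isosceles trapezoid), governs all the displacements in play; granting that, the ratio $|A_{2}B_{2}|:|C_{2}D_{2}|$ comes out equal to $|C_{1}D_{1}|:|A_{1}B_{1}|$ and the two remaining incidences $C_{1}\mapsto B_{2}$, $D_{1}\mapsto A_{2}$ reduce to one algebraic identity. The necessity direction, by contrast, uses nothing beyond parts (\ref{enu:angles supplementary}) and (\ref{enu:similarity of generations}) of Theorem \ref{thm:Inverse process}.
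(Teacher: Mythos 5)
Your reduction to the single comparison $Q^{(1)}\sim Q^{(2)}$ via Theorem \ref{thm:Inverse process}(\ref{enu:similarity of generations}) is exactly what the paper intends: its entire proof of the corollary is the remark that it follows from parts (\ref{enu:angles supplementary}) and (\ref{enu:similarity of generations}) of that theorem. Your necessity argument is a careful and correct elaboration of that remark (one small point worth a sentence: when the supplementary value is $\pi/2$ you must note that right angles then occur an even number of times, so two \emph{distinct} vertices carrying supplementary angles always exist in the noncyclic case; and the co-interior-angle step tacitly uses that the relevant configuration is convex, which is harmless since a trapezoid is convex and the opposite-vertex alternative is disposed of by cyclicity).

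The genuine gap is in the sufficiency direction, and you have correctly identified where it is: equal angle multisets do not imply similarity of quadrilaterals, so something beyond Theorem \ref{thm:Inverse process}(\ref{enu:angles supplementary}) is needed, and you announce a coordinate computation without performing it. For the record, the computation does close: with $A_{1}=(0,0)$, $B_{1}=(b,0)$, $C_{1}=(c_{1},h)$, $D_{1}=(d_{1},h)$ one finds that all four ratios $|D_{2}C_{2}|/|A_{1}B_{1}|$, $|C_{2}B_{2}|/|B_{1}C_{1}|$, $|B_{2}A_{2}|/|C_{1}D_{1}|$, $|A_{2}D_{2}|/|D_{1}A_{1}|$ equal the common value $|c_{1}+d_{1}-b|/(2h)$, which vanishes precisely when the trapezoid is isosceles, i.e.\ cyclic; combined with the angle correspondence $\angle D_{2}=\pi-\delta=\alpha$, etc., this gives $Q^{(1)}\sim Q^{(2)}$ under $(A_{1},B_{1},C_{1},D_{1})\mapsto(D_{2},C_{2},B_{2},A_{2})$ as you predicted. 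So your plan is sound and your guesses about the governing factor and the side-ratio identity are right, but as written the sufficiency half is a sketch rather than a proof; the paper, for its part, does not supply this verification either.
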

${}$

\textcolor{black}{The ratio of similarity $r=r(\alpha,\beta,\gamma,\delta)$
takes values in $(-\infty,0]\cup[1,\infty)$ and characterizes the
shape of $Q^{(1)}$ in the following way:}
\begin{enumerate}
\item $r\leq0$ if and only if $Q^{(1)}$ is convex. Moreover, $r=0$ if
and only if $Q^{(1)}$ is cyclic.
\item $r\geq1$ if and only if $Q^{(1)}$ is concave. Moreover, $r=1$ if
and only if $Q^{(1)}$ is \emph{orthocentric} (that is, each of the
vertices is the orthocenter of the triangle formed by the remaining
three vertices. Alternatively, an orthocentric quadrilateral is characterized
by being a concave quadrilateral for which the two opposite acute
angles are equal). 
\end{enumerate}
For convex quadrilaterals, $r$ can be viewed as a measure of how
noncyclic the original quadrilateral is. Recall that since the opposite
angles of a cyclic quadrilateral add up to $\pi$, the difference
\begin{equation}
|(\alpha+\gamma)-\pi|=|(\beta+\delta)-\pi|\label{eq:sum of angles non-cyclicity}
\end{equation}
can be taken as the simplest measure of noncyclicity. This measure,
however, treats two quadrilaterals with equal sums of opposite angles
as equally noncyclic. The ratio $r$ provides a refined measure of
noncyclicity. For example, for a fixed sum of opposite angles, $\alpha+\gamma=C$,
$\beta+\delta=2\pi-C$, where $C\in(0,2\pi)$, the convex quadrilateral
with the smallest $|r|$ is the parallelogram with $\alpha=\gamma=\frac{C}{2}$,
$\beta=\delta$. 

Similarly, for concave quadrilaterals, $r$ measures how different
the quadrilateral is from being orthocentric. 

${}$

Since the angles between diagonals are the same for all generations,
it follows that the ratio is the same for all pairs of consecutive
generations: 
\[
\frac{\mbox{Area}(Q^{(n)})}{\mbox{Area}(Q^{(n-1)})}=|r|.
\]
Assuming the quadrilateral is noncyclic, there are the following three
possibilities:
\begin{enumerate}
\item When $|r|<1$ (which can only happen for convex quadrilaterals), the
quadrilaterals in the iterative process converge to $W$.
\item When $|r|>1$, the quadrilaterals in the inverse iterative process
converge to $W$.
\item When $|r|=1$, all the quadrilaterals have the same area. The iterative
process is periodic with period $4$ for all quadrilaterals with $|r|=1$,
except for the following two special cases. If $Q^{(1)}$ is either
a parallelogram with angle $\frac{\pi}{4}$ (so that $r=-1$) or forms
an orthocentric system (so that $r=1$), we have $Q^{(3)}=Q^{(1)}$,
$Q^{(4)}=Q^{(2)}$, and the iterative process is periodic with period
$2$. 
\end{enumerate}
${}$

By setting $r=0$ in formula (\ref{eq:ratio of similarity}), we obtain
the familiar relations between the sides and diagonals of a cyclic
quadrilateral $ABCD$:
\begin{eqnarray}
|AC|\cdot|BD| & = & |AB|\cdot|CD|+|BC|\cdot|AD|,\qquad\mbox{(Ptolemy's theorem)}\label{eq:Ptolemy-1}\\
\frac{|AC|}{|BD|} & = & \frac{|AB|\cdot|AD|+|CB|\cdot|CD|}{|BA|\cdot|BC|+|DA|\cdot|DC|.}\label{eq:Ptolemy-2}
\end{eqnarray}

Since the vertices of the next generation depend only on the vertices
of the previous one (but not on the way the vertices are connected),
one can see that $W$ and $r$ for the (self-intersecting) quadrilaterals
$ACBD$ and $ACDB$ coincide with those for $ABCD$. This observation
allows us to prove the following
\begin{cor}
\label{cor:cotangent identity}The angles between the sides and the
diagonals of a quadrilateral satisfy the following identities:
\begin{eqnarray*}
(\cot\alpha+\cot\gamma)\cdot(\cot\beta+\cot\delta) & = & (\cot\alpha_{1}-\cot\beta_{2})\cdot(\cot\delta_{2}-\cot\gamma_{1}),\\
(\cot\alpha+\cot\gamma)\cdot(\cot\beta+\cot\delta) & = & (\cot\delta_{1}-\cot\alpha_{2})\cdot(\cot\beta_{1}-\cot\gamma_{2})
\end{eqnarray*}
where $\alpha_{i},\beta_{i},\gamma_{i},\delta_{i}$, $i=1,2$ are
the directed angles formed between sides and diagonals of a quadrilateral
(see Figure \ref{fig:angles between sides diagonals}).
\end{cor}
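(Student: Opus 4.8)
The plan is to deduce both identities from formula (\ref{eq:ratio of similarity}) by applying it to the \emph{same} four points joined up in each of the three possible cyclic orders. As observed just before the statement, the four triad circles $(BCD),(CDA),(DAB),(ABC)$, their circumcenters, and hence the entire sequence of iterates (up to relabeling) together with the ratio $r$, depend only on the set $\{A,B,C,D\}$ and not on the order in which the points are connected. Therefore the number
\[
4r=(\cot\alpha+\cot\gamma)(\cot\beta+\cot\delta)
\]
is the same for the quadrilaterals $ABCD$, $ACBD$ and $ACDB$, provided the angles entering (\ref{eq:ratio of similarity}) are read as directed angles between lines (the natural setting, since $\cot$ has period $\pi$), so that the formula continues to apply to the two self-intersecting re-joinings.

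Next I would set up the dictionary between the angles of the re-joined quadrilaterals and the angles $\alpha_i,\beta_i,\gamma_i,\delta_i$ into which a diagonal of $ABCD$ cuts the vertex angles (notation of Figure \ref{fig:angles between sides diagonals}): writing $\alpha_1=\angle(DA,AC)$, $\alpha_2=\angle(AC,AB)$ at $A$, and similarly $\beta_1,\beta_2$ at $B$, $\gamma_1,\gamma_2$ at $C$, $\delta_1,\delta_2$ at $D$. In $ACBD$ the diagonals $AC,BD$ of $ABCD$ play the role of sides while the sides $AB,CD$ become diagonals; reading the vertex angle of $ACBD$ at $A,C,B,D$ gives, respectively, $\alpha_1,\gamma_1,\beta_2,\delta_2$, with the two pairs of opposite vertices being $\{A,B\}$ and $\{C,D\}$. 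Substituting into (\ref{eq:ratio of similarity}) and passing from the interior angles of the crossed quadrilateral to the directed side--diagonal angles of $ABCD$ (each such passage replaces an angle $\theta$ by its supplement and flips the sign of $\cot\theta$) produces $4r=(\cot\alpha_1-\cot\beta_2)(\cot\delta_2-\cot\gamma_1)$. The analogous computation for $ACDB$, whose vertex angles at $A,C,D,B$ are $\alpha_2,\gamma_2,\delta_1,\beta_1$ with opposite pairs $\{A,D\}$ and $\{C,B\}$, gives $4r=(\cot\delta_1-\cot\alpha_2)(\cot\beta_1-\cot\gamma_2)$. Equating each with $4r=(\cot\alpha+\cot\gamma)(\cot\beta+\cot\delta)$ for $ABCD$ yields the two stated identities.

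The one genuinely delicate step is the sign accounting in the previous paragraph: one must fix an orientation convention for the directed angles and then verify, vertex by vertex for each of $ACBD$ and $ACDB$, exactly when the interior angle of the crossed quadrilateral equals the corresponding side--diagonal angle of $ABCD$ and when it equals its supplement, so that the four minus signs appear in the right places. I expect this bookkeeping to be the main obstacle; everything else is the mechanical substitution into (\ref{eq:ratio of similarity}) described above. As a consistency check (and an alternative route that sidesteps the crossed-quadrilateral language), once the dictionary of angles is in hand the two identities should also follow purely algebraically from the cotangent addition formula together with the relations among the $\alpha_i,\beta_i,\gamma_i,\delta_i$ coming from the angle sums of the four triangles into which the diagonals divide $ABCD$ (for instance $\alpha_1+\delta_2=\beta_2+\gamma_1$ and $\alpha_2+\beta_1=\gamma_2+\delta_1$, together with $\alpha=\alpha_1+\alpha_2$, $\beta=\beta_1+\beta_2$, $\gamma=\gamma_1+\gamma_2$, $\delta=\delta_1+\delta_2$); but the re-joining argument is shorter and more conceptual.
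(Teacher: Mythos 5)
Your proposal is correct and follows essentially the same route as the paper: the paper likewise observes that $r$ depends only on the four points, reads off the directed angles of the crossed quadrilaterals $ACBD$ and $ACDB$ as $-\alpha_{1},\beta_{2},\gamma_{1},-\delta_{2}$ and $\alpha_{2},\beta_{1},-\gamma_{2},-\delta_{1}$, and substitutes into formula (\ref{eq:ratio of similarity}). The sign bookkeeping you flag as the delicate step is exactly what the paper compresses into that list of signed angles.
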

\begin{figure}
\begin{centering}
\includegraphics[scale=0.14]{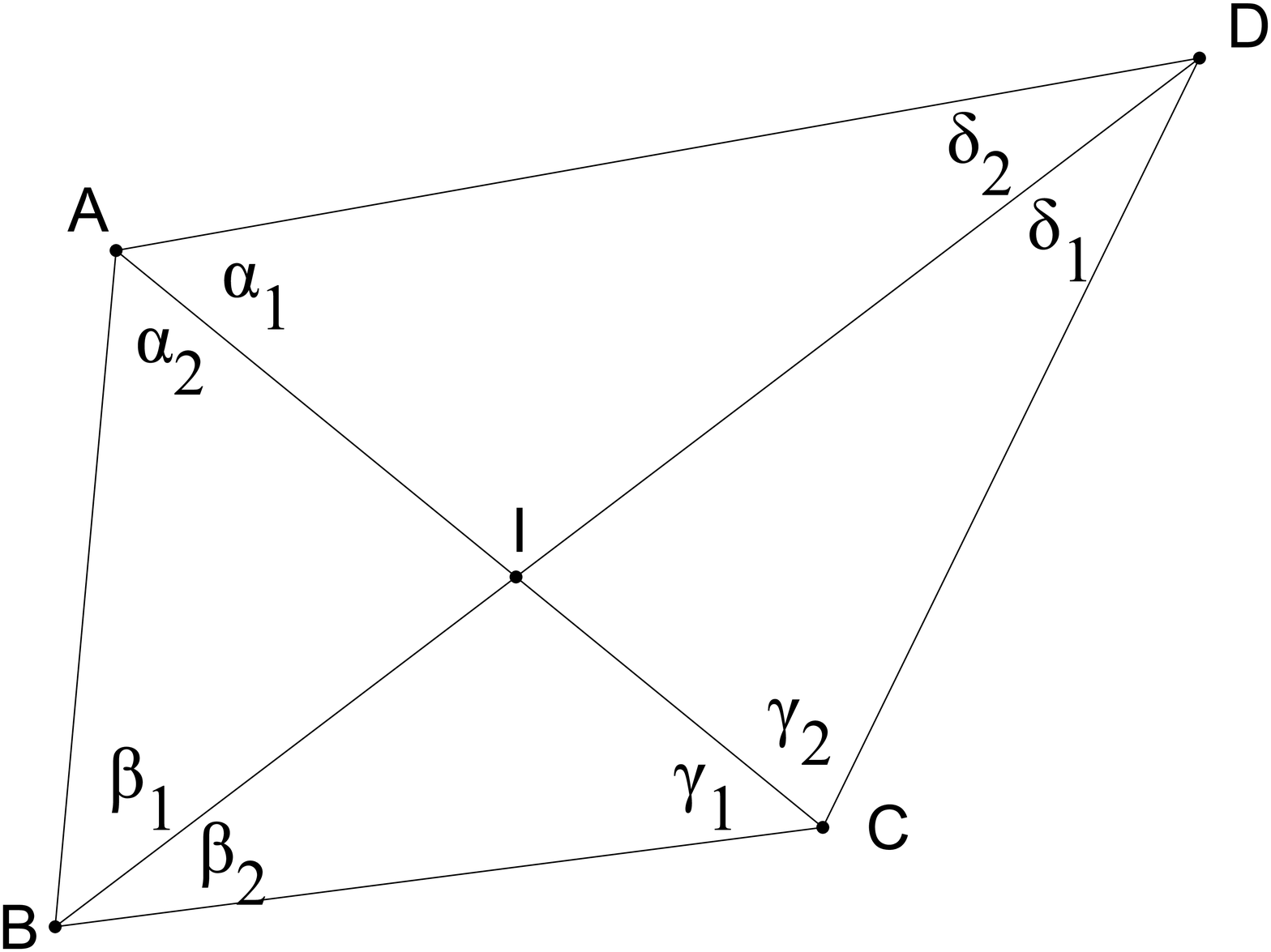}
\par\end{centering}

\caption{\label{fig:angles between sides diagonals}The angles between the
sides and diagonals of a quadrilateral.}
\end{figure}

\begin{proof}
Since the (directed) angles of $ACBD$ are $-\alpha_{1},\beta_{2},\gamma_{1},-\delta_{2}$
and the directed angles of $ACDB$ are $\alpha_{2},\beta_{1},-\gamma_{2},-\delta_{1}$,
the identities follow from formula (\ref{eq:ratio of similarity})
for the ratio of similarity. 
\end{proof}

\section{Properties of the Center of Spiral Similarity\label{sec: W}}

We will show that $W$, defined as the limit point of the iterated
perpendicular bisectors construction in the case that $|r|<1$ (or
of its reverse in the case that $|r|>1$), is the common center of
all spiral similarities taking any of the triad circles into another
triad circle in the iterative process. 

First, we will prove that any of the triad circles of the first generation
quadrilateral can be taken into another triad circle of the first
generation by a spiral similarity centered at $W$ (Theorem \ref{thm:W on CSes}).
This result allows us to view $W$ as a generalization of the circumcenter
for a noncyclic quadrilateral (Corollary \ref{cor:Angle addition for W}
and Corollary \ref{cor:feetCircles}), to prove its isoptic (Theorem
\ref{thm: Isoptic property}), isodynamic (Corollary \ref{cor:Isodynamic property})
and inversive (Theorem \ref{thm:Inversive Property of W}) properties,
as well as to establish some other results. We then prove several
statements that allow us to conclude (see Theorem \ref{thm: W on all CSs})
that $W$ serves as the center of spiral similarities for any pair
of triad circles of any two generations. 

Several objects associated to a configuration of two circles on the
plane will play a major role in establishing properties of $W$. We
will start by recalling the definitions and basic constructions related
to these objects.

\subsection{Preliminaries: circle of similitude, mid-circles and the radical
axis of two circles.\label{sub:CS, MS, RA.}}

Let $o_{1}$ and $o_{2}$ be two (intersecting%
\footnote{most of the constructions remain valid for non-intersecting circles.
However, they sometimes have to be formulated in different terms.
Since we will only deal with intersecting circles, we will restrict
our attention to this case. %
}) circles on the plane with centers $O_{1}$ and $O_{2}$ and radii
$R_{1}$ and $R_{2}$ respectively. Let $A$ and $B$ be the points
of intersection of the two circles. There are several geometric objects
associated to this configuration (see Figure  \ref{fig:CS MC RA}):
\begin{enumerate}
\item The \emph{circle of similitude} $CS(o_{1},o_{2})$ is the set of points
$P$ on the plane such that the ratio of their distances to the centers
of the circles is equal to the ratio of the radii of the circles:
\[
\frac{|PO_{1}|}{|PO_{2}|}=\frac{R_{1}}{R_{2}}.
\]
In other words, $CS(o_{1},o_{2})$ is the Apollonian circle determined
by points $O_{1}$, $O_{2}$ and ratio $R_{1}/R_{2}$.
\item The \emph{radical axis }$RA(o_{1},o_{2})$ can be defined as the line
through the points of intersection. 
\item The two \emph{mid-circles }(sometimes also called the \emph{circles
of antisimilitude}) $MC_{1}(o_{1},o_{2})$ and $MC_{2}(o_{1},o_{2})$
are the circles that invert $o_{1}$ into $o_{2}$, and vice versa:
\[
\mbox{Inv}_{MC_{i}(o_{1},o_{2})}(o_{1})=o_{2},\qquad i=1,2.
\]

\end{enumerate}
\begin{figure}
\includegraphics[scale=0.3]{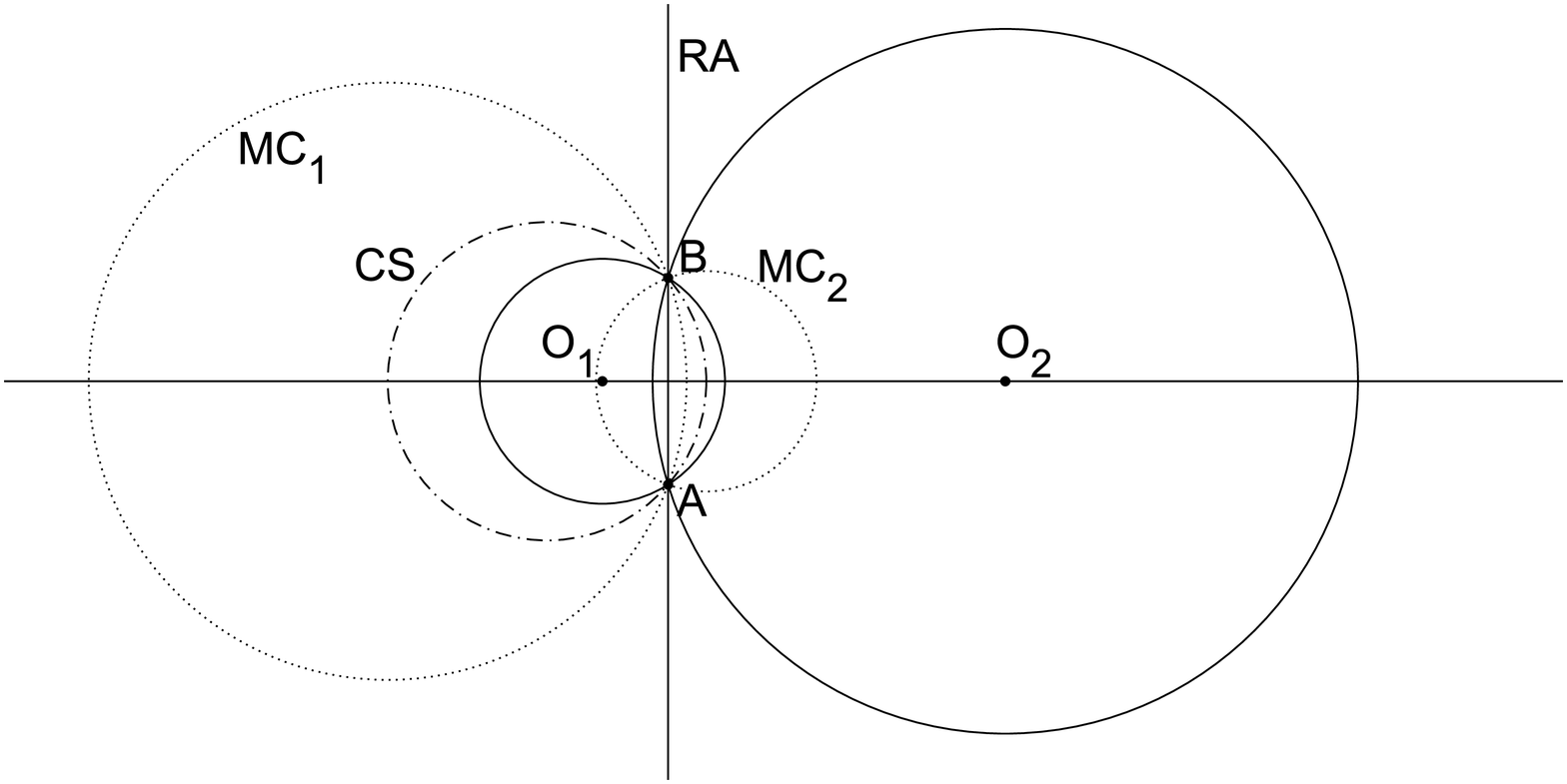}\caption{\label{fig:CS MC RA}Circle of similitude, mid-circles and radical
axis. }
\end{figure}

Here are several important properties of these objects (see \cite{Johnson-book}
and \cite{Coxeter} for more details):
\begin{enumerate}
\item $CS(o_{1},o_{2})$ is the locus of centers of spiral similarities
taking $o_{1}$ into $o_{2}$. For any $E\in CS(o_{1},o_{2})$, there
is a spiral similarity centered at $E$ that takes $o_{1}$ into $o_{2}$.
The ratio of similarity is $R_{2}/R_{1}$ and the angle of rotation
is $\angle O_{1}EO_{2}$. 
\item \label{enu:CS Inversion exchanges centers}Inversion with respect
to $CS(o_{1},o_{2})$ takes centers of $o_{1}$ and $o_{2}$ into
each other:
\[
\mbox{Inv}_{CS(o_{1},o_{2})}(O_{1})=O_{2}.
\]

\item Inversion with respect to any of the mid-circles exchanges the circle
of similitude and the radical axis:
\[
\mbox{Inv}_{MC_{i}(o_{1},o_{2})}(CS(o_{1},o_{2}))=RA(o_{1},o_{2}),\qquad i=1,2.
\]

\item The radical axis is the locus of centers of all circles $k$ that
are orthogonal to both $o_{1}$ and $o_{2}$. 
\item \label{enu:CS <-> RA}For any $P\in CS(o_{1},o_{2})$, inversion in
a circle centered at $P$ takes the circle of similitude of the original
circles into the radical axis of the images, and the radical axis
of the original circles into the circle of similitude of the images:
\[
CS(o_{1},o_{2})'=RA(o_{1}',o_{2}'),
\]
\[
RA(o_{1},o_{2})'=CS(o_{1}',o_{2}').
\]
Here $'$ denotes the image of an object under the inversion in a
circle centered at $P\in CS(o_{1},o_{2})$. 
\item Let $K,L,M$ be points on the circles $o_{1},o_{2},CS(o_{1},o_{2})$
respectively. Then
\begin{equation}
\angle AMB=\angle AKB+\angle ALB,\label{eq:angle addition for CS}
\end{equation}
where the angles are taken in the sense of directed angles. 
\item \label{enu:RA and two chords}Let $A_{1}B_{1}$ be a chord of a circle
$k_{1}$ and $A_{2}B_{2}$ be a chord of a circle $k_{2}$. Then $A_{1},B_{1},A_{2},B_{2}$
are on a circle $o$ if and only if $A_{1}B_{1}\cap A_{2}B_{2}\in RA(k_{1},k_{2}).$
\end{enumerate}
It is also useful to recall the construction of the center of a spiral
similarity given the images of two points. Suppose that $A$ and $B$
are transformed into $A'$ and $B'$ respectively. Let $P=AA'\cap BB'$.
The center $O$ of the spiral similarity can be found as the intersection
$O=(ABP)\cap(A'B'P)$, where here and below $(ABP)$ stands for the
circle going through $A,B,P$.

\begin{flushleft}
We will call point $P$ in this construction the \emph{joint point
}associated to two given points $A,B$ and their images $A',B'$ under
spiral similarity. 
\par\end{flushleft}

There is another spiral similarity associated to the same configuration
of points. Let $P'=AB\cap A'B'$ be the joint point for the spiral
similarity taking $A$ and $A'$ into $B$ and $B'$ respectively.
A simple geometric argument shows that the center of this spiral similarity,
determined as the intersection of the circles $(AA'P')\cap(BB'P')$,
coincides with $O$. We will call such a pair of spiral similarities
centered at the same point \emph{associated spiral similarities}. 

Let $H_{i,j}^{W}$ be the spiral similarity centered at $W$ that
takes $o_{i}$ into $o_{j}$. The following Lemma will be useful when
studying properties of the limit point of the iterative process (or
of its inverse):
\begin{lem}
\label{lem:3 pts on CS}\textcolor{black}{Let $o_{1}$ and $o_{2}$
be two circles centered at $O_{1}$ and $O_{2}$ respectively and
intersecting at points $A$ and $B$. Let $W,R,S\in CS(o_{1},o_{2})$
be points on the circle of similitude such that $R$ and $S$ are
symmetric to each other with respect to the line of centers, $O_{1}O_{2}$.
Then the joint points corresponding to taking $O_{1}\to O_{2}$, $R\to R_{1,2}\doteq H_{1,2}^{W}(R)$
by $H_{1,2}^{W}$ and taking $O_{2}\to O_{1}$, $S\to S_{2,1}\doteq H_{2,1}^{W}(S)$
by $H_{2,1}^{W}$ coincide. The common joint point lies on $O_{1}O_{2}$. }
\end{lem}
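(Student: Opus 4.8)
The plan is to prove the lemma by reducing the desired coincidence of joint points to a single directed-angle identity on the circle of similitude. Write $P_{1}=O_{1}O_{2}\cap RR_{1,2}$ and $P_{2}=O_{1}O_{2}\cap SS_{2,1}$ for the two joint points; both lie on the line of centers by their very definition, so the only thing to prove is $P_{1}=P_{2}$. By the construction of the center of a spiral similarity recalled just above the statement, $W$ is the second point of intersection of the circle $(O_{1}RP_{1})$ with the circle $(O_{2}R_{1,2}P_{1})$; in particular $W,O_{1},R,P_{1}$ are concyclic, and likewise $W,O_{2},S,P_{2}$ are concyclic. Reading off the inscribed-angle relation from the first quadruple and using that $P_{1}$ lies on the line $O_{1}O_{2}$ gives
\[
\angle(WP_{1},\,O_{1}O_{2})=\angle(RW,\,RO_{1}),
\]
and symmetrically $\angle(WP_{2},\,O_{1}O_{2})=\angle(SW,\,SO_{2})$. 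Hence it suffices to establish the identity $\angle(RW,RO_{1})=\angle(SW,SO_{2})$: once this is known, $WP_{1}$ and $WP_{2}$ are both the line through $W$ forming that fixed angle with $O_{1}O_{2}$, so they coincide, and their intersection with $O_{1}O_{2}$ is then the single point $P_{1}=P_{2}$, lying on $O_{1}O_{2}$ as asserted. Throughout I assume the configuration is nondegenerate, e.g. $R_{1}\neq R_{2}$ so that $CS(o_{1},o_{2})$ is a genuine circle and $W$ does not lie on $O_{1}O_{2}$; this is the only case used in the sequel.

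To prove the remaining identity I would bring in one of the two points $I$ in which $CS(o_{1},o_{2})$ meets the line $O_{1}O_{2}$ (a center of similitude of $o_{1}$ and $o_{2}$). Since $CS(o_{1},o_{2})$ is the Apollonius circle of $O_{1},O_{2}$ for the ratio $R_{1}:R_{2}$, the classical fact is that for every point $X$ on it the line $XI$ is an angle bisector of $\angle O_{1}XO_{2}$; in particular $RI$ bisects $\angle O_{1}RO_{2}$ and $SI$ bisects $\angle O_{1}SO_{2}$, and $W,R,S,I$ all lie on $CS(o_{1},o_{2})$. Splitting the two angles through the chord $WI$,
\[
\angle(RW,RO_{1})=\angle(RW,RI)+\angle(RI,RO_{1}),\qquad \angle(SW,SO_{2})=\angle(SW,SI)+\angle(SI,SO_{2}),
\]
the first summands agree by the inscribed-angle theorem in $CS(o_{1},o_{2})$ (both subtend the chord $WI$). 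For the second summands I use the reflection $\sigma$ in the line $O_{1}O_{2}$: it fixes $O_{1},O_{2},I$, interchanges $R$ and $S$, and reverses directed angles, so $\angle(SI,SO_{1})=-\angle(RI,RO_{1})$, i.e. $\angle(SO_{1},SI)=\angle(RI,RO_{1})$; combined with the bisector relation $\angle(SO_{1},SI)=\angle(SI,SO_{2})$ for $S$ this gives $\angle(SI,SO_{2})=\angle(RI,RO_{1})$. Adding the two displayed decompositions then yields $\angle(RW,RO_{1})=\angle(SW,SO_{2})$, completing the argument.

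I expect the main friction to be the bookkeeping of directed angles — in particular the interplay between the orientation-reversing reflection $\sigma$ and the angle-bisector relations, where a sign error would flip the conclusion — together with checking that the degenerate possibilities ($R_{1}=R_{2}$; $W$, $R$, or $S$ lying on the line of centers; $P_{i}$ collapsing onto $O_{i}$) either do not arise in the intended applications or can be disposed of separately.
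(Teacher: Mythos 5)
Your proof is correct, but it takes a genuinely different route from the paper's. The paper inverts the whole configuration in a mid-circle of $o_{1},o_{2}$: this sends $CS(o_{1},o_{2})$ to the radical axis $AB$, makes the images of $O_{1},O_{2}$ and of $R,S$ symmetric about $I=AB\cap O_{1}O_{2}$, and then the claim that $(WO_{1}R)\cap O_{1}O_{2}$ lies on $(WO_{2}S)$ drops out of a one-line power-of-a-point computation at $I$ ($|IP|\cdot|IO_{1}|=|IW|\cdot|IR|$ together with $|IO_{1}|=|IO_{2}|$, $|IR|=|IS|$). You instead stay in the original figure and reduce everything to the directed-angle identity $\angle(RW,RO_{1})=\angle(SW,SO_{2})$, which you prove by splitting along the chord to an intersection point $I$ of $CS(o_{1},o_{2})$ with the line of centers, using the inscribed-angle theorem in $CS$, the Apollonius/angle-bisector property of $I$, and the reflection in $O_{1}O_{2}$. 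Both arguments exploit the same underlying symmetry of $R,S$ about the line of centers; the paper's is shorter once the mid-circle machinery (and the unproved assertion that the images of $O_{1},O_{2}$ are symmetric about $I$) is accepted, while yours is more self-contained in that it avoids inversion entirely, at the cost of heavier directed-angle bookkeeping. Your identification of $W$ with the second intersection of $(O_{1}RP_{1})$ and $(O_{2}R_{1,2}P_{1})$ is exactly the joint-point construction recalled in the paper, and I verified your key angle identity and the bisector relation $\angle(SO_{1},SI)=\angle(SI,SO_{2})$ (which indeed holds mod $\pi$ for either choice of $I$); the degeneracies you flag ($R_{1}=R_{2}$, $W$ on the line of centers, $P_{i}=O_{i}$) are equally present, and equally unaddressed, in the paper's own proof.
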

\begin{figure}
\includegraphics[scale=0.06]{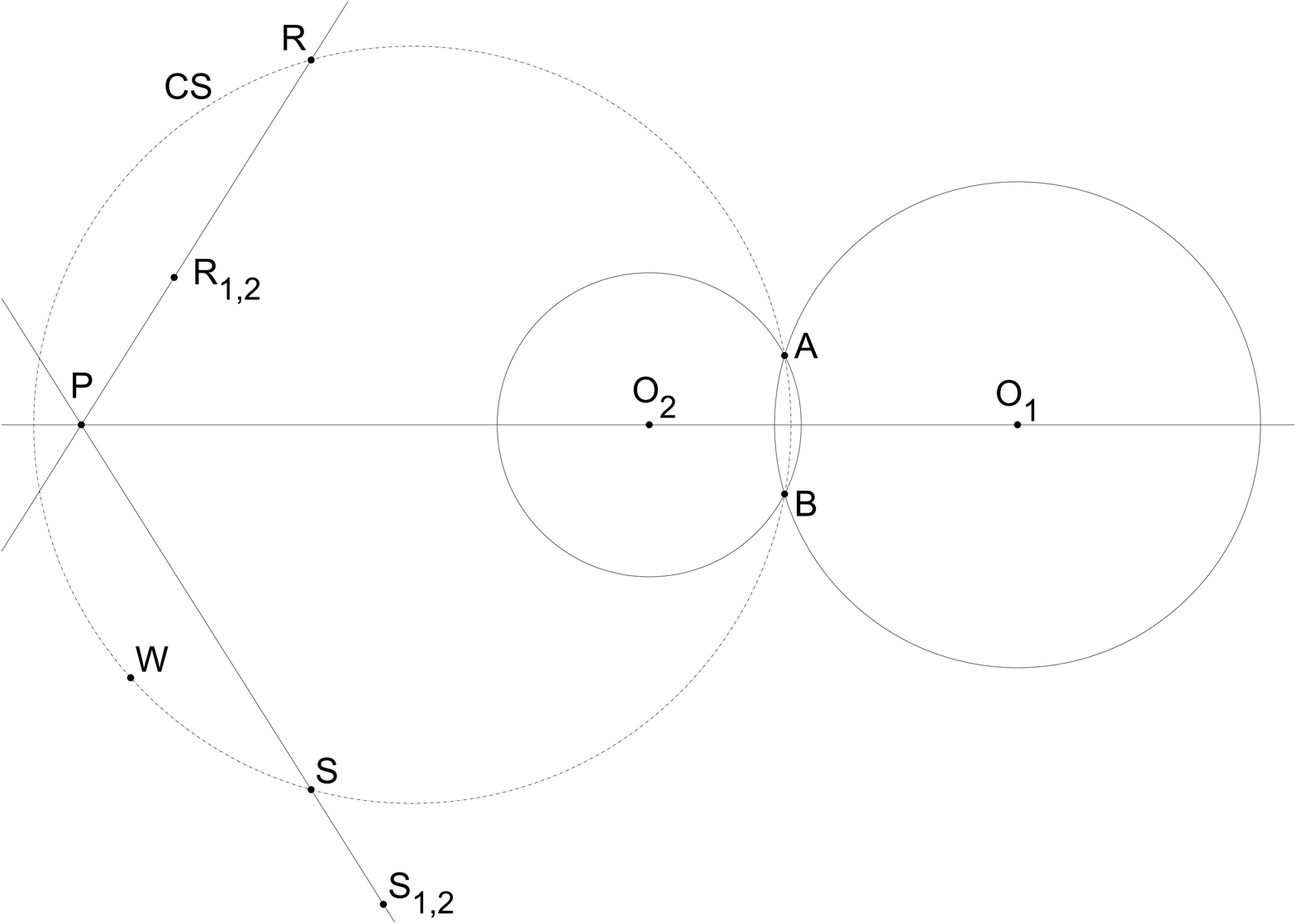}\caption{\label{fig 3 pts on CS}Lemma \ref{lem:3 pts on CS}.}
\end{figure}

\begin{proof}
Perform inversion in the mid-circle. The image of $CS(o_{1},o_{2})$
is the radical axis $RA(o_{1},o_{2})$ (i.e., the line through $A$
and $B$). The images of $R$ and $S$ lie on the line $AB$ and are
symmetric with respect to $I\doteq AB\cap O_{1}O_{2}$. Similarly,
the images of $O_{1}$ and $O_{2}$ are symmetric with respect to
$I$ and lie on the line of centers. By abuse of notation, we will
denote the image of a point under inversion in the mid-circle by the
same letter. 

The lemma is equivalent to the statement that $P\doteq(WO_{1}R)\cap O_{1}O_{2}$
lies on the circle $(WO_{2}S)$. To show this, note that since $P,R,O_{1}$
and W lie on a circle, we have $|IP|\cdot|IO_{1}|=|IW|\cdot|IR|.$
Since $ $$|IO_{2}|=|IO_{1}|$ and $|IR|=|IS|,$ it follows that $|IP|\cdot|IO_{2}|=|IW|\cdot|IS|,$
which implies that $W,P,O_{2},S$ lie on a circle. After inverting
back in the mid-circle, we obtain the result of the lemma. 
\end{proof}
Notice that the lemma is equivalent to the statement that 
\[
RR_{1,2}\cap SS_{2,1}=(WRO_{1})\cap(WSO_{2})\in O_{1}O_{2}.
\]

\subsection{$W$ as the center of spiral similarities for triad circles of $Q^{(1)}$.}

Denote by $o_{1},o_{2},o_{3}$ and $o_{4}$ the triad circles $(D_{1}A_{1}B_{1})$,
$(A_{1}B_{1}C_{1})$, $(B_{1}C_{1}D_{1})$ and $(C_{1}D_{1}A_{1})$
respectively.%
\footnote{In short, the middle vertex defining the circle $o_{i}$ is vertex
number $i$ (the first vertex being $A_{1}$, the second being $B_{1}$,
the third being $C_{1}$ and the last being $D_{1}$). %
} For triad circles in other generations, we add an upper index indicating
 the generation. For example, $o_{1}^{(3)}$ denotes the first triad-circle
in the $3$rd generation quadrilateral, i.e., circle $(D_{3}A_{3}B_{3})$.
Let $T_{1},T_{2},T_{3}$ and $T_{4}$ be the triad triangles $\triangle D_{1}A_{1}B_{1}$,
$\triangle A_{1}B_{1}C_{1}$, $\triangle B_{1}C_{1}D_{1}$ and $\triangle C_{1}D_{1}A_{1}$
respectively.

Consider two of the triad circles of the first generation, $o_{i}$
and $o_{j}$, $i\neq j\in\{1,2,3,4\}$. The set of all possible centers
of spiral similarity taking $o_{i}$ into $o_{j}$ is their circle
of similitude $CS(o_{i},o_{j})$. If $Q^{(1)}$ is a nondegenerate
quadrilateral, it can be shown that $CS(o_{1},o_{2})$ and $CS(o_{1},o_{4})$
intersect at two points (i.e. are not tangent). Let $W$ be the other
point of intersection of $CS(o_{1},o_{2})$ and $CS(o_{1},o_{4})$.%
\footnote{This will turn out to be the same point as the limit point of the
iterative process defined in section \ref{sec: Iterative Process},
so the clash of notation is intentional.%
}

Let $H_{k,l}^{W}$ be the spiral similarity centered at $W$ that
takes $o_{k}$ into $o_{l}$ for any $k,l\in\{1,2,3,4\}$. 
\begin{lem}
\label{lem:Tangency and H12(B)}Spiral similarities $H_{k,l}^{W}$
have the following properties:\end{lem}
\begin{enumerate}
\item $H_{1,2}^{W}(B_{1})=A_{1}\mbox{\ensuremath{\Longleftrightarrow}}H_{2,4}^{W}(A_{1})=C_{1}$.
\item $H_{1,2}^{W}(B_{1})=A_{1}\mbox{\ensuremath{\Longleftrightarrow}}H_{1,4}^{W}(B_{1})=C_{1}$.\end{enumerate}
\begin{proof}
Assume that $H_{1,2}^{W}(B_{1})=A_{1}$. Let $P_{1,2}\doteq A_{1}B_{1}\cap A_{2}B_{2}$
be the joint point of the spiral similarity (centered at $W$) taking
$B_{1}$ into $A_{1}$ and $A_{2}$ into $B_{2}$. Since points $B_{1},P_{1,2},W,A_{2}$
lie on a circle (see Lemma \ref{lem:3 pts on CS}), it follows that
$\angle BWA_{1}=\angle BP_{1,2}A_{2}=\pi/2$. Thus, $A_{2}B_{1}$
is a diameter of $k_{1}\doteq(B_{1}P_{1,2}WA_{2}).$ Since $o_{1}$
is centered at $A_{2}$, the circles $o_{1}$ and $k_{1}$ are tangent
at $B_{1}$. It is easy to see that the converse is also true: if
$o_{1}$ and $(B_{1}WA_{2})$ are tangent at $B_{1}$, then $H_{1,2}^{W}(B_{1})=A_{1}$. 

Since $A_{1},P_{1,2},W,B_{2}$ lie on a circle, it follows that $\angle A_{1}WB_{2}=\angle A_{1}P_{1,2}B_{2}=\pi/2$.
Since $B_{1}\mapsto A_{1}$ and $A_{2}\mapsto B_{2}$ under $H_{1,2}^{W}$,
$\angle B_{1}WA_{2}=\angle A_{1}WB_{2}=\pi/2$. This implies that
the circles $k_{2}\doteq(A_{1}P_{1,2}WB_{2})$ and $o_{2}$ are tangent
at $A_{1}$. It is easy to see that $k_{2}$ is tangent to $o_{2}$
if and only if $k_{1}$ is tangent to $o_{1}$. 

Similarly to the above, let $P_{2,4}\doteq A_{1}H_{2,4}^{W}(A_{1})\cap B_{2}D_{2}$
be the joint point of the spiral similarity centered at $W$ and taking
$o_{2}$ into $o_{4}$. Then $P_{2,4}\in k_{2}$. Similarly to the
argument above, $k_{2}$ is tangent to $o_{2}$ if and only if $k_{4}\doteq(C_{1}P_{2,4}WD_{2})$
is tangent to $o_{4}$. This is equivalent to $H_{2,4}^{W}(A_{1})=C_{1}$. 

The second statement follows since $H_{1,4}^{W}(B_{1})=H_{2,4}^{W}\circ H_{1,2}^{W}(B_{1})=H_{2,4}^{W}(A_{1})=C_{1}$.
(Here and below the compositions of transformations are read right
to left). 
\end{proof}
\begin{figure}
\includegraphics[scale=0.1]{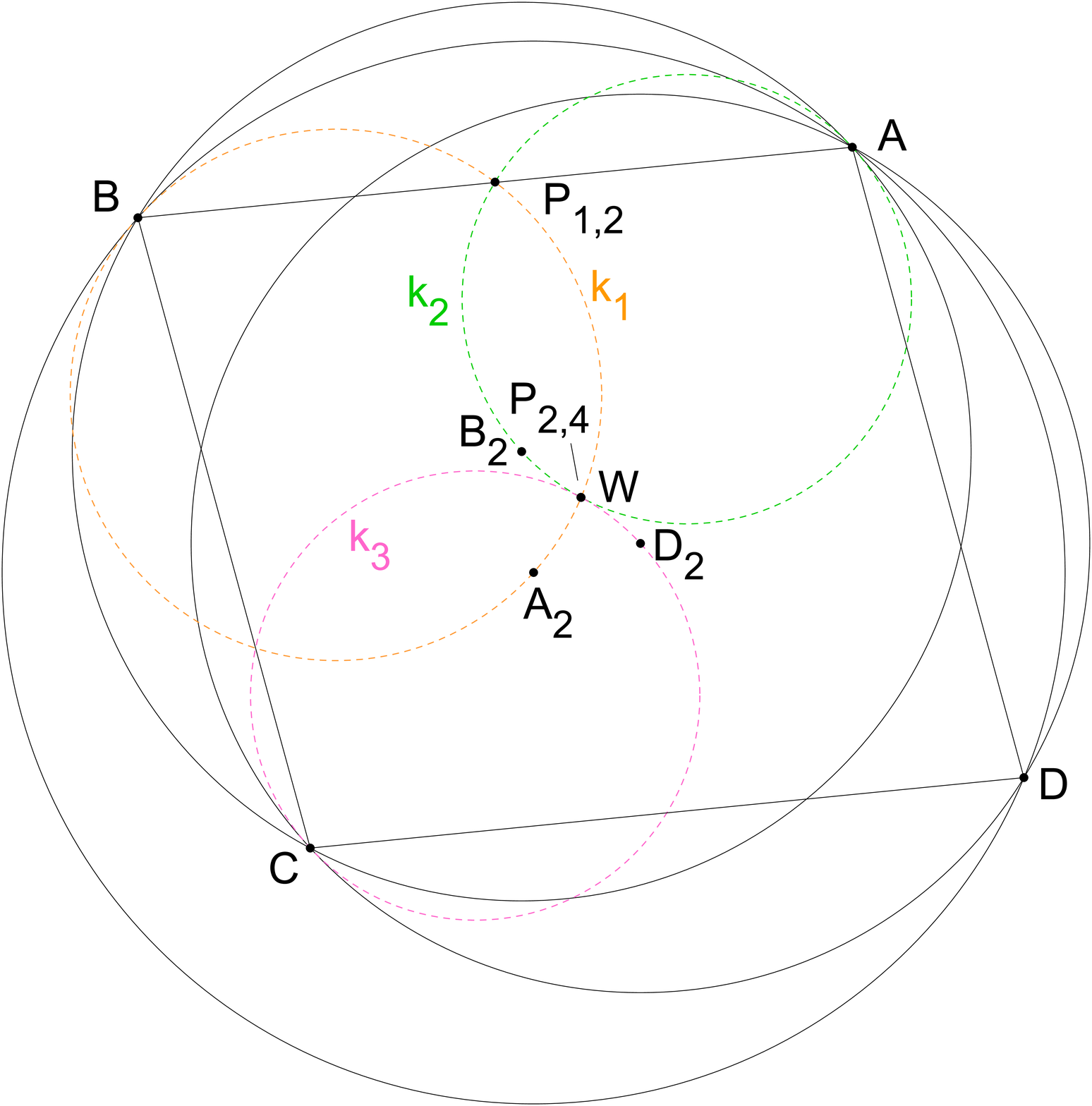}\includegraphics[scale=0.1]{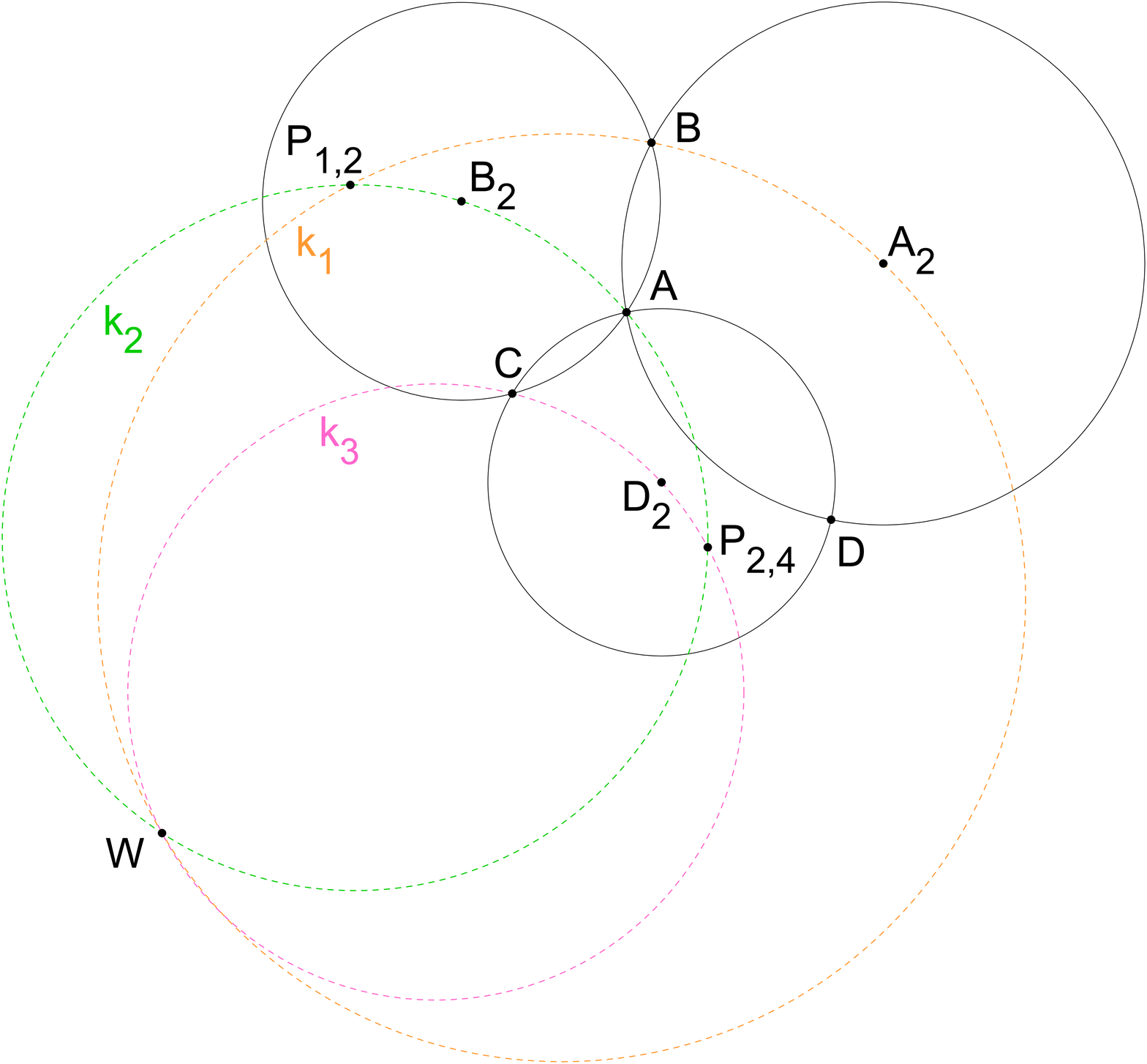}

\caption{Proofs of Lemma \ref{lem:Tangency and H12(B)}\label{fig:H12 (B) is C}
and Lemma \ref{lem:H14 of B is C}.}

\end{figure}

Notice that circles $o_{1}$ and $o_{4}$ have two common vertices,
$A_{1}$ and $D_{1}$. The next Lemma shows that $H_{1,4}^{W}$ takes
$B_{1}$ (i.e., the third vertex on $o_{1}$) to $C_{1}$ (the third
vertex on $o_{4}$). This property is very important for showing that
any triad circle from the first generation can be transformed into
another triad circle from the first generation by a spiral similarity
centered at $W$. Similar properties hold for $H_{1,2}^{W}$ and $H_{2,4}^{W}$.
Namely, we have
\begin{lem}
\label{lem:H14 of B is C} $H_{1,4}^{W}(B_{1})=C_{1}$, $H_{1,2}^{W}(D_{1})=C_{1}$,
$H_{4,2}^{W}(D_{1})=B_{1}$. \end{lem}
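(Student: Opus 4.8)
The plan is to deduce Lemma \ref{lem:H14 of B is C} from Lemma \ref{lem:Tangency and H12(B)} together with Lemma \ref{lem:3 pts on CS}, by first establishing the tangency hypothesis that appears in the proof of Lemma \ref{lem:Tangency and H12(B)}, namely that the circle $(B_1 W A_2)$ is tangent to $o_1$ at $B_1$. Recall that $W$ was \emph{defined} as the second intersection point of $CS(o_1,o_2)$ and $CS(o_1,o_4)$. So I would first record what $W\in CS(o_1,o_2)\cap CS(o_1,o_4)$ means concretely: there are spiral similarities $H_{1,2}^W$ and $H_{1,4}^W$ centered at $W$ taking $o_1$ to $o_2$ and $o_1$ to $o_4$ respectively, hence also $H_{2,4}^W = H_{1,4}^W\circ(H_{1,2}^W)^{-1}$ is centered at $W$ and takes $o_2$ to $o_4$, i.e. $W\in CS(o_2,o_4)$ as well.

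The key step is to show $H_{1,2}^W(B_1)=A_1$. This is where the specific geometry of the triad circles enters: $o_1=(D_1A_1B_1)$ and $o_2=(A_1B_1C_1)$ share the chord $A_1B_1$, and $o_1=(D_1A_1B_1)$ and $o_4=(C_1D_1A_1)$ share the chord $A_1D_1$, while $o_2$ and $o_4$ share the chord... actually $o_2=(A_1B_1C_1)$ and $o_4=(A_1C_1D_1)$ share $A_1C_1$. I would use property \ref{enu:RA and two chords} of the preliminaries: the radical axis $RA(o_1,o_2)$ is the line $A_1B_1$, $RA(o_1,o_4)$ is the line $A_1D_1$, and $RA(o_2,o_4)$ is the line $A_1C_1$; all three radical axes pass through the common point $A_1$, consistent with $A_1$ being the radical center of $o_1,o_2,o_4$. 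Now apply the inversion $\iota$ centered at $W$ (property \ref{enu:CS <-> RA}): it sends $CS(o_1,o_2)$ to $RA(o_1',o_2')$ and $RA(o_1,o_2)$ to $CS(o_1',o_2')$, and similarly for the other two pairs. The point $A_1$, being on all three radical axes $RA(o_i,o_j)$, maps to a point $A_1'$ lying on all three circles of similitude $CS(o_i',o_j')$; but these three circles of similitude of $o_1',o_2',o_4'$ pass through a common point (their radical-type center), and one checks that this forces $A_1'$ to be that common point, pinning down the configuration enough that the spiral similarity $H_{1,2}^W$, which in the inverted picture becomes a dilation about... — at this point I expect the cleanest route is actually the symmetric one: use Lemma \ref{lem:3 pts on CS} with the three points $W, R, S$ on $CS(o_1,o_2)$ chosen so that the joint point lands on a vertex, forcing $H_{1,2}^W(B_1)=A_1$ via the tangency criterion already extracted in the proof of Lemma \ref{lem:Tangency and H12(B)}.

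Once $H_{1,2}^W(B_1)=A_1$ is established, Lemma \ref{lem:Tangency and H12(B)}(2) immediately gives $H_{1,4}^W(B_1)=C_1$, which is the first assertion. For the second assertion $H_{1,2}^W(D_1)=C_1$: I would run the \emph{same} argument with the roles of the pair $(B_1,A_1)$ replaced by $(D_1, ?)$ — note $D_1\in o_1$ and we want its image on $o_2$; since $o_1$ and $o_2$ meet at $A_1,B_1$, and the spiral similarity $H_{1,2}^W$ is the \emph{unique} one centered at $W$ taking $o_1$ to $o_2$, it suffices to check it sends the third vertex $D_1$ of $o_1$ (third after the two common ones $A_1,B_1$) to the third vertex $C_1$ of $o_2$. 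This is a verbatim repeat of the argument for the first assertion after relabeling, or alternatively follows by composing: apply $H_{1,2}^W$ to the relation $H_{1,4}^W(B_1)=C_1$ read backwards through $H_{4,?}$. For the third, $H_{4,2}^W(D_1)=B_1$: write $H_{4,2}^W = H_{1,2}^W\circ (H_{1,4}^W)^{-1}$; since $(H_{1,4}^W)^{-1}(D_1)=D_1$ (as $D_1$ is a common point of $o_1$ and $o_4$, hence fixed setwise — more precisely $H_{1,4}^W$ sends the common chord's endpoints $A_1, D_1$ of $o_1$ to the common chord's endpoints of $o_4$, which are again $A_1, D_1$, and one must check orientation so that $D_1\mapsto D_1$) and then $H_{1,2}^W(D_1)=C_1$... — hmm, that gives $C_1$, not $B_1$, so I would instead derive it directly: $H_{4,2}^W$ takes $o_4=(C_1D_1A_1)$ to $o_2=(A_1B_1C_1)$, these share chord $A_1C_1$, so it sends $\{A_1,C_1\}$ to $\{A_1,C_1\}$ and the third vertex $D_1$ of $o_4$ to the third vertex $B_1$ of $o_2$, again by the same tangency/joint-point argument.

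The main obstacle is the very first reduction: proving that the defining property of $W$ (lying on the two circles of similitude) actually forces $H_{1,2}^W(B_1)=A_1$ rather than sending $B_1$ to some other point of $o_2$. A generic point of $CS(o_1,o_2)$ does \emph{not} map $B_1$ to $A_1$; the key input must be that $W$ \emph{simultaneously} lies on $CS(o_1,o_4)$ (equivalently on $CS(o_2,o_4)$), and that the three triad circles share the vertex $A_1$ as a common point. I expect the correct argument is: the inversion centered at $W$ turns $CS(o_1,o_2)$ into the line $A_1B_1$ (image of $RA$) and $CS(o_1,o_4)$ into the line $A_1D_1$; since $W$ lies on both circles of similitude, $W$ maps to the intersection of these two image lines, and chasing the image of $B_1$ under this inversion — it lies on $CS(o_1',o_2')$ which is the image of line $A_1B_1$ — gives a circle through specific points forcing the tangency $(B_1WA_2)\parallel$-tangent-$o_1$ at $B_1$, which by the already-proven equivalence in Lemma \ref{lem:Tangency and H12(B)} yields $H_{1,2}^W(B_1)=A_1$. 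Making this inversion bookkeeping rigorous — tracking which circle goes to which line and identifying the image of $B_1$ precisely — is the part that will require care; everything after it is formal manipulation of the commuting spiral similarities $H_{i,j}^W$.
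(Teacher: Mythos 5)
Your proposal hinges on the claim that $H_{1,2}^{W}(B_{1})=A_{1}$, and more broadly on the idea that each $H_{i,j}^{W}$ permutes the two common points of $o_{i}$ and $o_{j}$ (you invoke this again when you assert $(H_{1,4}^{W})^{-1}(D_{1})=D_{1}$ and that $H_{4,2}^{W}$ sends $\{A_{1},C_{1}\}$ to itself). This is false for a generic quadrilateral. The spiral similarity $H_{1,2}^{W}$ has ratio $R_{2}/R_{1}$, so $H_{1,2}^{W}(B_{1})=A_{1}$ would force $|WA_{1}|/|WB_{1}|=R_{2}/R_{1}$; since (by the isodynamic property that this lemma ultimately yields) $|WA_{1}|:|WB_{1}|=R_{4}:R_{3}$, your claim would force $R_{1}R_{4}=R_{2}R_{3}$, i.e.\ $\sin\alpha\sin\delta=\sin\beta\sin\gamma$ --- a codimension-one condition on the quadrilateral, not an identity. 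Equivalently: there is exactly one point of $CS(o_{1},o_{2})$ (the second intersection of the circles on diameters $B_{1}A_{2}$ and $A_{1}B_{2}$) whose spiral similarity sends $B_{1}$ to $A_{1}$, and nothing forces $W$ to be that point; Lemma \ref{lem:Tangency and H12(B)} characterizes exactly when it is, via a tangency condition. Similarly, $H_{1,4}^{W}(D_{1})=D_{1}$ would force $R_{1}=R_{4}$. So the tangency hypothesis you spend the first half of the proposal trying to establish cannot be established, your inversion bookkeeping (correct as far as it goes --- the radical axes do concur at $A_{1}$) trails off without producing it, and the reductions in the second half collapse for the same reason. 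Note also that "the spiral similarity centered at $W$ taking $o_{1}$ to $o_{2}$ is unique, so it suffices to check it sends $D_{1}$ to $C_{1}$" is just a restatement of the goal, not an argument.

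The paper's proof is structured precisely to sidestep this trap. Lemma \ref{lem:Tangency and H12(B)} disposes of the special case $H_{1,2}^{W}(B_{1})=A_{1}$; in the generic case $H_{1,2}^{W}(B_{1})\neq A_{1}$ one writes $H_{1,4}^{W}=H_{2,4}^{W}\circ H_{1,2}^{W}$ and tracks $B_{1}\mapsto B_{1,2}\mapsto H_{2,4}^{W}(B_{1,2})$ through the joint-point construction: Lemma \ref{lem:3 pts on CS} identifies $B_{1,2}$ as $P_{1,2}B_{1}\cap(P_{1,2}B_{2}W)$ and then shows that the circle $(WP_{2,4}D_{2})$ passes through $C_{1}$, which pins $H_{2,4}^{W}(B_{1,2})$ down as $C_{1}$. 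What is sound in your proposal is the structural observation that all three identities say "third vertex of $o_{i}$ maps to third vertex of $o_{j}$" and that they are interderivable by composing the commuting $H_{i,j}^{W}$ once one of them is known; but you still need one of them proved without the tangency assumption, and that generic-case argument is exactly what is missing.
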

\begin{proof}
Lemma \ref{lem:Tangency and H12(B)} shows that $H_{1,2}^{W}(B_{1})=A_{1}$
implies $H_{1,4}^{W}(B_{1})=C_{1}$. Assume that $H_{1,2}^{W}(B_{1})\neq A_{1}$.
To find the image of $B_{1}$ under $H_{1,4}^{W}$, represent the
latter as the composition $H_{2,4}^{W}\circ H_{1,2}^{W}$. First,
$H_{1,2}^{W}(B_{1})=P_{1,2}B_{1}\cap(P_{1,2}B_{2}W)$, where $P_{1,2}$
is as in Lemma \ref{lem:Tangency and H12(B)}, see Figure \ref{fig:H12 (B) is C}.
For brevity, let $B_{1,2}\doteq H_{1,2}^{W}(B_{1})$. (The indices
refer to the fact that $B_{1,2}$ is the image of $B$ under spiral
similarity taking $o_{1}$ into $o_{2}$). 

Now we construct $H_{1,4}^{W}(B_{1})=H_{2,4}^{W}(B_{1,2})$. By Lemma
\ref{lem:3 pts on CS}, $H_{1,4}^{W}(B_{1})=P_{2,4}B_{1,2}\cap(WP_{2,4}D_{2})$,
where $P_{2,4}$ is as in Lemma \ref{lem:Tangency and H12(B)}. Applying
Lemma \ref{lem:3 pts on CS} to the circle $(WP_{2,4}D_{2})$, we
conclude that it passes through $C_{1}$. Since by assumption $H_{1,2}^{W}(B_{1})\neq A_{1}$,
it follows that $H_{2,4}^{W}\circ H_{1,2}^{W}(B_{1})=C_{1}$. Thus,
$H_{1,4}^{W}(B_{1})=C_{1}$. The other statements in the Lemma can
be shown in a similar way. 
\end{proof}
The last Lemma allows us to show that $W$ lies on all of the circles
of similitude $CS(o_{i},o_{j})$.
\begin{thm}
\label{thm:W on CSes}$W\in CS(o_{i},o_{j})$ for all $i,j\in\{1,2,3,4\}$.\end{thm}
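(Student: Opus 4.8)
The plan is to leverage two mechanisms against the definition of $W$ as a point of $CS(o_1,o_2)\cap CS(o_1,o_4)$: composing spiral similarities that share the center $W$, and exploiting the cyclic symmetry of the construction under the relabeling $A_1B_1C_1D_1\mapsto B_1C_1D_1A_1$, which permutes the triad circles $o_1\to o_2\to o_3\to o_4\to o_1$.

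A preliminary observation I would record first: a direct spiral similarity $f$ with $f(o_i)=o_j$ is completely determined by the image of a single point of $o_i$. Indeed its ratio must be $R_j/R_i$, and if $g$ is another such map with $g(v)=f(v)$ for some $v\in o_i$, then $h:=g\circ f^{-1}$ is a direct similarity carrying $o_j$ to itself and fixing the point $f(v)\in o_j$; carrying $o_j$ to itself forces $h$ to fix the center $O_j$ and to have ratio $1$, so $h$ is a rotation about $O_j$, and fixing the further point $f(v)\neq O_j$ forces $h=\mathrm{id}$. Consequently, given $v\in o_i$ and $v'\in o_j$ there is at most one point $P\in CS(o_i,o_j)$ with $H_{i,j}^{P}(v)=v'$.

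The easy half of the theorem is closure under composition. Since $W\in CS(o_1,o_2)\cap CS(o_1,o_4)$, the maps $H_{1,2}^{W}$ and $H_{1,4}^{W}$ are defined and centered at $W$, so $H_{1,4}^{W}\circ(H_{1,2}^{W})^{-1}$ is again a direct spiral similarity centered at $W$ (it is not the identity, as it carries $o_2$ to $o_4\neq o_2$) and it takes $o_2$ to $o_4$; hence $W\in CS(o_2,o_4)$. More generally, as soon as $W\in CS(o_1,o_j)$ for every $j$, the composition $H_{1,j}^{W}\circ(H_{1,i}^{W})^{-1}$ exhibits $W$ as the center of a spiral similarity taking $o_i$ to $o_j$, so $W\in CS(o_i,o_j)$ for all $i,j$. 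Thus everything reduces to the single statement $W\in CS(o_1,o_3)$.

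This last point is where the symmetry enters, and it is the step I expect to require the most care. Relabeling the vertices cyclically and running the construction and Lemma \ref{lem:H14 of B is C} on the relabeled quadrilateral produces a point $\tilde W$, namely the second intersection of $CS(o_2,o_3)$ with $CS(o_2,o_1)$, which by the composition argument above (applied in the relabeled picture) lies on $CS(o_2,o_3)$, $CS(o_2,o_1)$ and $CS(o_1,o_3)$; and the relabeled Lemma \ref{lem:H14 of B is C} yields $H_{1,2}^{\tilde W}(D_1)=C_1$. On the other hand, Lemma \ref{lem:H14 of B is C} for the original quadrilateral gives $H_{1,2}^{W}(D_1)=C_1$. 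Since $W$ and $\tilde W$ both lie on $CS(o_1,o_2)$ and realize the same image $D_1\mapsto C_1$, the uniqueness observation forces $\tilde W=W$; in particular $W\in CS(o_1,o_3)$, and combined with the composition step the theorem follows. The delicate points to get right are the precise bookkeeping of the relabeling (so that the relabeled Lemma \ref{lem:H14 of B is C} really does normalize $H_{1,2}^{\tilde W}$ by $D_1\mapsto C_1$ and not by some other pair of vertices), and the remark that for a nondegenerate noncyclic quadrilateral the four triad radii are distinct, so every $CS(o_i,o_j)$ is an honest circle and the preliminaries of Subsection \ref{sub:CS, MS, RA.} apply verbatim.
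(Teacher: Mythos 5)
Your proposal is correct and follows essentially the same route as the paper: the paper likewise introduces the point $\widetilde{W}$ as the second intersection of $CS(o_{1},o_{2})\cap CS(o_{2},o_{3})$, applies Lemma \ref{lem:H14 of B is C} to get $H_{1,2}^{\widetilde{W}}(D_{1})=C_{1}=H_{1,2}^{W}(D_{1})$, and concludes $\widetilde{W}=W$ from the uniqueness of the spiral similarity taking $o_{1}$ to $o_{2}$ with a prescribed image of one point, after which compositions of spiral similarities centered at $W$ give the remaining circles of similitude. Your explicit justification of that uniqueness (and of the reduction to a single new incidence) is a slightly fuller write-up of steps the paper leaves implicit, but the argument is the same.
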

\begin{proof}
By definition, $W\in CS(o_{1},o_{2})\cap CS(o_{1},o_{4})\cap CS(o_{2},o_{4})$.
We will show that $W\in CS(o_{3},o_{i})$ for any $i\in\{1,2,4\}$.

Recall that $B_{1}\in CS(o_{1},o_{2})\cap CS(o_{2},o_{3})$. Let $\widetilde{W}$
be the second point in the intersection $CS(o_{1},o_{2})\cap CS(o_{2},o_{3})$,
so that $CS(o_{1},o_{2})\cap CS(o_{2},o_{3})=\{B_{1},\widetilde{W}\}$.
By Lemma \ref{lem:H14 of B is C}, $H_{1,2}^{\widetilde{W}}(D_{1})=C_{1}$.
Since $H_{1,2}^{\widetilde{W}}(A_{2})=B_{2}$, it follows that $H_{1,2}^{\widetilde{W}}=H_{1,2}^{W}$,
which implies that $\widetilde{W}=W$.\textcolor{blue}{{} }Therefore,
$W$ is the common point for all the circles of similitude $CS(o_{i},o_{j})$,
$i,j\in\{1,2,3,4\}$. 
\end{proof}

\subsection{Properties of W}

The angle property (\ref{eq:angle addition for CS}) of the circle
of similitude implies
\begin{cor}
\label{cor:Angle addition for W}The angles subtended by the quadrilateral's
sides at $W$ are as follows (see Figure  \ref{fig:Angles-subtended-by-W}):
\begin{eqnarray*}
\angle AWB & = & \angle ACB+\angle ADB,\\
\angle BWC & = & \angle BAC+\angle BDC,\\
\angle CWD & = & \angle CAD+\angle CBD,\\
\angle DWA & = & \angle DBA+\angle DCA.
\end{eqnarray*}

\end{cor}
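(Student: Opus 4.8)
The plan is to derive all four identities as direct instances of the angle-addition property (\ref{eq:angle addition for CS}) of the circle of similitude, using Theorem \ref{thm:W on CSes} to guarantee that $W$ may legitimately play the role of the point ``$M$'' on each relevant $CS$. The only real content is bookkeeping with incidences: each side of $Q^{(1)}$ is a common chord of exactly two of the triad circles, and the two vertices not on that side are split one per circle.

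First I would record the incidences that follow immediately from the definitions $o_{1}=(D_{1}A_{1}B_{1})$, $o_{2}=(A_{1}B_{1}C_{1})$, $o_{3}=(B_{1}C_{1}D_{1})$, $o_{4}=(C_{1}D_{1}A_{1})$: the circles $o_{1}$ and $o_{2}$ meet exactly at $A_{1},B_{1}$, with $D_{1}\in o_{1}$ and $C_{1}\in o_{2}$; the circles $o_{2}$ and $o_{3}$ meet exactly at $B_{1},C_{1}$, with $A_{1}\in o_{2}$ and $D_{1}\in o_{3}$; the circles $o_{3}$ and $o_{4}$ meet exactly at $C_{1},D_{1}$, with $B_{1}\in o_{3}$ and $A_{1}\in o_{4}$; and the circles $o_{4}$ and $o_{1}$ meet exactly at $D_{1},A_{1}$, with $C_{1}\in o_{4}$ and $B_{1}\in o_{1}$. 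Then, to get the first identity, apply (\ref{eq:angle addition for CS}) to the pair $o_{1},o_{2}$ with intersection points $A=A_{1}$, $B=B_{1}$, taking $M=W$ (permissible since $W\in CS(o_{1},o_{2})$ by Theorem \ref{thm:W on CSes}), $K=D_{1}\in o_{1}$, and $L=C_{1}\in o_{2}$; this gives $\angle A_{1}WB_{1}=\angle A_{1}D_{1}B_{1}+\angle A_{1}C_{1}B_{1}$, i.e. $\angle AWB=\angle ACB+\angle ADB$ as directed angles. The other three identities follow in exactly the same way: $\angle BWC$ from the pair $o_{2},o_{3}$ with $K=A_{1}$, $L=D_{1}$; $\angle CWD$ from the pair $o_{3},o_{4}$ with $K=B_{1}$, $L=A_{1}$; and $\angle DWA$ from the pair $o_{4},o_{1}$ with $K=C_{1}$, $L=B_{1}$.

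I do not expect a genuine obstacle here; the subtle point, rather than the hard one, is that (\ref{eq:angle addition for CS}) is an identity of \emph{directed} angles modulo $\pi$, so the four equalities hold verbatim with no case distinction about convexity of $Q^{(1)}$ or the side of a chord on which $W$ lies. If one wishes to match Figure \ref{fig:Angles-subtended-by-W} with ordinary unsigned angles, one would add a short remark locating $W$ relative to the chords $A_{1}B_{1},B_{1}C_{1},C_{1}D_{1},D_{1}A_{1}$ so that the directed-angle relations lift to honest angle equalities; but I would present the directed-angle version as the clean statement, from which everything else is immediate.
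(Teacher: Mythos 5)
Your proposal is correct and matches the paper's argument exactly: the corollary is derived as a direct application of the directed-angle addition property (\ref{eq:angle addition for CS}) of the circle of similitude, with $W$ qualifying as the point on each $CS(o_i,o_j)$ by Theorem \ref{thm:W on CSes}. The paper states this in one line; your write-up merely makes the incidence bookkeeping explicit.
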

\begin{flushleft}
\begin{figure}
\includegraphics[scale=0.3]{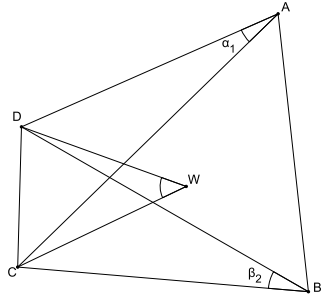}

\caption{\label{fig:Angles-subtended-by-W} $\angle CWD=\angle CAD+\angle CBD.$}
\end{figure}

\par\end{flushleft}

This allows us to view $W$ as a replacement of the circumcenter in
a certain sense: the angle relations above are generalizations of
the relation $\angle AOB=\angle ACB+\angle ADB$ between the angles
in a cyclic quadrilateral $ABCD$ with circumcenter $O$. (Of course,
in this special case, $\angle ACB=\angle ADB$). 

\begin{flushleft}
Since $W\in CS(o_{i},o_{j})$ for all $i,j$, $W$ can be used as
the center of spiral similarity taking any of the triad circles into
another triad circle. This implies the following
\par\end{flushleft}
\begin{thm}
\label{thm: Isoptic property}(Isoptic property) All the triad circles
$o_{i}$ subtend equal angles at $W.$
\end{thm}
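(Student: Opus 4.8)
The plan is to derive the isoptic property directly from Theorem \ref{thm:W on CSes} together with the angular characterization of the circle of similitude. Recall that for two intersecting circles $o_i, o_j$ meeting at points $A$ and $B$, the circle of similitude $CS(o_i,o_j)$ passes through both $A$ and $B$, and for any point $P \in CS(o_i,o_j)$ the spiral similarity centered at $P$ taking $o_i$ to $o_j$ rotates by the angle $\angle O_i P O_j$; equivalently, the arc $AB$ of $o_i$ is seen from $O_i$ at some angle, the arc $AB$ of $o_j$ is seen from $O_j$ at some angle, and the relation \eqref{eq:angle addition for CS} ties these together. The key observation is that the half-angle subtended by a circle $o_i$ (of radius $R_i$) at a point $P$ at distance $d_i$ from its center $O_i$ is $\arcsin(R_i/d_i)$, so the isoptic property is equivalent to the statement that $R_i/|WO_i|$ is the same for all $i$.

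First I would make precise what "subtends equal angles" means: the angle at which a circle $o_i$ of radius $R_i$ is seen from an external point $W$ is $2\arcsin\!\bigl(R_i/|WO_i|\bigr)$, determined entirely by the ratio $R_i/|WO_i|$. So it suffices to show that this ratio is independent of $i \in \{1,2,3,4\}$. Second, I would invoke Theorem \ref{thm:W on CSes}: since $W \in CS(o_i,o_j)$ for every pair $i,j$, the defining property of the circle of similitude (it is the Apollonian circle for $O_i, O_j$ with ratio $R_i/R_j$) gives immediately
\[
\frac{|WO_i|}{|WO_j|} = \frac{R_i}{R_j}, \qquad \text{equivalently} \qquad \frac{R_i}{|WO_i|} = \frac{R_j}{|WO_j|}.
\]
Applying this for all pairs shows $R_1/|WO_1| = R_2/|WO_2| = R_3/|WO_3| = R_4/|WO_4|$, hence the common half-angle $\arcsin(R_i/|WO_i|)$ is the same for all four triad circles, which is exactly the isoptic property.

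The argument is essentially a one-line consequence of the previous theorem, so there is no serious obstacle; the only thing requiring a touch of care is the degenerate or boundary behavior. One should note that $W$ is necessarily outside (or on) each triad circle for the "visible angle" to make classical sense — or, more robustly, one can phrase the whole statement in terms of the ratio $R_i/|WO_i|$ (or of the power of $W$ with respect to $o_i$ relative to $R_i^2$), which makes the claim meaningful regardless of position and still follows verbatim from the Apollonian-circle identity. I would also remark that this gives an alternative, coordinate-free confirmation that $W$ deserves to be called an isoptic point, and that the common value of $R_i/|WO_i|$ is an invariant of the quadrilateral worth recording, as it reappears in the isodynamic property (Corollary \ref{cor:Isodynamic property}), where $|WA|$ is inversely proportional to $R_3 = $ radius of $(B_1C_1D_1)$, etc.
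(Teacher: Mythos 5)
Your proposal is correct and matches the paper's argument: both deduce the isoptic property as an immediate consequence of Theorem \ref{thm:W on CSes} together with the defining property of the circle of similitude (the paper phrases it via the spiral similarity centered at $W$ carrying one triad circle to another, you via the equivalent Apollonian ratio $|WO_i|/|WO_j|=R_i/R_j$). Your extra care about the internal/external position of $W$ is consistent with the paper's own remark that $W$ lies inside all triad circles for a convex quadrilateral and outside for a concave one, with the isoptic angle in each case determined by the common ratio $R_i/|WO_i|$.
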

\textcolor{black}{In particular, $W$ is inside of all of the triad
circles in the case of a convex quadrilateral and outside of all of
the triad circles in the case of a concave quadrilateral. (This was
pointed out by Scimemi in \cite{Scimemi}). If $W$ is inside of a
triad circle, the isoptic angle equals to $\angle TOT'$, where $T$
and $T'$ are the points on the circle so that $TT'$ goes through
$W$ and $TT'\perp OW$. (See Figure  \ref{fig:Isoptic-angles}, where
$\angle T_{1}A_{2}W$ and $\angle T_{4}B_{2}W$ are halves of the
isoptic angle in $o_{1}$ and $o_{4}$ respectively). If $W$ is outside
of a triad circle centered at $O$ and $WT$ is the tangent line to
the circle, so that $T$ is point of tangency, $\angle OTW$ is half
of the isoptic angle. Inverting in a triad circle of the second generation,
we get that the triad circles are viewed at equal angles from the
vertices opposite to their centers (see Figure \ref{fig:Isoptic-angles}). }

\begin{flushleft}
\begin{figure}
\includegraphics[scale=0.25]{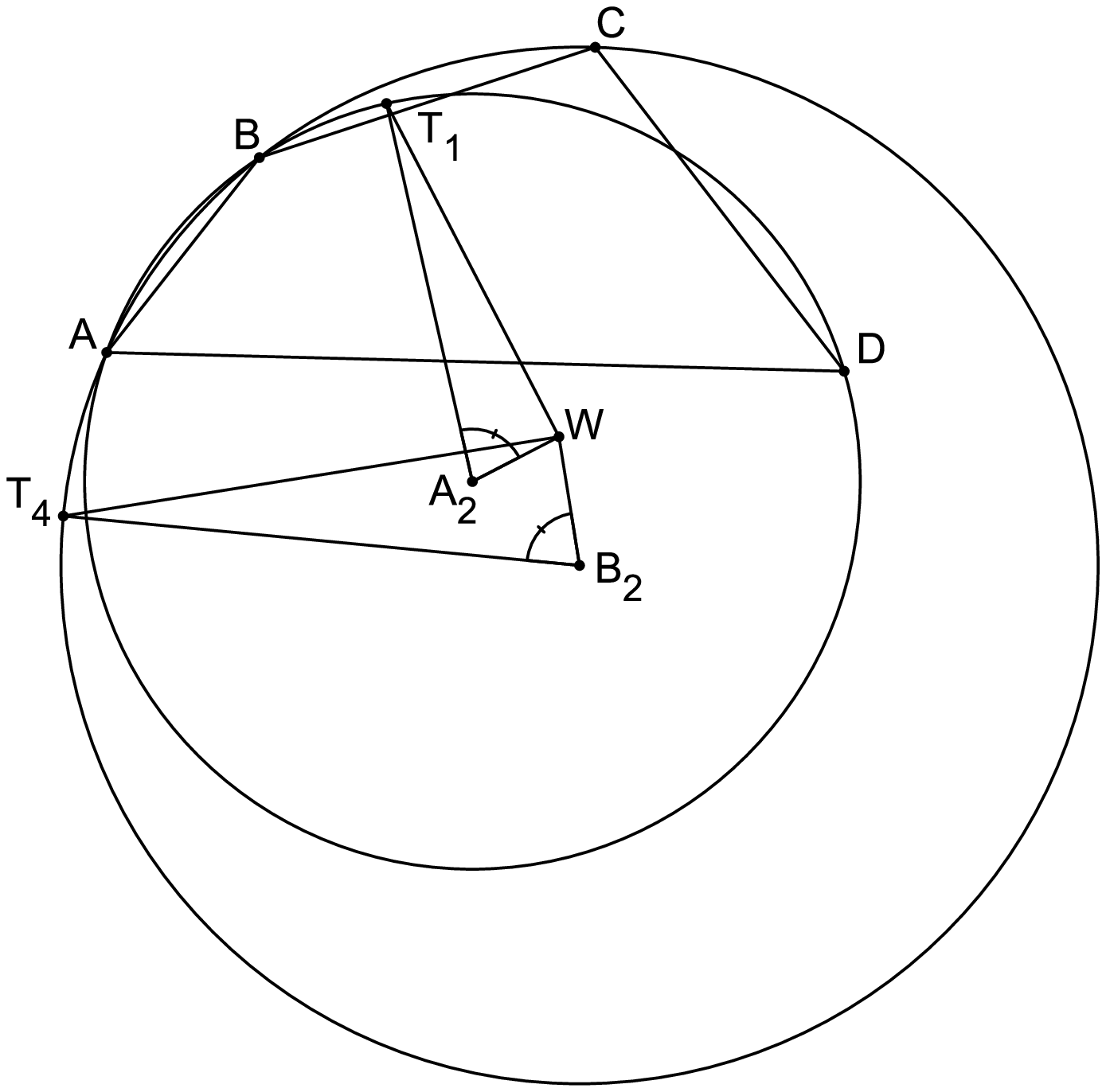}\includegraphics[scale=0.05]{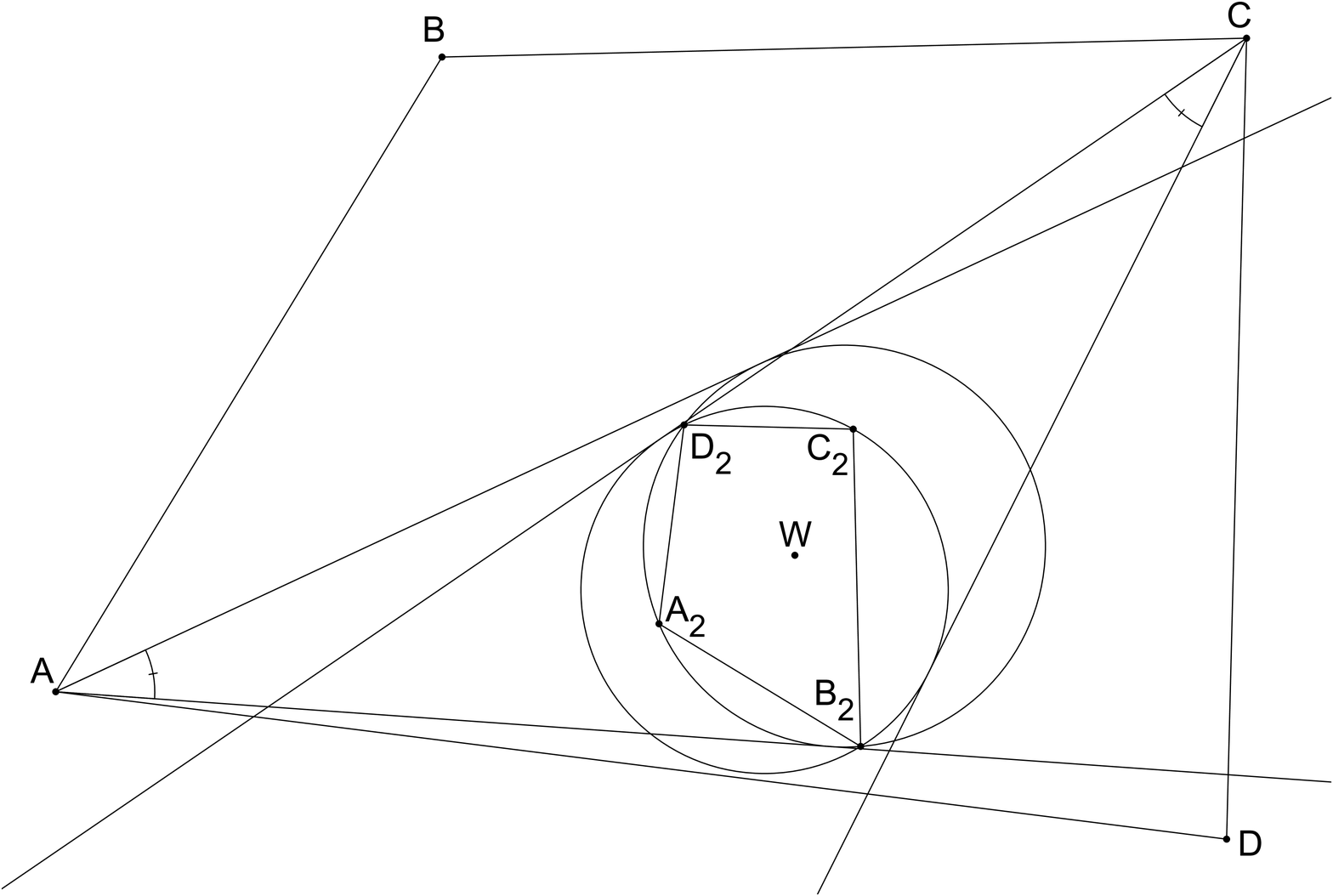}\caption{\label{fig:Isoptic-angles}The isoptic angles before and after inversion. }

\end{figure}

\par\end{flushleft}

\textcolor{black}{Recall that the }\textcolor{black}{\emph{power of
a point}}\textcolor{black}{{} $P$ with respect to a circle $o$ centered
at $O$ with radius $R$ is the square of the length of the tangent
from $P$ to the circle, that is,
\[
h=|PO|^{2}-R^{2}.
\]
The isoptic property implies the following}
\begin{cor}
\textcolor{black}{\label{cor:power of W}The powers of $W$ with respect
to triad circles are proportional to the squares of the radii of the
triad circles.}
\end{cor}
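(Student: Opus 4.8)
The plan is to derive the corollary directly from the isoptic property just established in Theorem~\ref{thm: Isoptic property}, using only the elementary relationship between the isoptic angle of a circle at a point and the power of that point with respect to the circle. Fix a triad circle $o_i$ centered at $O_i$ with radius $R_i$, and let $\theta$ denote the common isoptic angle at which every triad circle is seen from $W$. I would treat the two cases separately according to whether $W$ lies inside or outside $o_i$; by the remark following Theorem~\ref{thm: Isoptic property} this is determined uniformly (inside for all $o_i$ if $Q^{(1)}$ is convex, outside for all $o_i$ if concave), so the same case occurs for every $i$ simultaneously, which is what makes the proportionality statement meaningful.

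First I would handle the case where $W$ is outside $o_i$. Here the isoptic angle $\theta$ is the angle between the two tangent lines from $W$ to $o_i$; if $T$ is a point of tangency, then $\angle O_iWT = \theta/2$, the triangle $O_iWT$ is right-angled at $T$, so $\sin(\theta/2) = R_i/|WO_i|$, hence $|WO_i|^2 = R_i^2/\sin^2(\theta/2)$ and the power is
\begin{equation}
h_i = |WO_i|^2 - R_i^2 = R_i^2\left(\frac{1}{\sin^2(\theta/2)} - 1\right) = R_i^2\cot^2(\theta/2).\label{eq:power-outside}
\end{equation}
Since $\theta$ is the same for all $i$, this gives $h_i = R_i^2\cot^2(\theta/2)$ with the proportionality constant $\cot^2(\theta/2)$ independent of $i$. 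For the case where $W$ is inside $o_i$, the chord through $W$ perpendicular to $WO_i$ has endpoints $T,T'$ with $\angle T O_i T' = \theta$, so $|WT| = R_i\sin(\theta/2)$ while $|WO_i| = R_i\cos(\theta/2)$; the power is then negative, $h_i = |WO_i|^2 - R_i^2 = -R_i^2\sin^2(\theta/2)$, again proportional to $R_i^2$ with constant $-\sin^2(\theta/2)$. In either case $h_i/R_i^2$ is a constant independent of $i$, which is exactly the assertion.

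There is essentially no obstacle here: the only thing to be careful about is the sign/configuration bookkeeping (inside versus outside, and the corresponding sign of the power), and the fact that the isoptic property guarantees $\theta$ is literally the same angle for every triad circle, not merely that the circles are "seen similarly." I would therefore keep the proof short: state that by Theorem~\ref{thm: Isoptic property} the isoptic angle $\theta$ is common to all $o_i$, do the right-triangle computation \eqref{eq:power-outside} in the exterior case (and note the interior case is analogous with a sign change), and conclude $h_i = cR_i^2$ where $c = \cot^2(\theta/2)$ (resp.\ $c = -\sin^2(\theta/2)$) depends only on $W$ and the quadrilateral, not on $i$.
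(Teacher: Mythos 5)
Your proof is correct and follows exactly the route the paper intends: the paper states the corollary as an immediate consequence of Theorem~\ref{thm: Isoptic property} without writing out the computation, and your right-triangle argument (giving $h_i = R_i^2\cot^2(\theta/2)$ in the exterior case and $h_i = -R_i^2\sin^2(\theta/2)$ in the interior case, with the common isoptic angle $\theta$ making the constant independent of $i$) is precisely the omitted verification. The only remark worth adding is that both cases reduce to the single observation that the isoptic property is equivalent to $|WO_i|/R_i$ being constant in $i$ (which also follows directly from $W\in CS(o_i,o_j)$), whence $h_i=|WO_i|^2-R_i^2=(c^2-1)R_i^2$ at once.
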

\textcolor{black}{This property of the isoptic point was shown by
Neville in \cite{Neville} using tetracyclic coordinates and the Darboux-Frobenius
identity. }

Let $a,b,c,d$ be sides of the quadrilateral. For any $x\in\{a,b,c,d\}$,
let $F_{x}$ be the foot of the perpendicular bisector of side $x$
on the opposite side. (E.g., $F_{a}$ is the intersection of the perpendicular
bisector to the side $AB$ and the side $CD$). \textcolor{black}{The
following corollary follows from Lemma \ref{lem:H14 of B is C} and
expresses $W$ as the point of intersection of several circles going
through the vertices of the first and second generation quadrilaterals,
as well as the intersections of the perpendicular bisectors of the
original quadrilateral with the opposite sides (see Fig. \ref{fig:W and feet circles}).}

\begin{figure}
\textcolor{black}{\includegraphics[scale=0.2]{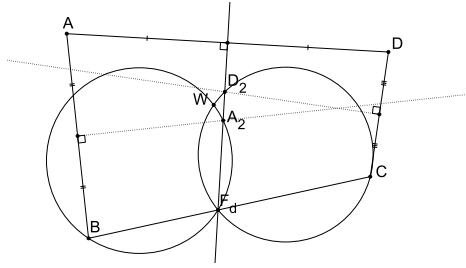}\caption{\textcolor{black}{\label{fig:W and feet circles}$W$ as the intersection
of circles $(B_{1}F_{d}A_{2})$ and $(C_{1}F_{d}D_{2})$ in (\ref{eq:W and feet circles}).}}
}

\end{figure}

\begin{cor}
\label{cor:feetCircles}
\begin{equation}
W=(A_{1}F_{b}B_{2})\cap(A_{1}F_{c}D_{2})\cap(B_{1}F_{c}C_{2})\cap(B_{1}F_{d}A_{2})\cap(C_{1}F_{d}D_{2})\cap(C_{1}F_{a}B_{2})\cap(D_{1}F_{a}A_{2})\cap(D_{1}F_{b}C_{2}),\label{eq:W and feet circles}
\end{equation}
\textup{ This property can be viewed as the generalization of the
following property of the circumcenter of a triangle:}
\end{cor}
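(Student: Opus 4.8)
The plan is to read the identity (\ref{eq:W and feet circles}) off directly from Lemma \ref{lem:H14 of B is C}, using the joint-point description of the center of a spiral similarity recalled in Section \ref{sub:CS, MS, RA.}. I would organize the eight circles into four pairs, one pair per side of $Q^{(1)}$. Fix a side, say $d=D_{1}A_{1}$, whose opposite side is $b=B_{1}C_{1}$. The two triad circles passing through the chord $D_{1}A_{1}$ are $o_{1}=(D_{1}A_{1}B_{1})$ and $o_{4}=(C_{1}D_{1}A_{1})$; their centers $A_{2}$ and $D_{2}$ are the circumcenters of $\triangle D_{1}A_{1}B_{1}$ and $\triangle C_{1}D_{1}A_{1}$, hence both lie on the perpendicular bisector of $D_{1}A_{1}$, so the line of centers $A_{2}D_{2}$ \emph{is} that perpendicular bisector. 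The two vertices of $o_{1}$ and $o_{4}$ that do not lie on the shared chord are $B_{1}$ and $C_{1}$, and the line $B_{1}C_{1}$ is precisely the opposite side $b$.

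Next I would invoke Lemma \ref{lem:H14 of B is C}: the spiral similarity $H_{1,4}^{W}$ centered at $W$ satisfies $H_{1,4}^{W}(A_{2})=D_{2}$ (a spiral similarity carries center to center) and $H_{1,4}^{W}(B_{1})=C_{1}$. Applying the joint-point construction to the two pairs $A_{2}\mapsto D_{2}$ and $B_{1}\mapsto C_{1}$, the joint point is $P=A_{2}D_{2}\cap B_{1}C_{1}$, which by the previous paragraph is exactly $F_{d}$, the foot of the perpendicular bisector of $d$ on the opposite side. Therefore the center of the spiral similarity is recovered as $W=(A_{2}B_{1}F_{d})\cap(D_{2}C_{1}F_{d})$, and in particular $W$ lies on both circles $(B_{1}F_{d}A_{2})$ and $(C_{1}F_{d}D_{2})$.

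Then I would run the identical argument for the remaining three sides, using $H_{1,2}^{W}(D_{1})=C_{1}$ (again from Lemma \ref{lem:H14 of B is C}) for the side $a$, and the cyclic analogues $H_{2,3}^{W}(A_{1})=D_{1}$ and $H_{3,4}^{W}(B_{1})=A_{1}$ for the sides $b$ and $c$. In each case the joint point of the relevant spiral similarity is the corresponding foot $F_{x}$, and this produces the other six circles of (\ref{eq:W and feet circles}), each passing through $W$. (The two pairs of triad circles that share a \emph{diagonal} rather than a side give the analogous circles through the feet of the perpendicular bisectors of the diagonals, but those do not enter this particular formula.) Since for each side $x$ the associated pair of circles meets at $\{F_{x},W\}$, and the feet $F_{a},F_{b},F_{c},F_{d}$ are distinct for a nondegenerate $Q^{(1)}$, the common intersection of all eight circles is the single point $W$, which is the assertion.

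The one step that needs care is the geometric identification of the joint point with $F_{x}$: namely, that the line of centers of the two triad circles through a given side is the perpendicular bisector of that side, while the line joining their two ``third'' vertices is the opposite side, and that for a nondegenerate quadrilateral these two lines do intersect, so the joint point (and hence the eight circles) are well defined. Once this is granted, the corollary is an immediate consequence of Lemma \ref{lem:H14 of B is C} and the standard construction of the center of a spiral similarity, with no further computation required.
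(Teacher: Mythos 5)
Your proposal is correct and is essentially the argument the paper intends: the paper merely asserts that the corollary ``follows from Lemma \ref{lem:H14 of B is C},'' and you have filled in exactly the intended reasoning, namely that for each side the spiral similarity $H_{i,j}^{W}$ sends center to center and third vertex to third vertex, its joint point is the corresponding foot $F_{x}$, and the standard construction of the center of a spiral similarity then exhibits $W$ as the second intersection of the two circles through $F_{x}$. Your identification of the line of centers with the perpendicular bisector of the shared chord, and of the line of third vertices with the opposite side, matches the paper's own construction of $W$ from $(B_{1}F_{d}A_{2})\cap(C_{1}F_{d}D_{2})$.
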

\emph{Given a triangle $\triangle ABC$ with sides $a,b,c$ opposite
to vertices $A,B,C$, let $F_{kl}$ denote the feet of the perpendicular
bisector to side $k$ on the side $l$ (or its extension), where $k,l\in\{a,b,c\}$.
Then the circumcenter is the common point of three circles going through
vertices and feet of the perpendicular bisectors in the following
way:}%
\footnote{Note also that this statement is related to Miquel's theorem as follows.
Take any three points $P,Q,R$ on the three circles in (\ref{eq:O and feet circles}),
so that $A,B,C$ are points on the sides $PQ$, $QR$, $PQ$ of $\triangle PQR$.
Then the statement becomes Miquel's theorem for $\triangle PQR$ and
points $A,B,C$ on its sides, with the extra condition that the point
of intersection of the circles $(PAC)$, $(QAB)$, $(RBC)$ is the
circumcenter of $\triangle ABC$.%
}\emph{
\begin{equation}
O=(ABF_{ab}F_{ba})\cap(BCF_{bc}F_{cb})\cap(CAF_{ca}F_{ac}).\label{eq:O and feet circles}
\end{equation}
}

\begin{figure}
\includegraphics[scale=0.2]{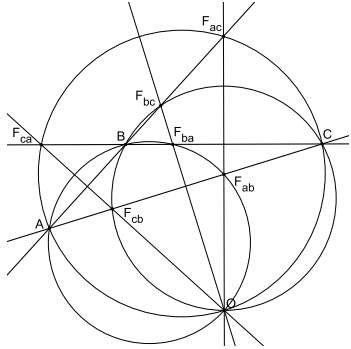}

\caption{Circumcenter as intersection of circles in (\ref{eq:O and feet circles}).}

\end{figure}

The similarity between (\ref{eq:O and feet circles}) and (\ref{eq:W and feet circles})
supports the analogy of the isoptic point with the circumcenter. 

The last corollary provides a quick way of constructing $W$. First,
construct two vertices (e.g., $A_{2}$ and $D_{2}$) of the second
generation by intersecting the perpendicular bisectors. Let $F_{d}$
be the intersection of the lines $A_{2}D_{2}$ and $B_{1}C_{1}$.
Then $W$ is obtained as the second point of intersection of the two
circles $(B_{1}F_{b}A_{2})$ and $(C_{1}F_{b}D_{2})$. 

Recall the definition of isodynamic points of a triangle. Let $\triangle A_{1}A_{2}A_{3}$
be a triangle with sides $a_{1},a_{2},a_{3}$ opposite to the vertices
$A_{1},A_{2},A_{3}$. For each $i,j\in\{1,2,3\}$, where $i\neq j$,
consider the circle $o_{ij}$ centered at $A_{i}$ and going through
$A_{j}$. The circle of similitude $CS(o_{ij},o_{kj})$ of two distinct
circles $o_{ij}$ and $o_{kj}$ is the Apollonian circle with respect
to points $A_{i},A_{k}$ with ratio $r_{ik}=\frac{a_{k}}{a_{i}}$.
It is easy to see that the three Apollonian circles intersect in two
points, $S$ and $S'$, which are called the \emph{isodynamic points
}of the triangle. 

Here are some properties of isodynamic points (see, e.g., \cite{Johnson-book},
\cite{Coxeter} for more details): 
\begin{enumerate}
\item The distances from $S$ (and $S'$) to the vertices are inversely
proportional to the opposite side lengths:
\begin{equation}
|SA_{1}|:|SA_{2}|:|SA_{3}|=\frac{1}{a_{1}}:\frac{1}{a_{2}}:\frac{1}{a_{3}}.\label{eq:isodynamic of triangle}
\end{equation}
 Equivalently,
\[
|SA_{i}|:|SA_{j}|=\sin\alpha_{j}:\sin\alpha_{i},\qquad i\neq j\in\{1,2,3\},
\]
where $\alpha_{i}$ is the angle $\angle A_{i}$ in the triangle.
The isodynamic points can be characterized as the points having this
distance property. Note that since the radii of the circles used to
define the circles of similitude are the sides, the last property
means that distances from isodynamic points to the vertices are inversely
proportional to the radii of the circles. 
\item The pedal triangle of a point on the plane of $\triangle A_{1}A_{2}A_{3}$
is equilateral if and only if the point is one of the isodynamic points.
\item The triangle whose vertices are obtained by inversion of $A_{1},A_{2},A_{3}$
with respect to a circle centered at a point $P$ is equilateral if
and only if $P$ is one of the isodynamic points of $\triangle A_{1}A_{2}A_{3}$. 
\end{enumerate}
\textcolor{black}{It turns out that $W$ has properties (Corollary
\ref{cor:Isodynamic property}, Theorem \ref{thm:Pedal of W}, Theorem
\ref{thm:Varignon Angles}) similar to properties 1--3 of $S$. }
\begin{cor}
\textcolor{black}{(Isodynamic property of $W$) \label{cor:Isodynamic property}The
distances from $W$ to the vertices of the quadrilateral are inversely
proportional to the radii of the triad-circles going through the remaining
three vertices:
\[
|WA_{1}|:|WB_{1}|:|WC_{1}|:|WD_{1}|=\frac{1}{R_{3}}:\frac{1}{R_{4}}:\frac{1}{R_{1}}:\frac{1}{R_{2}},
\]
where $R_{i}$ is the radius of the triad-circle $o_{i}$. Equivalently,
the ratios of the distances from $W$ to the vertices are as follows:
\begin{eqnarray*}
|WA_{1}|:|WB_{1}| & = & |A_{1}C_{1}|\sin\gamma:|B_{1}D_{1}|\sin\delta,\\
|WA_{1}|:|WC_{1}| & = & \sin\gamma:\sin\alpha,\\
|WB_{1}|:|WD_{1}| & = & \sin\delta:\sin\beta.
\end{eqnarray*}
}
\end{cor}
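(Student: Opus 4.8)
The plan is to read the distance ratios directly off the spiral similarities $H_{i,j}^{W}\colon o_{i}\to o_{j}$, whose common center is $W$ by Theorem~\ref{thm:W on CSes}. Recall that a spiral similarity centered on $CS(o_{i},o_{j})$ and taking $o_{i}$ to $o_{j}$ has ratio $R_{j}/R_{i}$, so that whenever $H_{i,j}^{W}(P)=Q$ with $P,Q\neq W$ we have $|WQ|=\frac{R_{j}}{R_{i}}|WP|$. Any two triad circles of $Q^{(1)}$ share exactly two vertices, and Lemma~\ref{lem:H14 of B is C} (together with the identical argument for every such pair) says that $H_{i,j}^{W}$ carries the vertex of $o_{i}$ not lying on $o_{j}$ to the vertex of $o_{j}$ not lying on $o_{i}$. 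Applying this with $i=1$ and $j=2,3,4$ — the vertex of $o_{1}=(D_{1}A_{1}B_{1})$ missing from $o_{2},o_{3},o_{4}$ being $D_{1},A_{1},B_{1}$ respectively, and the vertex of each of $o_{2},o_{3},o_{4}$ missing from $o_{1}$ being $C_{1}$ — I get $H_{1,2}^{W}(D_{1})=C_{1}$, $H_{1,3}^{W}(A_{1})=C_{1}$, $H_{1,4}^{W}(B_{1})=C_{1}$, hence
\[
|WC_{1}|=\frac{R_{2}}{R_{1}}|WD_{1}|=\frac{R_{3}}{R_{1}}|WA_{1}|=\frac{R_{4}}{R_{1}}|WB_{1}|.
\]
Rearranging gives $|WA_{1}|:|WB_{1}|:|WC_{1}|:|WD_{1}|=\frac{1}{R_{3}}:\frac{1}{R_{4}}:\frac{1}{R_{1}}:\frac{1}{R_{2}}$.

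For the equivalent formulation in terms of angles and diagonals I would rewrite each $R_{i}$ via the extended law of sines applied to the triad triangle. Each diagonal of $Q^{(1)}$ is a common chord of exactly two triad circles, and it subtends at the two off-diagonal vertices precisely the quadrilateral's angles there: $B_{1}D_{1}$ subtends $\alpha$ in $o_{1}$ and $\gamma$ in $o_{3}$, while $A_{1}C_{1}$ subtends $\beta$ in $o_{2}$ and $\delta$ in $o_{4}$. Therefore
\[
R_{1}=\frac{|B_{1}D_{1}|}{2\sin\alpha},\quad R_{3}=\frac{|B_{1}D_{1}|}{2\sin\gamma},\quad R_{2}=\frac{|A_{1}C_{1}|}{2\sin\beta},\quad R_{4}=\frac{|A_{1}C_{1}|}{2\sin\delta},
\]
and substituting these into the proportion above yields $|WA_{1}|:|WC_{1}|=\sin\gamma:\sin\alpha$, $|WB_{1}|:|WD_{1}|=\sin\delta:\sin\beta$, and $|WA_{1}|:|WB_{1}|=|A_{1}C_{1}|\sin\gamma:|B_{1}D_{1}|\sin\delta$.

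The only step demanding real care is keeping the \emph{orientation} of each spiral similarity straight, so that the factors come out as $R_{j}/R_{i}$ and not their reciprocals — inverting one of them would flip a proportion — and making sure the ``missing-vertex'' dictionary of Lemma~\ref{lem:H14 of B is C} transports correctly to the pair $o_{1},o_{3}$ (which the lemma does not list explicitly). One should also observe that since $W$ lies on all six circles of similitude, were it equal to a vertex $V$ then $V$, already on the three triad circles through it, would be forced onto the fourth as well, making $Q^{(1)}$ cyclic; hence $W$ is distinct from all four vertices and every ratio above is genuinely defined. Everything after these checks is pure substitution.
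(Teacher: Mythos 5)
Your proposal is correct and follows the route the paper intends: the corollary is stated there without an explicit argument, as a consequence of Theorem \ref{thm:W on CSes} and the fact that each $H_{i,j}^{W}$ has ratio $R_{j}/R_{i}$ and sends the ``third'' vertex of $o_{i}$ to the ``third'' vertex of $o_{j}$ (Lemma \ref{lem:H14 of B is C}), which is exactly what you use. Your extension of the lemma to the pair $(o_{1},o_{3})$ by relabeling, the law-of-sines conversion of the $R_{i}$, and the check that $W$ is not a vertex are all valid fillings-in of details the paper leaves implicit.
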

\textcolor{black}{From analysis of similar triangles in the iterative
process, it is easy to see that the limit point of the process satisfies
the above distance relations. Therefore, $W$ (defined at the beginning
of this section as the second point of intersection of $CS(o_{1},o_{2})$
and $CS(o_{1},o_{4})$) is the limit point of the iterative process.}

One more property expresses $W$ as the image of a vertex of the first
generation under the inversion in a triad circle of the second generation.
Namely, we have the following
\begin{thm}
(Inversive property of W) \label{thm:Inversive Property of W}
\begin{equation}
W=\mbox{Inv}_{o_{1}^{(2)}}(A_{1})=\mbox{Inv}_{o_{2}^{(2)}}(B_{1})=\mbox{Inv}_{o_{3}^{(2)}}(C_{1})=\mbox{Inv}_{o_{4}^{(2)}}(D_{1}).\label{eq:inversive property or W}
\end{equation}

\begin{figure}
\includegraphics[scale=0.22]{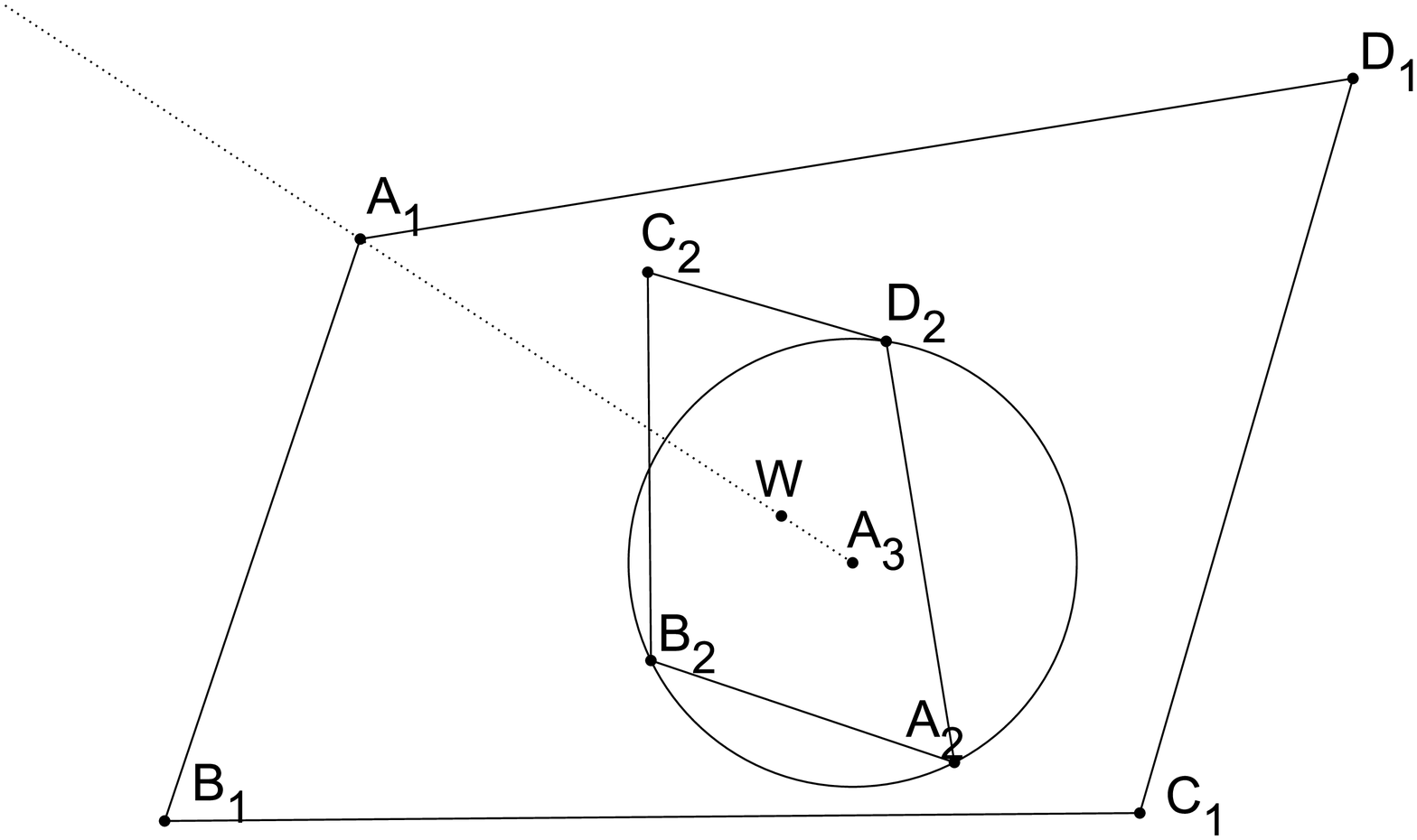}

\caption{Inversive property of $W$}
\end{figure}
\end{thm}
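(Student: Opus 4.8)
Here is how I would approach this, using essentially only Theorem~\ref{thm:W on CSes} and two standard facts from inversive geometry, so that no computation is needed at all.

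The starting observation is that the vertex $A_1$ lies on exactly the same circles of similitude through which $W$ passes. Indeed $A_1\in o_1\cap o_2\cap o_4$, so $|A_1O_i|=R_i$ for $i\in\{1,2,4\}$, where $O_i$ and $R_i$ denote the center and radius of $o_i$; hence $|A_1O_i|/|A_1O_j|=R_i/R_j$, which places $A_1$ on the Apollonian circle $CS(o_i,o_j)$ for every pair $i,j\in\{1,2,4\}$. By Theorem~\ref{thm:W on CSes} the point $W$ lies on all of these as well, and $W\neq A_1$ since $W$ is not on any triad circle (Theorem~\ref{thm: Isoptic property}) while $A_1\in o_1$. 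Thus $CS(o_1,o_2)$ and $CS(o_1,o_4)$ are two distinct circles meeting precisely in $\{A_1,W\}$ --- which is, in fact, exactly how $W$ was defined.

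Next I would bring in the second generation. Since $A_2,B_2,D_2$ are the circumcenters of $o_1,o_2,o_4$, we have $A_2=O_1$, $B_2=O_2$, $D_2=O_4$, so $o_1^{(2)}=(D_2A_2B_2)$ is the circle through $O_1$, $O_2$ and $O_4$. By property~\ref{enu:CS Inversion exchanges centers}, $O_1$ and $O_2$ are a pair of mutually inverse points with respect to $CS(o_1,o_2)$, and $O_1,O_4$ are mutually inverse with respect to $CS(o_1,o_4)$. Now a circle passing through a pair of points that are inverse in a circle $\omega$ is orthogonal to $\omega$ (the power of the center of $\omega$ with respect to such a circle equals the square of the radius of $\omega$); since $o_1^{(2)}$ contains $O_1,O_2$ and also $O_1,O_4$, it follows that $o_1^{(2)}$ is orthogonal both to $CS(o_1,o_2)$ and to $CS(o_1,o_4)$.

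To finish, I would use that orthogonal circles are invariant under inversion in one another: $\mbox{Inv}_{o_1^{(2)}}$ maps each of $CS(o_1,o_2)$ and $CS(o_1,o_4)$ to itself, hence fixes their intersection $\{A_1,W\}$ as a set. Since $A_1\notin o_1^{(2)}$ (otherwise $\mbox{Inv}_{o_1^{(2)}}$ would fix $A_1$, hence also $W$, forcing $W\in o_1^{(2)}$, which contradicts the isoptic property applied to $Q^{(2)}$), the inversion cannot fix $A_1$ individually, so it must interchange $A_1$ and $W$; that is, $W=\mbox{Inv}_{o_1^{(2)}}(A_1)$. The other three equalities then follow by the cyclic relabeling $A_1\to B_1\to C_1\to D_1\to A_1$, under which $o_i\mapsto o_{i+1}$, $o_i^{(2)}\mapsto o_{i+1}^{(2)}$ and $W$ is unchanged. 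I do not expect any serious obstacle in this argument; the only delicate point is dispatching the degenerate configurations (two of the relevant circles of similitude coinciding, or $A_1$ lying on $o_1^{(2)}$), which are excluded by the standing nondegeneracy hypotheses, and the real crux is simply the observation that $o_1^{(2)}$ passes through the three centers $O_1,O_2,O_4$ and is therefore orthogonal to the two circles of similitude on which both $A_1$ and $W$ lie.
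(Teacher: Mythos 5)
Your proof is correct, and it takes a genuinely different route from the paper's. The paper proves the first equality by inverting in an auxiliary circle centered at $A_{1}$: the three circles of similitude $CS(o_{1},o_{2})$, $CS(o_{1},o_{4})$, $CS(o_{2},o_{4})$, all of which pass through $A_{1}$, become the perpendicular bisectors of the sides of $\triangle D_{2}'A_{2}'B_{2}'$, so $W'$ is the circumcenter of that triangle, i.e. $W'$ and $A_{1}'=\infty$ are inverse in $o_{1}^{(2)\prime}$; inverting back gives the claim. You instead stay in the original picture and use the orthogonality of $o_{1}^{(2)}$ to $CS(o_{1},o_{2})$ and $CS(o_{1},o_{4})$, which follows from property \ref{enu:CS Inversion exchanges centers} because $o_{1}^{(2)}$ passes through the mutually inverse pairs $O_{1},O_{2}$ and $O_{1},O_{4}$; since inversion in $o_{1}^{(2)}$ then preserves both circles of similitude, it permutes their intersection $\{A_{1},W\}$. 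Both arguments lean on Theorem \ref{thm:W on CSes} and on the same Apollonian description of $CS$, but yours avoids the auxiliary inversion entirely and makes transparent \emph{why} the statement holds: $o_{1}^{(2)}$ is a member of the coaxal pencil conjugate to the pencil of circles through $A_{1}$ and $W$. One small improvement: your final case analysis (ruling out $A_{1}\in o_{1}^{(2)}$ via the isoptic property of $Q^{(2)}$, which is a slightly awkward forward reference) is unnecessary. Since $o_{1}^{(2)}$, with center $Z$ and radius $\rho$, is orthogonal to two distinct circles through $A_{1}$ and $W$, the point $Z$ lies on their radical axis, which is the line $A_{1}W$, and its power with respect to either circle is $\overline{ZA_{1}}\cdot\overline{ZW}=\rho^{2}>0$; this identity \emph{is} the statement $W=\mbox{Inv}_{o_{1}^{(2)}}(A_{1})$ and simultaneously shows that neither $A_{1}$ nor $W$ can lie on $o_{1}^{(2)}$ (given $A_{1}\neq W$), so no degenerate case needs separate treatment.
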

\begin{proof}
To prove the first equality, perform inversion in a circle centered
at $A_{1}$. The image of a point under the inversion will be denoted
by the same letter with a prime. The images of the circles of similitude
$CS(o_{1},o_{2})$, $CS(o_{4},o_{1})$ and $CS(o_{2},o_{4})$ are
the perpendicular bisectors of the segments $A_{2}'B_{2}'$, $D_{2}'A_{2}'$
and $B_{2}'D_{2}'$ respectively. By Theorem \ref{thm:W on CSes},
these perpendicular bisectors intersect in $W'$. Since $W'$ is the
circumcenter of $\triangle D_{2}'A_{2}'B_{2}'$, it follows that $\mbox{Inv}_{o_{1}^{(2)'}}(W')=A_{1}'$.
Inverting back in the same circle centered at $A_{1}$, we obtain
$\mbox{Inv}_{o_{1}^{(2)}}(W)=A_{1}$. The rest of the statements follow
analogously. 
\end{proof}
The fact that the inversions of each of the vertices in triad circles
defined by the remaining three vertices coincide in one point was
proved by Parry and Longuet-Higgins in \cite{Reflections 3}. 

Notice that the statement of Theorem \ref{thm:Inversive Property of W}
can be rephrased in a way that does not refer to the original quadrilateral,
so that we can obtain a property of circumcenters of four triangles
taking a special configuration on the plane. Recall that an inversion
takes a pair of points which are inverses of each other with respect
to a (different) circle into a pair of points which are inverses of
each other with respect to the image of the circle, that is if $S=\mbox{Inv}_{k}(T)$,
then $S'=\mbox{Inv}_{k'}(T')$, where $\prime$ denotes the image
of a point (or a circle) under inversion in a given circle. Using
this and property \ref{enu:CS Inversion exchanges centers} of circles
of similitude, we obtain the corollary below. In the statement, $A,B,C,P,X,Y,Z,O$
play the role of $A_{2}',B_{2}',D_{2}',A_{1},B_{1}',C_{1}',D_{1}',W_{1}'$
in Theorem \ref{thm:Inversive Property of W}. 
\begin{cor}
\label{cor: green pink triangles thm}Let $P$ be a point on the plane
of $\triangle ABC$. Let points $O$, $X$, $Y$ and $Z$ be the circumcenters
of $\triangle ABC$, $\triangle APB$, $\triangle BPC$ $ $ and $\triangle CPA$
respectively. \textup{Then} 
\begin{equation}
\mbox{Inv}_{(ZOX)}(A)=\mbox{Inv}_{(XOY)}(B)=\mbox{Inv}_{(YOZ)}(C)=\mbox{Inv}_{(XYZ)}(P).\label{eq:green pink triangles}
\end{equation}
Furthermore,
\[
Iso{}_{\triangle ZOX}(A)=Y,\quad Iso{}_{\triangle XOY}(B)=Z,\quad Iso_{\triangle YOZ}(C)=X.
\]

\begin{figure}[H]
\includegraphics[scale=0.05]{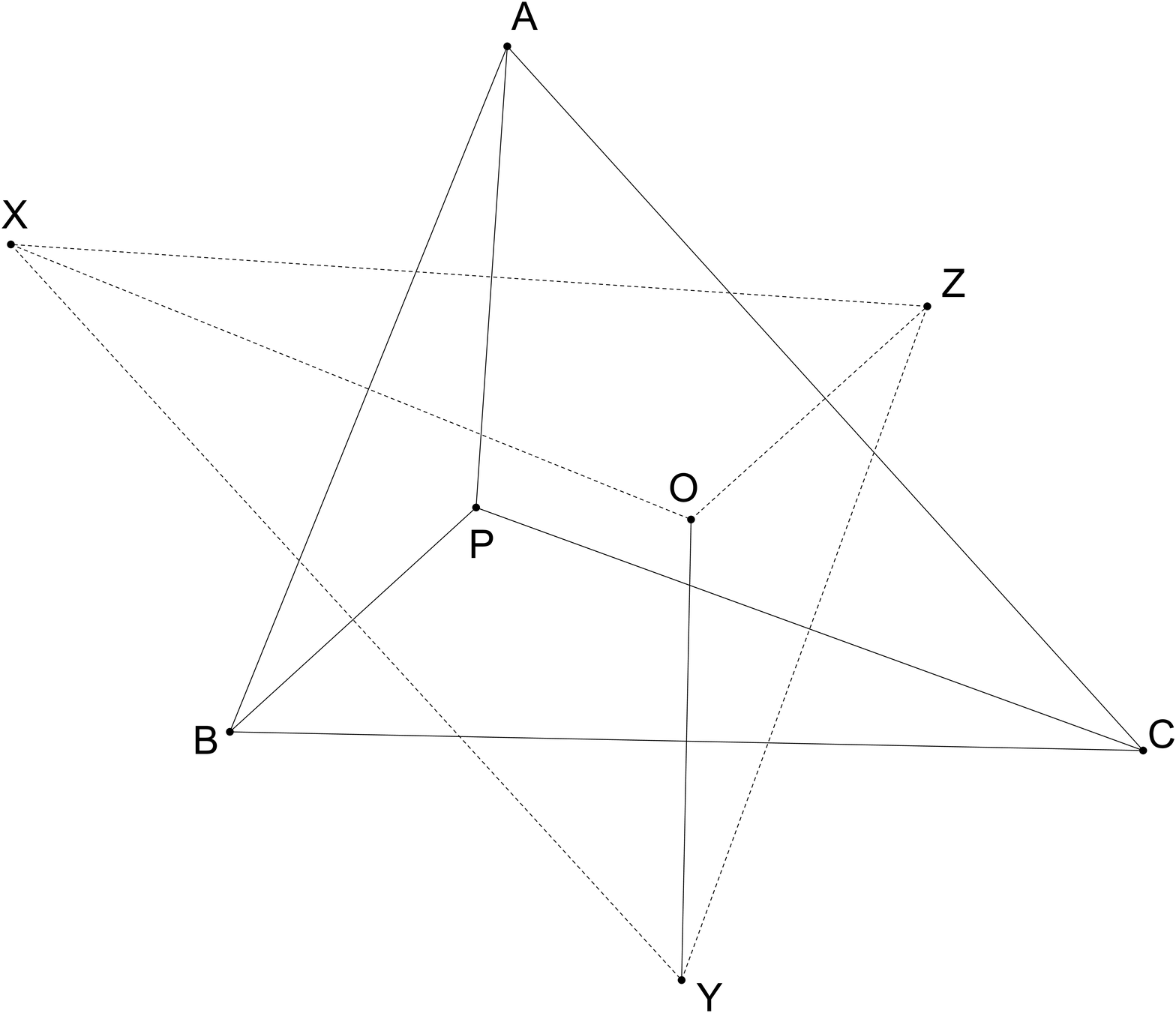}

\caption{Theorem \ref{cor: green pink triangles thm}.}
\end{figure}

\end{cor}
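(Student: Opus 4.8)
The plan is to recognize the quadruple $(X,O,Y,Z)$ as an instance of the second‑generation quadrilateral $Q^{(2)}$ of Sections \ref{sec: Iterative Process}--\ref{sec: W}, and then read the corollary off Theorems \ref{thm:Reversing using Isogonal Conjugation} and \ref{thm:Inversive Property of W}. Concretely, I would put $Q^{(1)}=A_1B_1C_1D_1$ with $A_1=A$, $B_1=B$, $C_1=C$, $D_1=P$. This is a nondegenerate noncyclic quadrilateral exactly under the (implicit) standing hypotheses that no three of $A,B,C,P$ are collinear and that $P$ does not lie on the circumcircle of $\triangle ABC$ --- precisely the case in which $O,X,Y,Z$ are four distinct finite points and $(ZOX)$, $(XOY)$, $(YOZ)$, $(XYZ)$ are honest circles. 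The four triad triangles of this $Q^{(1)}$ are $T_1=\triangle PAB$, $T_2=\triangle ABC$, $T_3=\triangle BCP$, $T_4=\triangle CPA$, with circumcenters $X$, $O$, $Y$, $Z$; hence $Q^{(2)}=A_2B_2C_2D_2$ is literally the quadrilateral $XOYZ$ taken in that cyclic order.

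With this dictionary the first chain of equalities is exactly Theorem \ref{thm:Inversive Property of W} applied to $Q^{(1)}=ABCP$: the second‑generation triad circles are $o_1^{(2)}=(D_2A_2B_2)=(ZXO)$, $o_2^{(2)}=(A_2B_2C_2)=(XOY)$, $o_3^{(2)}=(B_2C_2D_2)=(OYZ)$, $o_4^{(2)}=(C_2D_2A_2)=(YZX)$, and a circle through three points does not care in which order they are listed, so $(ZXO)=(ZOX)$, $(OYZ)=(YOZ)$, $(YZX)=(XYZ)$. Substituting $A_1=A$, $B_1=B$, $C_1=C$, $D_1=P$ into $W=\mbox{Inv}_{o_1^{(2)}}(A_1)=\cdots=\mbox{Inv}_{o_4^{(2)}}(D_1)$ then gives precisely $\mbox{Inv}_{(ZOX)}(A)=\mbox{Inv}_{(XOY)}(B)=\mbox{Inv}_{(YOZ)}(C)=\mbox{Inv}_{(XYZ)}(P)$, the common value being the isoptic point $W$ of $ABCP$.

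For the isogonal‑conjugation identities I would quote Theorem \ref{thm:Reversing using Isogonal Conjugation} for the same $Q^{(1)}=ABCP$, which reads $A=\mbox{Iso}_{\triangle D_2A_2B_2}(C_2)=\mbox{Iso}_{\triangle ZXO}(Y)$, $B=\mbox{Iso}_{\triangle A_2B_2C_2}(D_2)=\mbox{Iso}_{\triangle XOY}(Z)$, $C=\mbox{Iso}_{\triangle B_2C_2D_2}(A_2)=\mbox{Iso}_{\triangle OYZ}(X)$. Since isogonal conjugation with respect to a fixed triangle is an involution and is insensitive to the order of the triangle's vertices, applying it to both sides of each relation turns these into $\mbox{Iso}_{\triangle ZOX}(A)=Y$, $\mbox{Iso}_{\triangle XOY}(B)=Z$, $\mbox{Iso}_{\triangle YOZ}(C)=X$, which is the stated triple; the fourth relation $D_1=\mbox{Iso}_{\triangle C_2D_2A_2}(B_2)$ analogously yields the (unstated) identity $\mbox{Iso}_{\triangle XYZ}(O)=P$.

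I do not expect a genuine obstacle here: the whole mathematical content is already in the two cited theorems, so the only real work is getting the cyclic‑order correspondence $A_1B_1C_1D_1\leftrightarrow(\mbox{triad triangles})\leftrightarrow XOYZ$ exactly right and keeping straight that $(ZXO)$ and $(ZOX)$, or $\triangle ZXO$ and $\triangle ZOX$, denote the same object. The one point deserving explicit mention is degeneracy --- if $P$ lies on the circumcircle of $\triangle ABC$ then $Q^{(1)}$ is cyclic, so by Theorem \ref{thm:Inverse process} $O=X=Y=Z$ and the statement is vacuous, and if three of the four points are collinear one of $O,X,Y,Z$ runs off to infinity --- so these should be excluded by hypothesis. (An alternative, which is what the discussion preceding the corollary is arranged for, is to derive the statement by inverting the whole configuration of Theorem \ref{thm:Inversive Property of W} in an auxiliary circle, using that an inversion sends a mutually‑inverse pair of points to a mutually‑inverse pair and, via property \ref{enu:CS Inversion exchanges centers}, that it turns the circles of similitude into the relevant circumcircles; but the direct identification above is shorter and spares one from chasing images of circles of similitude.)
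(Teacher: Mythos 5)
Your proof is correct, but it takes a genuinely different (and more direct) route than the paper. The paper derives the corollary by inverting the configuration of Theorem \ref{thm:Inversive Property of W} in a circle centered at a first-generation vertex: in its dictionary $A,B,C,P,X,Y,Z,O$ are the images $A_{2}',B_{2}',D_{2}',A_{1},B_{1}',C_{1}',D_{1}',W'$, so that $P$ is the center of the auxiliary inversion, $O$ is the image of the isoptic point, the circles $(ZOX),(XOY),(YOZ)$ arise as images of circles of similitude, and the coincidence of the four inverses follows from property \ref{enu:CS Inversion exchanges centers} combined with the fact that inversion carries mutually inverse pairs to mutually inverse pairs (the common point being the image of the remaining second-generation vertex $C_{2}$). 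You instead observe that the corollary's configuration literally is the configuration of Theorem \ref{thm:Inversive Property of W} applied to $Q^{(1)}=ABCP$: with $A_{1}=A$, $B_{1}=B$, $C_{1}=C$, $D_{1}=P$ one has $Q^{(2)}=XOYZ$, the four circles in (\ref{eq:green pink triangles}) are exactly $o_{1}^{(2)},\dots,o_{4}^{(2)}$, and the common value of the four inversions is the isoptic point $W$ of $ABCP$; the isogonal identities then come from Theorem \ref{thm:Reversing using Isogonal Conjugation} together with the involutivity of isogonal conjugation. Your identification buys a cleaner statement (it names the common point as the isoptic point of $ABCP$), yields the unstated fourth identity $\mbox{Iso}_{\triangle XYZ}(O)=P$, and makes the nondegeneracy hypotheses ($P$ off the circumcircle of $\triangle ABC$, no three of the four points collinear) explicit, whereas the paper's inversive rephrasing requires chasing images of circles of similitude. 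Both arguments rest on the same two theorems, so the mathematical content is equivalent; the difference is only in how the corollary's data are matched to the quadrilateral machinery.
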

\textcolor{black}{Combining the description of the reverse iterative
process (Theorem \ref{thm:Reversing using Isogonal Conjugation})
}and the inversive property of $W$ (Theorem \ref{thm:Inversive Property of W}),
we obtain one more direct way of constructing $W$ without having
to refer to the iterative process:
\begin{thm}
\label{thm:Inverse-Isogonal System}Let $A,B,C,D$ be four points
in general position. Then
\[
W=\mbox{Inv}_{o_{3}}\circ Iso_{T_{3}}(A_{1})=\mbox{Inv}_{o_{4}}\circ Iso_{T_{4}}(B_{1})=\mbox{Inv}_{o_{1}}\circ Iso_{T_{1}}(C_{1})=\mbox{Inv}_{o_{2}}\circ Iso_{T_{2}}(D_{1}),
\]
where $o_{i}$ is the $i$\textup{th} triad circle, and $T_{i}$ is
the $i$\textup{th} triad triangle. 
\end{thm}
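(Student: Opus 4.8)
The plan is to combine the two reconstruction results we already have. Theorem \ref{thm:Reversing using Isogonal Conjugation} tells us how to recover the first-generation vertices from the second-generation ones by isogonal conjugation in the appropriate triad triangles of $Q^{(2)}$: for instance $A_1 = \mathrm{Iso}_{\triangle D_2 A_2 B_2}(C_2)$. Theorem \ref{thm:Inversive Property of W} tells us that $W$ is the inverse of $A_1$ in the second-generation triad circle $o_1^{(2)} = (D_2 A_2 B_2)$, i.e. $W = \mathrm{Inv}_{o_1^{(2)}}(A_1)$, and similarly for the other three vertices. The idea is to chain these two maps. But naively composing them runs the map through the \emph{second} generation, which is exactly what we want to avoid: the goal of Theorem \ref{thm:Inverse-Isogonal System} is to express $W$ directly in terms of $A,B,C,D$ and their (first-generation) triad circles $o_i$ and triad triangles $T_i$.

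First I would observe that the two reconstruction theorems are ``generation-shifted'' versions of one another. Theorem \ref{thm:Reversing using Isogonal Conjugation} relates $Q^{(1)}$ to $Q^{(2)}$; applied one step earlier it relates $Q^{(0)}$ to $Q^{(1)}$, but more usefully, the inversive property applied to $Q^{(1)}$ rather than reading ``$W$ via second generation'' should be read as: $W$ is obtained from a vertex of $Q^{(1)}$ by inverting in a circle built from $Q^{(2)}$, and $Q^{(2)}$'s vertices are circumcenters of the triad triangles of $Q^{(1)}$. Concretely, $o_1^{(2)} = (D_2 A_2 B_2)$ is the circle through the circumcenters of $T_2 = \triangle A_1 B_1 C_1$, $T_3 = \triangle B_1 C_1 D_1$ — wait, one must pin down the indexing carefully: $A_2, B_2, C_2, D_2$ are the circumcenters of $T_1, T_2, T_3, T_4$ respectively (with $T_1 = \triangle D_1A_1B_1$, etc., as fixed in the excerpt), so $o_1^{(2)} = (D_2 A_2 B_2)$ is the circle through the circumcenters of $T_4, T_1, T_2$. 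The three triangles $T_4, T_1, T_2$ are precisely the three triad triangles of $Q^{(1)}$ that contain the vertex $A_1$ (they are $\triangle C_1D_1A_1$, $\triangle D_1A_1B_1$, $\triangle A_1B_1C_1$). So $o_1^{(2)}$ is the circle through the three circumcenters of the triad triangles through $A_1$. This reframing is the conceptual heart of the argument.

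Next I would invoke the standard fact — the same one used implicitly throughout the excerpt — that the circumcenter of the reflection of a point $P$ over the sides of a triangle $\triangle$ is $\mathrm{Iso}_{\triangle}(P)$, together with its companion: inversion in the circumcircle of $\triangle$ composed with isogonal conjugation in $\triangle$ is closely tied to the pedal circle. More precisely, I would use that $\mathrm{Inv}_{(XYZ)}$ applied to a point, where $X,Y,Z$ are circumcenters of the three sub-triangles $\triangle APB, \triangle BPC, \triangle CPA$, is exactly the operation appearing in Corollary \ref{cor: green pink triangles thm}, whose second display already states $\mathrm{Iso}_{\triangle ZOX}(A) = Y$ and cyclically. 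Translating Corollary \ref{cor: green pink triangles thm} back through the inversion that produced it, one gets a relation purely among $\triangle ABC$, a point $P$, the circumcenter $O$ of $\triangle ABC$, and the three sub-circumcenters. Taking $\triangle ABC = T_i$ (a triad triangle of $Q^{(1)}$), $P$ the opposite vertex of $Q^{(1)}$, $O$ the $i$-th vertex of $Q^{(2)}$, and $X,Y,Z$ the neighboring vertices of $Q^{(2)}$, the identity $\mathrm{Inv}_{(XYZ)}(P) = \cdots$ becomes $\mathrm{Inv}_{o_i^{(2)}}(\text{vertex of }Q^{(1)}) = W$, which is Theorem \ref{thm:Inversive Property of W} again. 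The payoff direction is the reverse: I want to peel off one inversion. Using property \ref{enu:CS Inversion exchanges centers} of the circle of similitude (inversion in $CS(o_a,o_b)$ swaps the centers) and the fact from Theorem \ref{thm:W on CSes} that $W$ lies on all six circles $CS(o_i,o_j)$ of the \emph{first} generation, I can write $W$ as $\mathrm{Inv}_{k}(\text{something involving first-generation data})$ for a suitable circle $k$, and then identify $k$ with $o_3$ (resp. $o_4, o_1, o_2$) and the ``something'' with $\mathrm{Iso}_{T_3}(A_1)$ by the reflection-circumcenter characterization of isogonal conjugation.

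I expect the main obstacle to be bookkeeping the index shift correctly and verifying that the circle in which we invert at the first-generation level is genuinely the triad circle $o_3$ (the one through $B_1,C_1,D_1$, i.e. through the three vertices \emph{other} than $A_1$) and not some other circle. The cleanest way to nail this down is: start from $W = \mathrm{Inv}_{o_1^{(2)}}(A_1)$ (Theorem \ref{thm:Inversive Property of W}); recall $A_1 = \mathrm{Iso}_{T_1^{(2)}}(C_2)$ where $T_1^{(2)} = \triangle D_2A_2B_2$ and $o_1^{(2)}$ is its circumcircle; then use that $\mathrm{Inv}_{(\text{circumcircle of }\triangle)} \circ \mathrm{Iso}_\triangle$ is an involution-like map whose effect on $C_2$ can be computed as ``reflect $C_2$ in the sides of $T_1^{(2)}$, then this point's circumcenter, then invert'' — and chase this through the inversion centered at $A_1$ used in the proof of Theorem \ref{thm:Inversive Property of W}, under which triad circles of $Q^{(2)}$ become perpendicular bisectors and the whole configuration collapses to the first-generation picture. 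Once the index correspondence $(o_1^{(2)} \leftrightarrow T_1, A_1 \leftrightarrow C_1$ for the third slot, etc.$)$ is verified in one case, the other three follow by the cyclic symmetry $A\to B\to C\to D$ of the entire construction, so I would prove one equality in full and assert the rest ``by the analogous argument,'' matching the style of the surrounding proofs.
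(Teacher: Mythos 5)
You have the right two ingredients in hand --- Theorem \ref{thm:Reversing using Isogonal Conjugation} and Theorem \ref{thm:Inversive Property of W} --- and this is indeed how the paper proves the statement. But the way you combine them never actually lands on the claimed formula. The composition you keep computing, $\mbox{Inv}_{o_{1}^{(2)}}\circ\mbox{Iso}_{T_{1}^{(2)}}(C_{2})=\mbox{Inv}_{o_{1}^{(2)}}(A_{1})=W$, is the statement of the theorem \emph{for the quadrilateral $Q^{(2)}$}: the circle and triangle appearing in it are second-generation objects, and the point being conjugated is $C_2$, not a vertex of $Q^{(1)}$. Everything downstream of that --- reinterpreting $o_{1}^{(2)}$ as the circle through the three circumcenters of the triad triangles containing $A_1$, invoking Corollary \ref{cor: green pink triangles thm}, ``peeling off one inversion'' via the center-swapping property of circles of similitude, and ``chasing through the inversion centered at $A_1$'' --- either reproduces Theorem \ref{thm:Inversive Property of W} (which you already assumed) or is asserted without any mechanism: inversion does not interact with isogonal conjugation or with reflection in angle bisectors in any way that would make the configuration ``collapse to the first-generation picture,'' and no such computation is carried out.

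The irony is that you state the correct move in your second paragraph and then discard it. Apply the two theorems one generation \emph{lower}: let $Q^{(0)}$ be the quadrilateral obtained from $Q^{(1)}$ by the reverse process, so that $Q^{(1)}$ is the second generation of $Q^{(0)}$ and, by Theorem \ref{thm:Reversing using Isogonal Conjugation} read at this level, $C_{0}=\mbox{Iso}_{\triangle B_{1}C_{1}D_{1}}(A_{1})=\mbox{Iso}_{T_{3}}(A_{1})$. Theorem \ref{thm:Inversive Property of W} applied to the pair $(Q^{(0)},Q^{(1)})$ gives $W=\mbox{Inv}_{o_{3}}(C_{0})$, since $o_{3}=(B_{1}C_{1}D_{1})$ is the relevant triad circle of the \emph{second} generation relative to $Q^{(0)}$; composing yields $W=\mbox{Inv}_{o_{3}}\circ\mbox{Iso}_{T_{3}}(A_{1})$, and the other three equalities follow by the cyclic relabelling. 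The only point requiring a remark is that the $W$ attached to $Q^{(0)}$ coincides with the $W$ attached to $Q^{(1)}$, which is exactly the generation-independence of the spiral-similarity center established in Section \ref{sec: Iterative Process}. Your ``naively composing them runs the map through the second generation, which is exactly what we want to avoid'' has it backwards: the composition should be run through the zeroth generation, and then nothing needs to be avoided.
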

\textcolor{black}{This property suggests a surprising relation between
inversion and isogonal conjugation.}

Taking into account that the circumcenter and the orthocenter of a
triangle are isogonal conjugates of each other, we obtain the following
\begin{cor}
$W$ is the point at infinity if and only if the vertices of the quadrilateral
form an orthocentric system.
\end{cor}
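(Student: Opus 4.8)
The plan is to combine Theorem~\ref{thm:Inverse-Isogonal System} with the classical fact that in any triangle the circumcenter and orthocenter are isogonal conjugates. Recall that an inversion is a bijection of the plane-plus-infinity to itself which fixes $\infty$ if and only if the center of inversion maps to $\infty$; equivalently, $\mbox{Inv}_{o}(X)=\infty$ exactly when $X$ is the center of $o$. So from the representation $W=\mbox{Inv}_{o_{3}}\circ Iso_{T_{3}}(A_{1})$, we see that $W$ is the point at infinity precisely when $Iso_{T_{3}}(A_{1})$ is the center of the circle $o_{3}=(B_{1}C_{1}D_{1})$, i.e.\ the circumcenter of the triad triangle $T_{3}=\triangle B_{1}C_{1}D_{1}$.

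First I would make this reduction explicit: $W=\infty$ iff $Iso_{T_{3}}(A_{1})$ equals the circumcenter of $T_{3}$. Next, since isogonal conjugation with respect to a triangle is an involution, $Iso_{T_{3}}(A_{1})$ is the circumcenter of $T_{3}$ if and only if $A_{1}$ is the isogonal conjugate of the circumcenter of $T_{3}$, which is exactly the orthocenter of $T_{3}=\triangle B_{1}C_{1}D_{1}$. Thus $W=\infty$ iff $A_{1}$ is the orthocenter of $\triangle B_{1}C_{1}D_{1}$. Finally, I would invoke the symmetry of the four representations in Theorem~\ref{thm:Inverse-Isogonal System}: the same argument applied to the other three expressions shows $W=\infty$ iff $B_{1}$ is the orthocenter of $\triangle C_{1}D_{1}A_{1}$, iff $C_{1}$ is the orthocenter of $\triangle D_{1}A_{1}B_{1}$, iff $D_{1}$ is the orthocenter of $\triangle A_{1}B_{1}C_{1}$. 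Each of these four conditions individually is therefore equivalent to $W=\infty$; a fortiori they are equivalent to one another, and their simultaneous truth is precisely the statement that $A_{1}B_{1}C_{1}D_{1}$ is an orthocentric system.

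The argument is essentially immediate once Theorem~\ref{thm:Inverse-Isogonal System} is in hand, so there is no real obstacle; the only point requiring a word of care is the behavior of inversion at infinity --- one must state cleanly that $\mbox{Inv}_{o}(X)=\infty \iff X$ is the center of $o$, and hence that $W$ (the common value of the four composite maps) is the point at infinity exactly when the input to one (equivalently each) inversion is the corresponding circumcenter. One might also remark that this recovers, from the ``limit point'' side, the earlier observation that $r=1$ (the orthocentric case) corresponds to $|r|=1$ with period-$2$ behavior; but for the corollary itself nothing beyond the two cited facts is needed.
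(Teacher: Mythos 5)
Your proof is correct and takes essentially the same route as the paper: the paper derives this corollary directly from Theorem \ref{thm:Inverse-Isogonal System} together with the remark that the circumcenter and orthocenter of a triangle are isogonal conjugates. You have merely made explicit the two steps the paper leaves implicit (inversion sends the center of the circle to the point at infinity, and isogonal conjugation is an involution, so $Iso_{T_{3}}(A_{1})$ is the circumcenter of $T_{3}$ exactly when $A_{1}$ is its orthocenter, which is the orthocentric-system condition).
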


\subsection{W as the center of similarity for any pair of triad circles}

To show that $W$ is the center of spiral similarity for any pair
of triad circles (of possibly different generations), we first need
to prove Lemmas \ref{lem:Variable Circle Lemma}---\ref{lem:Chord Spiral}
below. 

The following lemma shows that given three points on a circle ---
two fixed and one variable --- the locus of the joint points of the
spiral similarities taking one fixed point into the other applied
to the variable point is a line.
\begin{lem}
\label{lem:Variable Circle Lemma}Let $M,N\in o$ and $W\notin o$.
For every point $L\in o$, define 
\[
J\doteq(MWL)\cap NL.
\]
The locus of points $J$ is a straight line going through $W$. \end{lem}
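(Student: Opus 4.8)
The plan is to work entirely with directed angles modulo $\pi$ and show that the direction of the line $WJ$ is independent of the choice of $L\in o$; since all such lines pass through the fixed point $W$, they coincide, and the locus of $J$ is (a dense subset of) that single line. First I would record the two incidences built into the definition of $J$: by construction $J$ lies on the circle $(MWL)$, so the four points $M,W,L,J$ are concyclic, and $J$ lies on the line $NL$, so the lines $LJ$ and $LN$ coincide.

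Next I would apply the inscribed-angle theorem for directed angles to the circle through $M,W,L,J$, using the chord $MJ$, to get $\angle(WM,WJ)=\angle(LM,LJ)$. Using $LJ=LN$ this becomes $\angle(WM,WJ)=\angle(LM,LN)$. Now the key point is that $L,M,N$ all lie on the fixed circle $o$, so the inscribed angle $\angle(LM,LN)$ subtended by the fixed chord $MN$ does not depend on $L$; call this fixed directed angle $\theta$. Thus
\[
\angle(WM,WJ)=\angle(LM,LJ)=\angle(LM,LN)=\theta
\]
for every admissible $L$. Since $WM$ is a fixed line and $\theta$ a fixed directed angle, $WJ$ is forced to be the unique line through $W$ making directed angle $\theta$ with $WM$, so $J$ always lies on this fixed line, which visibly passes through $W$.

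I expect the only real work to be routine bookkeeping of degeneracies rather than any substantive obstacle: one must note that $L=M$, $L=N$, the case where $NL$ is tangent to $(MWL)$ (so $J=L$), and the case where $W,M,L$ are collinear (so $(MWL)$ degenerates to a line) occur for only finitely many positions of $L$, hence the locus is a dense subset of the asserted line; and one should observe that whether $W$ itself, or the other one or two exceptional points of the line, are actually attained is immaterial to the statement. A secondary point to verify when invoking Lemma \ref{lem:3 pts on CS} later is that $J$ is indeed the ``joint point'' of the relevant spiral similarity, but for the present lemma the directed-angle chase above suffices.
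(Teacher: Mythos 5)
Your argument is correct, but it is a genuinely different route from the paper's. The paper proves the lemma by tracking the center $K$ of the circle $(MWL)$: as $L$ varies, $K$ runs over the perpendicular bisector of $MW$, and the spiral similarity centered at $M$ taking $(MWL)$ to $o$ (with joint point $L$) sends $K\mapsto O$ and $J\mapsto N$; hence $K\mapsto J$ is a \emph{fixed} spiral similarity centered at $M$ (angle $\angle OMN$, ratio $|MN|/|MO|$), so the locus of $J$ is the image of that perpendicular bisector, a line, and one checks it contains $W$ by taking $L=NW\cap o$. Your directed-angle chase --- $\angle(WM,WJ)=\angle(LM,LJ)=\angle(LM,LN)=\theta$ with $\theta$ independent of $L$ by the inscribed-angle theorem in $o$ --- is shorter and pins down the line directly as the line through $W$ making directed angle $\theta$ with $WM$, with $W$ on it for free. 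What it buys less cheaply is the converse containment: you only show the locus lies \emph{in} that line (hence your honest ``dense subset'' caveat), whereas the paper's fixed spiral similarity $K\mapsto J$ gives an explicit parametrization of the locus by the perpendicular bisector. For the way the lemma is used later (collinearity of $W$ with the joint points in Lemma \ref{thm:Cross-Spiral}), containment is all that is needed, so your proof suffices; your closing remark that $J$ is the joint point of the spiral similarity taking $L\mapsto N$, $M\mapsto M'$ is exactly the identification made in the paper immediately after the proof.
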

\begin{proof}
For each point $L\in o$, let $K$ be the center of the circle $k\doteq(MWL)$.
The locus of centers of the circles $k$ is the perpendicular bisector
of the segment $MW$. Since $M\in o\cap k$, there is a spiral similarity
centered at $M$ with joint point $L$ that takes $k$ into $o$.
This spiral similarity takes $K\mapsto O$ and $J\mapsto N$, where
$O$ is the center of $o$. Thus, $\triangle MOK\simeq\triangle MNJ$.
Since $M,O,K$ are fixed and the locus of $K$ is a line (the perpendicular
bisector), the locus of points $J$ is also a line. 

To show that the line goes through $W$, let $L=NW\cap o$. Then $J=W$. 
\end{proof}
\begin{figure}
\includegraphics[scale=0.07]{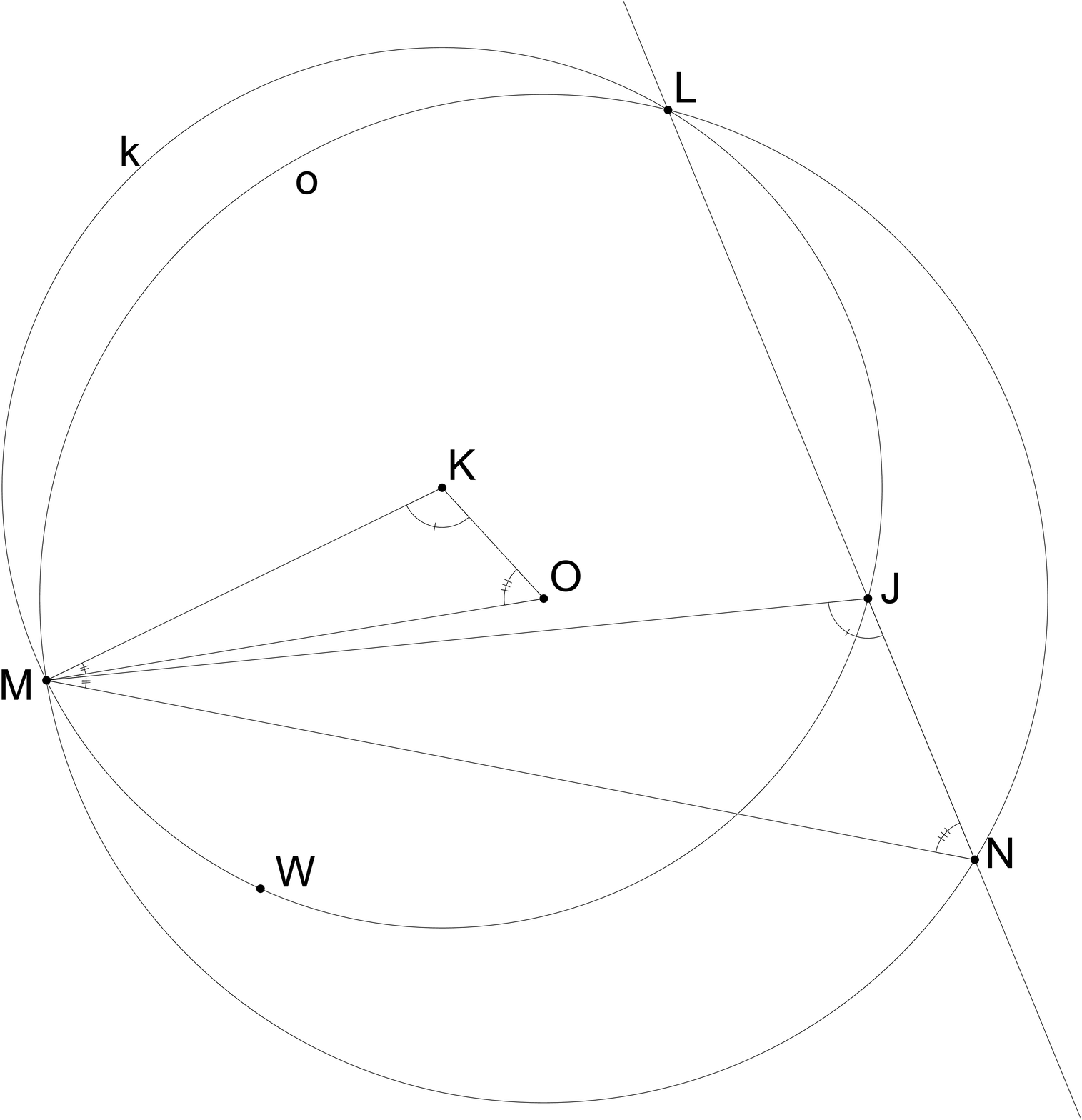}\includegraphics[scale=0.17]{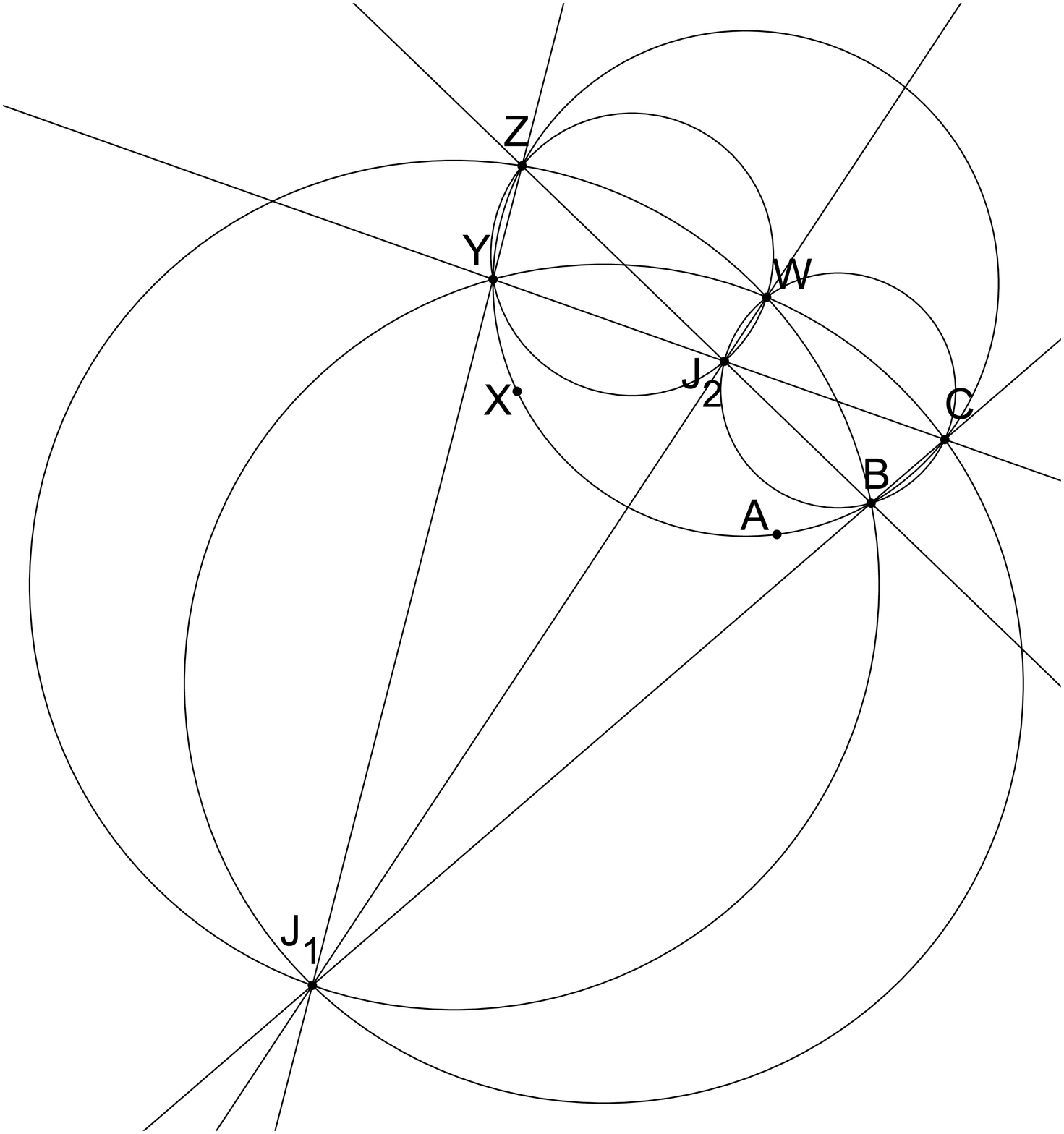}

\caption{Lemma \ref{lem:Variable Circle Lemma} and Lemma \ref{thm:Cross-Spiral}.}
\end{figure}
In the setup of the lemma above, let $H_{L,N}^{W}$ be the spiral
similarity centered at $W$ that takes $L$ into $N$. Let $M'$ be
the image of $M$ under this spiral similarity. Then $J$ is the joint
point for the spiral similarity taking $L\mapsto N$ and $M\mapsto M'$.

The following two results are used for proving that $W$ lies on the
circle of similitude of $o_{3}$ and $o_{1}^{(2)}$. 
\begin{lem}
\label{thm:Cross-Spiral} Let $AC$, $ZX$ be two distinct chords
of a circle $o$, and $W$ be the center of spiral similarity taking
$ZX$ into $AC$. Let $H_{B,C}^{W}$ be the spiral similarity centered
at $W$ that takes a point $B\in o$ into $C$. Then $H_{B,C}^{W}(Z)\in o$. \end{lem}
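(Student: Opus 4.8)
The plan is to argue entirely with directed angles modulo $\pi$ (so that all four points $A,C,Z,X$ and the variable point $B$ are treated uniformly as points of the fixed circle $o$), and to track what the spiral similarity $H_{B,C}^{W}$ does to the angle $\angle(ZW,\,ZB)$. First I would record what it means for $W$ to be the center of the spiral similarity carrying the chord $ZX$ onto the chord $AC$: writing $\rho$ for the rotation angle and $k$ for the ratio of $H^{W}_{ZX\to AC}$, we have $\angle(WZ,WA)=\rho=\angle(WX,WC)$ and $|WA|/|WZ|=k=|WC|/|WX|$. Equivalently, by the standard two‑circle description recalled in the preliminaries, $W$ is the second intersection point of the circle $(WZA)$‑type construction; concretely $W\in(AZP)\cap(CXP)$ where $P=AZ\cap CX$ is the joint point. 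The key structural fact I want to extract is that $H^{W}_{ZX\to AC}$ sends the circle $o$ (which contains the source chord $ZX$) to the circle $(WAC)$‑companion, but more usefully: the spiral similarity centered at $W$ taking $Z\mapsto A$ and $X\mapsto C$ carries $o$ to \emph{some} circle $o'$ through $A$ and $C$, and since $A,C\in o$ as well, $o'$ and $o$ meet at $A$ and $C$; the point is that $H^{W}$ maps $o\to o'$ with $o'\ne o$ in general. That is the wrong circle to chase, so instead I will use the \emph{associated} spiral similarity: the one centered at $W$ with joint point $P'=ZA\cap XC$... wait — the cleaner route is the one below.

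The route I would actually carry out uses Lemma~\ref{lem:Variable Circle Lemma} as a black box. Fix $M=Z$, $N=A$ in that lemma, so for every $L\in o$ the joint point $J(L)\doteq (ZWL)\cap AL$ traces a line $\ell$ through $W$; moreover, as noted right after that lemma, if $H_{L,A}^{W}$ denotes the spiral similarity centered at $W$ taking $L$ to $A$, then $J(L)$ is the joint point of the pair $L\mapsto A$, $Z\mapsto Z'$ where $Z'=H_{L,A}^{W}(Z)$. Now apply this with $L=X$: since $H_{X,A}^{W}$... no, I need $Z\mapsto A$, $X\mapsto C$, i.e. $L=X\mapsto C$ not $A$. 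So instead set $M=Z$, $N=C$ is not matching either. The correct bookkeeping: I want the map sending $B\mapsto C$; so apply Lemma~\ref{lem:Variable Circle Lemma} with $M=Z$ and $N=C$ — then for each $L\in o$, $J(L)=(ZWL)\cap CL$ traces a line through $W$, and $J(L)$ is the joint point of $\{L\mapsto C,\ Z\mapsto H_{L,C}^{W}(Z)\}$. Taking $L=X$ recovers the given spiral similarity $H^{W}$ (which sends $X\mapsto C$, hence $Z\mapsto A$ since it carries chord $ZX$ to chord $AC$); taking $L=B$ gives the map $H_{B,C}^{W}$ in the statement, whose effect on $Z$ is exactly the point $H_{B,C}^{W}(Z)$ we must locate. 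So the claim $H_{B,C}^{W}(Z)\in o$ is the assertion that, as $L$ runs over $o$, the image $Z'(L)\doteq H_{L,C}^{W}(Z)$ stays on $o$.

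To finish, I would prove that statement — ``$Z'(L)\in o$ for all $L\in o$'' — by verifying it is a single condition that holds at one value of $L$ and is preserved. By the similar‑triangles step inside the proof of Lemma~\ref{lem:Variable Circle Lemma}, $\triangle Z\,O\,K(L)\simeq\triangle Z\,C\,J(L)$ where $O$ is the center of $o$ and $K(L)$ is the center of $(ZWL)$; tracking the image circle, $H_{L,C}^{W}$ maps $o$ to the circle through $C$ obtained by the spiral similarity at $Z$ — and the point $Z'(L)=H^W_{L,C}(Z)$ is the second intersection of that image circle with the line $Z J(L)$. Since $J(L)$ lies on the fixed line $\ell$ through $W$, and since we already know $Z'(X)=A\in o$, the locus of $Z'(L)$ is a circle through $A$ and through $C$ (the latter because when $L=Z$, $H^W_{Z,C}$ fixes... rather sends $Z\mapsto C$); a circle through the two points $A,C$ of $o$ that shares a third point with $o$ must be $o$ itself. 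Identifying that third common point — say by taking $L$ so that $J(L)$ is the second intersection of $\ell$ with $o$ — closes the argument. The main obstacle I anticipate is purely organizational: keeping the two interlocking spiral similarities (the fixed $H^{W}_{ZX\to AC}$ and the moving $H^{W}_{L,C}$) and their joint points straight, and confirming that the locus of $Z'(L)$ is genuinely a circle rather than some other curve — this should follow because $Z'(L)$ is the image of the moving point $J(L)\in\ell$ under a fixed spiral similarity centered at $Z$ (taking $\ell$, a line, to a line, then intersecting with the moving image circle), but that reduction needs to be stated carefully, and that is where I would spend the real work.
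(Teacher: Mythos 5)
Your strategy is genuinely different from the paper's: you try to show that the whole locus $\omega=\{H^{W}_{L,C}(Z):L\in o\}$ is a circle, exhibit enough of its points on $o$ to force $\omega=o$, and then specialize to $L=B$; the paper instead fixes $B$, constructs $Y=H^{W}_{B,C}(Z)$ from the two joint points $J_{1},J_{2}$ of Lemma \ref{lem:Variable Circle Lemma}, identifies $W$ as the Miquel point of the resulting complete quadrilateral, and concludes $Z,Y,B,C$ concyclic by the radical-axis criterion. Your route is viable, but the step you yourself defer as ``the real work'' --- that the locus $\omega$ is a circle --- is precisely the content of the lemma, and the justification you sketch for it cannot be right. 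You propose that $Z'(L)=H^{W}_{L,C}(Z)$ is obtained from the moving joint point $J(L)\in\ell$ by a \emph{fixed} spiral similarity centered at $Z$; but a fixed spiral similarity carries the line $\ell$ to a line, whereas $\omega$ contains $A$, $C$, $Z$, $X$ (the images for $L=X,Z,C,A$ respectively), which are four concyclic and in general non-collinear points. The structural reason the trick fails here is that the \emph{source} of the spiral similarity is what moves while the target $C$ is fixed: in complex coordinates with $W$ at the origin, $H^{W}_{L,C}(w)=(C/L)w$, so $Z'(L)=CZ/L$ depends on $L$ through the inversive map $w\mapsto CZ/w$, not through a spiral similarity. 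This is exactly the opposite of Lemma \ref{lem:Chord Spiral}, where the moving point is the \emph{target} and the associated-spiral-similarity device legitimately yields a fixed spiral similarity $H^{W}_{P,Q}$ applied to the moving point; you cannot import that device unchanged.

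The gap is repairable along your own lines, but by a different mechanism: $L\mapsto CZ/L$ is a M\"obius transformation (defined on $o$ since $W\notin o$), hence carries $o$ to a circle or line; it sends $X\mapsto A$, $Z\mapsto C$, $C\mapsto Z$, $A\mapsto X$, and since the chords $AC$ and $ZX$ are distinct these four points include at least three distinct points of $o$, so the image circle is $o$ itself and $H^{W}_{B,C}(Z)=CZ/B\in o$ for every $B\in o$. This also disposes of your vague ``third common point'' step: you never need to hunt for the $L$ whose joint point is the second intersection of $\ell$ with $o$; taking $L=C$ gives the identity map and $Z'(C)=Z$, and $L=A$ gives $X$. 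As written, however, the proposal establishes neither that $\omega$ is a circle nor a third common point, so the three-point argument has nothing to stand on.
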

\begin{proof}
Let $l$ be the locus of the joint points corresponding to $M=Z$,
$N=C$ in Lemma \ref{lem:Variable Circle Lemma}. Let $J_{1}$ be
the joint point corresponding to $L=B$. Then $J_{1},W\in l$. 

Let $J_{2}$ be the joint point corresponding to $M=C$, $N=Z$ and
$L=B$ in Lemma \ref{lem:Variable Circle Lemma}. 

Let $Y=J_{2}C\cap J_{1}Z$. By properties of spiral similarity, $Y=H_{B,C}^{W}(Z)$. 

Notice that by definition of $J_{1}$, points $J_{1},B,C$ are on
a line. Similarly, by definition of $J_{2}$, points $J_{2},B,Z$
are on a line as well. By definition of $Y$, points $Y,J_{2},C$
are on a line, as are points $Z,Y,J_{1}$. The intersections of these
four lines form a complete quadrilateral. By Miquel's theorem, the
circumcircles of the triangles $\triangle BJ_{1}Z,\ \triangle BJ_{2}C,\ \triangle J_{2}YZ,\ \triangle CJ_{1}Y$
have a common point, the Miquel point for the complete quadrilateral.
By definitions of $J_{1}$ and $J_{2}$ , $(BJ_{2}C)\cap(BZJ_{1})=\{B,W\}$.
Thus, the Miquel point is either $B$ or $W$. It is easy to see that
$B$ can not be the Miquel point (if $B\neq C,Z$). Thus, $W$ is
the Miquel point of the complete quadrilateral. This implies that
$(YCJ_{1})$, $(YZJ_{2})$ both go through $W$. 

Consider the circles $k_{1}=(ZWJ_{2}Y)$ and $k_{2}=(CWJ_{2}B)$.
Then $RA(k_{1},k_{2})=l$. Since $ZY\cap BC=J_{1}\in l=RA(k_{1},k_{2})$,
by property \ref{enu:RA and two chords} in section \ref{sub:CS, MS, RA.},
points $Z,Y,B,C$ are on a circle. Thus, $Y\in o$. \end{proof}
\begin{rem*}
Notice that in the proof of the Lemma above there are three spiral
similarities centered at $W$ that take each of the sides of $\triangle XYZ$
into the corresponding side of $\triangle CBA$. We will call such
a construction a \emph{cross-spiral }and say that the two triangles
are obtained from each other via a cross-spiral. %
\footnote{Clearly, the sides of any triangle can be taken into the sides of
any another triangle by three spiral similarities. The special property
of the cross-spiral is that the centers of all three spiral similarities
are at the same point. %
}\end{rem*}
\begin{lem}
\label{lem:Chord Spiral} Let $PQ$ be a chord on a circle $o$ centered
at $O$. If $W\notin(POQ)$, there is a spiral similarity centered
at $W$ that takes $PQ$ into another chord of the circle $o$. \end{lem}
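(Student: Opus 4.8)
The plan is to translate the statement ``there is a spiral similarity centered at $W$ taking $PQ$ to another chord of $o$'' into an intersection-of-two-circles statement and then read off the hypothesis from a one-line cross-ratio computation. First I would fix the (unique, and non-trivial since $P\neq Q$) spiral similarity $\phi$ centered at $W$ with $\phi(P)=Q$; in complex coordinates centered at $W$ its multiplier is $\lambda_\phi=\dfrac{Q-W}{P-W}$. For an arbitrary point $P'$, let $\sigma_{P'}$ be the spiral similarity centered at $W$ with $\sigma_{P'}(P)=P'$. The elementary identity
\[
\sigma_{P'}(Q)-W=\frac{P'-W}{P-W}\,(Q-W)=\frac{Q-W}{P-W}\,(P'-W)=\phi(P')-W
\]
shows $\sigma_{P'}(Q)=\phi(P')$, so $\sigma_{P'}$ carries the chord $PQ$ onto the segment joining $P'$ and $\phi(P')$, and this segment is a chord of $o$ exactly when $P'\in o\cap\phi^{-1}(o)$.

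Next I would analyze the two circles $o$ and $\phi^{-1}(o)$. They are distinct: equality would force $\phi$ to be a rotation about $W$ preserving $o$, hence $W=O$, hence $W\in(POQ)$, against the hypothesis. Both pass through $P$, since $\phi(P)=Q\in o$. Two distinct circles through a common point $P$ meet again unless they are tangent at $P$, that is, unless $O$, $\phi^{-1}(O)$, $P$ are collinear. This is where the hypothesis enters: since $\phi^{-1}(O)-P=\phi^{-1}(O)-\phi^{-1}(Q)=\dfrac{P-W}{Q-W}\,(O-Q)$, one gets
\[
\frac{O-P}{\phi^{-1}(O)-P}=\frac{(P-O)(Q-W)}{(P-W)(Q-O)}=(P,Q;O,W),
\]
so the collinearity holds precisely when the cross-ratio $(P,Q;O,W)$ is real, i.e. when $P,Q,O,W$ are concyclic or collinear, i.e. when $W\in(POQ)$. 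Hence for $W\notin(POQ)$ the circles $o$ and $\phi^{-1}(o)$ have a second common point $P''\neq P$, and $\sigma_{P''}$ is a spiral similarity centered at $W$ carrying $PQ$ to the chord $P''\phi(P'')$ of $o$.

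The one thing still requiring care is that this chord genuinely differs from $PQ$. Since $P''\neq P$, the only obstruction is the degenerate possibility $P''=Q$, $\phi(P'')=P$, i.e. $\sigma_{P''}$ a half-turn about $W$; this forces $W$ to be the midpoint of $PQ$, which may be excluded as a standing assumption (it does not occur in the applications of the lemma). I expect the substantive step to be exactly the reformulation via $\phi$ and the circles $o,\phi^{-1}(o)$ — once that picture is in place, ``tangent at $P$ $\Leftrightarrow$ $W\in(POQ)$'' is the short cross-ratio computation above — while the mild non-degeneracy bookkeeping is the only place one must be slightly careful. (The paper's Variable Circle Lemma could be substituted for the complex-coordinate identity in the first paragraph, but the direct computation seems cleaner here.)
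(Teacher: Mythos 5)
Your argument is correct and is essentially the paper's own proof run in mirror image: where the paper fixes $\phi=H^{W}_{P,Q}$, uses the associated spiral similarity identity to see that the admissible images of $Q$ sweep out the circle $\phi(o)$ through $Q$, and takes the second point of $o\cap\phi(o)$ (ruling out tangency because it would force $PQWO$ cyclic via the joint point), you work with $o\cap\phi^{-1}(o)$ through $P$ and rule out tangency by the equivalent cross-ratio computation $(P,Q;O,W)\in\mathbb{R}\iff W\in(POQ)$. Your closing remark about the degenerate case ($W$ the midpoint of $PQ$, where the ``new'' chord is $PQ$ again) flags a genuine gap that the paper leaves silent --- as does the companion case $W\in o$, in which the second intersection point is $W$ itself and $\sigma_{P''}$ degenerates to a constant map --- so if anything your write-up is the more careful of the two.
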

\begin{proof}
Let $H_{P,P'}^{W}$ be the spiral similarity centered at $W$ that
takes $P$ into another point $P'$ on circle $o$. As $P'$ traces
out $o$, the images $H_{P,P'}^{W}(Q)$ of $Q$ trace out another
circle, $o_{Q}$. To see this, consider the associated spiral similarity
and notice that $H_{P,Q}^{W}(P')=Q'$. Since $P'$ traces out $o$,
$H_{P,Q}^{W}(o)=o_{Q}$. Since $Q=H_{P,P}^{W}(Q)\in o_{Q}$, it follows
that $Q\in o\cap o_{Q}$.

Suppose that $o$ and $o_{Q}$ are tangent at $Q$. From $H_{P,Q}(o)=o_{Q}$
it follows that the joint point is $Q$, and therefore the quadrilateral
$PQWO$ must be cyclic. Since $W\notin(POQ)$, this can not be the
case. Thus, the intersection $o\cap o_{Q}$ contains two points, $Q$
and $Q'$. This implies that there is a unique chord, $P'Q'$, of
$o$ to which $PQ$ can be taken by a spiral similarity centered at
$W$. 
\end{proof}
\begin{figure}
\includegraphics[scale=0.45]{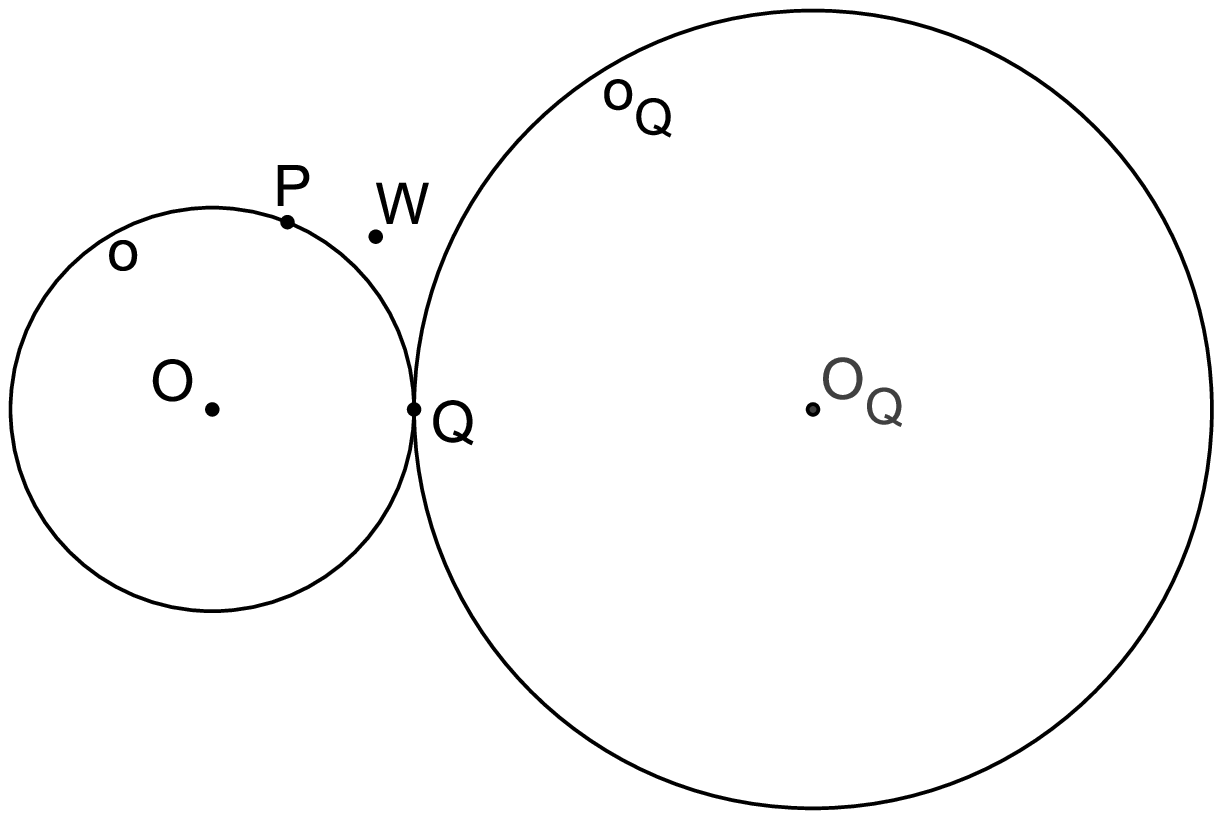}

\caption{Proof of Lemma \ref{lem:Chord Spiral}.}

\end{figure}

\begin{thm}
\label{thm:W on CS 1-2}$W\in CS(o_{3},o_{1}^{(2)})$. \end{thm}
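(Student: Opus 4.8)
The plan is to reduce the statement $W\in CS(o_3, o_1^{(2)})$ to a statement about spiral similarities of chords, which is precisely what Lemmas \ref{thm:Cross-Spiral} and \ref{lem:Chord Spiral} were built to handle. Recall that $o_3 = (B_1C_1D_1)$ and $o_1^{(2)} = (D_2A_2B_2)$; note $A_2, B_2, D_2$ are the circumcenters of $\triangle D_1A_1B_1$, $\triangle A_1B_1C_1$, $\triangle B_1C_1D_1$ respectively, so $o_1^{(2)}$ is the circle through these three circumcenters. To show $W$ lies on the circle of similitude of two circles, it suffices to exhibit a spiral similarity centered at $W$ carrying one circle onto the other; equivalently, by the associated–spiral-similarity principle, it suffices to carry a chord of $o_3$ to a chord of $o_1^{(2)}$ with matching endpoints under a $W$-centered spiral similarity.

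First I would identify a convenient chord. The natural choice is the chord $B_1D_1$ of $o_3$ (the two vertices shared by $o_1 = (D_1A_1B_1)$ and $o_3$). By Theorem \ref{thm:W on CSes}, $W \in CS(o_1, o_3)$, so there is a spiral similarity $H_{1,3}^W$ centered at $W$ taking $o_1 \to o_3$; by Lemma \ref{lem:H14 of B is C}-type reasoning (the third-vertex property), $H_{1,3}^W$ takes $A_1 \mapsto C_1$ and fixes the common chord appropriately, i.e. it acts on the chord $B_1D_1$ of $o_1$ and sends it to the chord $B_1D_1$ of $o_3$. The key move is then to transport this to the second generation: the vertices $A_2, B_2, D_2$ of $o_1^{(2)}$ are obtained from configurations involving $o_1$, so I would use Lemma \ref{lem:3 pts on CS} and the joint-point constructions (exactly as in the proof of Lemma \ref{lem:H14 of B is C}) to track where $H_{1,2}^W$ and $H_{3,?}^W$ send the relevant points, and apply Lemma \ref{thm:Cross-Spiral} to conclude that the image chord lands on $o_1^{(2)}$. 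The cross-spiral lemma guarantees that once a spiral similarity centered at $W$ takes one chord of a circle into another chord of the same (or, via Lemma \ref{lem:Chord Spiral}, a related) circle, the whole triangle of chords transports coherently.

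Concretely, the argument I expect is: (1) Use that $W \in CS(o_1, o_3)$ and $W \in CS(o_1, o_1^{(2)})$ — the latter because inverting at a vertex as in Theorem \ref{thm:Inversive Property of W} turns $CS(o_1, o_1^{(2)})$ into a perpendicular bisector through $W'$; alternatively it follows from the isoptic/power setup. (2) Then $H_{3,1}^W$ composed with $H_{1,1^{(2)}}^W$ gives a spiral similarity $H_{3,1^{(2)}}^W$ centered at $W$ taking $o_3 \to o_1^{(2)}$, which is exactly the assertion $W \in CS(o_3, o_1^{(2)})$, since the locus of centers of spiral similarities taking $o_3$ to $o_1^{(2)}$ is precisely that circle of similitude. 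The content of Lemmas \ref{lem:Variable Circle Lemma}–\ref{lem:Chord Spiral} is what makes step (1)'s claim $W \in CS(o_1, o_1^{(2)})$ rigorous without circular reasoning: one shows directly, via the variable-chord locus being a line through $W$ and Miquel's theorem, that the chord $B_1D_1 \subset o_1$ is carried by a $W$-centered spiral similarity onto a chord of $o_1^{(2)}$.

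The main obstacle will be establishing $W \in CS(o_1, o_1^{(2)})$ cleanly — i.e. that there genuinely is a spiral similarity centered at $W$ taking the first-generation triad circle $o_1$ to the second-generation triad circle $o_1^{(2)}$. Unlike the same-generation case handled in Theorem \ref{thm:W on CSes} (where all four circles share vertices pairwise), $o_1$ and $o_1^{(2)}$ need not intersect in vertices of the configuration, so the "third vertex" bookkeeping of Lemma \ref{lem:H14 of B is C} does not apply directly. This is exactly why Lemma \ref{thm:Cross-Spiral} (cross-spiral) and Lemma \ref{lem:Chord Spiral} were proved first: the plan is to realize $o_1 \to o_1^{(2)}$ as a cross-spiral built from the chord $B_1D_1$ and its image, checking the tangency/degeneracy hypothesis $W \notin (POQ)$ of Lemma \ref{lem:Chord Spiral} using the isoptic property (Theorem \ref{thm: Isoptic property}), which prevents $W$ from lying on the relevant circle except in the degenerate orthocentric case. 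Once that hypothesis is verified, the chord $B_1D_1 \subset o_1$ maps to a unique chord of $o_1^{(2)}$ under a $W$-centered spiral similarity, Lemma \ref{thm:Cross-Spiral} promotes this to a full circle-to-circle spiral similarity, and $W \in CS(o_3, o_1^{(2)})$ follows by composing with $H_{3,1}^W$.
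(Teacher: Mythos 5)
Your toolkit is the right one, but the logical skeleton has a genuine gap: everything is made to rest on the claim $W\in CS(o_{1},o_{1}^{(2)})$, and none of the justifications you offer for that claim survives scrutiny. The inversion trick of Theorem \ref{thm:Inversive Property of W} linearizes $CS(o_{1},o_{2})$ only because that circle of similitude passes through the center of inversion $A_{1}$ (it contains both points of $o_{1}\cap o_{2}$); since $A_{1}\notin o_{1}^{(2)}$, the circle $CS(o_{1},o_{1}^{(2)})$ does not pass through $A_{1}$, and its image under inversion at $A_{1}$ is a circle, not a perpendicular bisector. The isoptic property is a statement about the four triad circles of a single generation and says nothing about $o_{1}^{(2)}$. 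Lemma \ref{lem:Chord Spiral} produces a second chord of the \emph{same} circle, so it cannot by itself carry $B_{1}D_{1}\subset o_{1}$ onto a chord of the different circle $o_{1}^{(2)}$. Finally, your side remark that $H_{1,3}^{W}$ ``sends the chord $B_{1}D_{1}$ of $o_{1}$ to the chord $B_{1}D_{1}$ of $o_{3}$'' is false: a nontrivial spiral similarity fixing both $B_{1}$ and $D_{1}$ would have to be the identity.

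What is missing is the one computation that actually crosses generations. Using Lemma \ref{lem:H14 of B is C} (centers map to centers, third vertices to third vertices) together with the associated-spiral-similarity principle, one has $W$-centered maps $H_{C_{1},B_{2}}^{W}\colon C_{1}\mapsto B_{2},\ D_{1}\mapsto A_{2}$ and $H_{B_{1},A_{2}}^{W}\colon B_{1}\mapsto A_{2},\ C_{1}\mapsto D_{2}$. Since spiral similarities with a common center commute, the composition $H_{B_{2},C_{1}}^{W}\circ H_{B_{1},A_{2}}^{W}$ sends $B_{1}\mapsto D_{1}$ and $B_{2}\mapsto D_{2}$, so its associated spiral similarity centered at $W$ sends the chord $B_{1}D_{1}$ of $o_{3}$ to the chord $B_{2}D_{2}$ of $o_{1}^{(2)}$, and $\triangle B_{1}C_{1}D_{1}$, $\triangle D_{2}A_{2}B_{2}$ become related by a cross-spiral about $W$. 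Only now do Lemmas \ref{lem:Chord Spiral} and \ref{thm:Cross-Spiral} enter, applied entirely inside $o_{1}^{(2)}$: after checking $W\notin(B_{2}A_{3}D_{2})$ (which the paper does with the inversive property of $W$, not the isoptic one), one obtains a second chord $XZ$ of $o_{1}^{(2)}$ and a point $Y\in o_{1}^{(2)}$ with $\triangle XYZ$ cross-spiral related to $\triangle B_{2}A_{2}D_{2}$; composing the two cross-spirals gives a single spiral similarity centered at $W$ taking $\triangle B_{1}C_{1}D_{1}$ to $\triangle XYZ$, hence $o_{3}$ to $o_{1}^{(2)}$. Note that the argument lands directly on $CS(o_{3},o_{1}^{(2)})$; the relation $W\in CS(o_{1},o_{1}^{(2)})$ you wanted to start from is only recovered afterwards, as part of the corollary.
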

\begin{proof}
We've shown previously that $W$ is on all six circles of similitude
of $A_{1}B_{1}C_{1}D_{1}$. Since $W$ has the property that 
\begin{eqnarray*}
H_{C_{1},B_{2}}^{W}:\ C_{1}\mapsto B_{2},D_{1}\mapsto A_{2},\\
H_{B_{1},A_{2}}^{W}:\ B_{1}\mapsto A_{2},C_{1}\mapsto D_{2},\\
\end{eqnarray*}
it follows that 
\[
H_{B_{2},C_{1}}^{W}H_{B_{1},A_{2}}^{W}(B_{1})=H_{B_{2},C_{1}}^{W}(A_{2})=D_{1}.
\]

Since the spiral similarities centered at $W$ commute, it follows
that
\[
H_{B_{2},C_{1}}^{W}H_{B_{1},A_{2}}^{W}(B_{2})=H_{B_{1},A_{2}}^{W}H_{B_{2},C_{1}}^{W}(B_{2})=H_{B_{1}A_{2}}^{W}(C_{1})=D_{2}.
\]
 This means that there is a spiral similarity centered at $W$ that
takes $B_{1}D_{1}$ into $B_{2}D_{2}$. Therefore, $\triangle B_{1}C_{1}D_{1}$
and $\triangle D_{2}A_{2}B_{2}$ are related by a cross-spiral centered
at $W$. 

We now show that there is a cross-spiral that takes $\triangle D_{2}A_{2}B_{2}$
into another triangle, $\triangle XYZ$, with vertices on the same
circle, $o_{1}^{(2)}=(D_{2}A_{2}B_{2})$. This will imply that there
is a spiral similarity centered at $W$ that takes $\triangle B_{1}C_{1}D_{1}$
into  $\triangle XYZ$. This, in turn, implies that $W$ is a center
of spiral similarity taking $o_{3}$ into $o_{1}^{(2)}$. 

Assume that $W\in(B_{2}A_{3}D_{2})$. Since inversion in $(D_{2}A_{2}B_{2})$
takes $W$ into $A_{1}$ and $(B_{2}A_{3}D_{2})$ into $B_{2}D_{2}$,
it follows that $A_{1}\in B_{2}D_{2}$. This can not be the case for
a nondegenerate quadrilateral. Thus, $W\in(B_{2}A_{3}D_{2})$. 

By Lemma \ref{lem:Chord Spiral}, there is a spiral similarity centered
at $W$ that takes the chord $B_{2}D_{2}$ into another chord, $XZ$,
of the circle $(D_{2}A_{2}B_{2})$. Thus, there is a spiral similarity
taking $B_{2}D_{2}$ into $XZ$ and centered at $W$. 

By Lemma \ref{thm:Cross-Spiral}, there is a point $Y\in o_{1}^{(2)}$
such that $\triangle XYZ$ and $\triangle B_{2}A_{2}D_{2}$ are related
by a cross-spiral centered at $W$. (See also the remark after Lemma
\ref{thm:Cross-Spiral}).

By composing the two cross-spirals, we conclude that $\triangle XYZ\sim\triangle D_{1}C_{1}B_{1}$.
Since $(XYZ)=o_{1}^{(2)}$ and $(D_{1}C_{1}B_{1})=o_{3}$, it follows
that $W\in CS(o_{1}^{(2)},o_{3})$. \end{proof}
\begin{cor}
$W\in CS(o_{i}^{(1)},o_{j}^{(k)})$ for any $i,j$, k. \end{cor}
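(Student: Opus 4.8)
The plan is to bootstrap from Theorem~\ref{thm:W on CS 1-2} using two ingredients: first, that the point $W$ attached to $Q^{(1)}$ is the same point that the constructions of Section~\ref{sec: W} attach to every generation $Q^{(m)}$; and second, that spiral similarities sharing a common center compose to a spiral similarity with that same center, so one may chain circles of similitude together. Granting these, the corollary reduces to writing down a finite chain of triad circles from $o_i^{(1)}$ to $o_j^{(k)}$ in which consecutive links are related by a spiral similarity centered at $W$.

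\emph{Step 1 (generation-independence of $W$).} By Theorem~\ref{thm:Inverse process}, the center of the spiral similarities relating consecutive odd (respectively even) generations --- and relating the odd to the even generations --- is a single point, which, as noted after Corollary~\ref{cor:Isodynamic property}, coincides with the $W$ of this section. Hence every statement of Section~\ref{sec: W} applies verbatim to $Q^{(m)}$ with the same $W$: in particular Theorem~\ref{thm:W on CSes} applied to $Q^{(m)}$ gives $W\in CS(o_i^{(m)},o_j^{(m)})$ for all $i,j$, and Theorem~\ref{thm:W on CS 1-2} applied to $Q^{(m)}$, combined with the cyclic relabeling $A\to B\to C\to D\to A$ (which does not change the point set, hence fixes $W$, and cyclically permutes $o_1\to o_2\to o_3\to o_4\to o_1$), gives $W\in CS(o_i^{(m)},o_j^{(m+1)})$ for all $i,j$. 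Alternatively this can be seen by a self-contained composition/uniqueness argument: Theorem~\ref{thm:W on CS 1-2} and the within-generation result already force $W$ onto every $CS(o_i^{(2)},o_j^{(2)})$, and since any two such circles of similitude meet only in $W$ and a vertex of $Q^{(2)}$, $W$ must be \emph{the} $W$ of $Q^{(2)}$; induction then handles all $m$.

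\emph{Step 2 (chaining).} Fix $o_i^{(1)}$ and $o_j^{(k)}$, distinct. Using Step~1, pick for each $m$ with $1\le m<k$ a spiral similarity centered at $W$ carrying a triad circle of generation $m$ onto one of generation $m+1$, and, within each generation, spiral similarities centered at $W$ connecting any two of its triad circles (Theorem~\ref{thm:W on CSes}). Concatenating a suitable finite string of these maps gives a single map taking $o_i^{(1)}$ onto $o_j^{(k)}$. In complex-number notation each factor has the form $P\mapsto W+a(P-W)$, so the composition has the same form: it is a spiral similarity centered at $W$, whence $W\in CS(o_i^{(1)},o_j^{(k)})$. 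The only way to have $o_i^{(1)}=o_j^{(k)}$ is in the periodic cases $r=\pm1$, where the assertion is vacuous (or one simply threads the chain through one extra intermediate circle).

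\emph{Main obstacle.} Everything beyond Theorem~\ref{thm:W on CS 1-2} is bookkeeping; the point that really needs care is Step~1, i.e.\ that the $W$ built from $Q^{(1)}$ genuinely equals the $W$ of every later generation. This rests on the generation-independence of the common center of spiral similarities (Theorem~\ref{thm:Inverse process} together with the identification following Corollary~\ref{cor:Isodynamic property}), or on the composition/uniqueness argument sketched above. A minor secondary check is that every link of the chain in Step~2 actually exists --- that the relevant circles of similitude are non-tangent --- but this is guaranteed at each step by the non-degeneracy hypotheses already invoked in Theorems~\ref{thm:W on CSes} and~\ref{thm:W on CS 1-2} and by Lemma~\ref{lem:Chord Spiral}.
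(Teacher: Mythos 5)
Your proposal is correct and follows essentially the same route as the paper: both hinge on Theorem~\ref{thm:W on CS 1-2} as the link between consecutive generations, use the fact that spiral similarities with common center $W$ compose (equivalently, transitivity of membership in circles of similitude through $W$) to reach all pairs within and across generations, and finish by induction on the generation index. Your explicit Step~1 (identifying the $W$ of $Q^{(1)}$ with the $W$ of every $Q^{(m)}$ via uniqueness of the common point of the circles of similitude) is only a reorganization of what the paper does inline when it concludes that being on two second-generation circles of similitude puts $W$ on all of them.
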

\begin{proof}
Since there is a spiral similarity centered at $W$ that takes $A_{1}B_{1}$
into $C_{2}D_{2}$, Theorem \ref{thm:W on CS 1-2} implies that $W\in CS(o_{1},o_{4}^{(2)})$.
Since $W\in CS(o_{1},o_{2})$, it follows that $W\in CS(o_{4}^{(2)},o_{2})$.
Since $W$ is on two circles of similitude for the second generation,
it follows that it is on all four. Furthermore, we can apply Theorem
\ref{thm:W on CS 1-2} to the triad circles of the second and third
generation to show that $W$ is also on all four circles of similitude
of the third generation. 

Finally, a simple induction argument shows that $W\in CS(o_{j}^{(1)},o_{i}^{(k)})$.
Assuming $W\in CS(o_{j}^{(1)},o_{i}^{(k-1)})$, Theorem \ref{thm:W on CS 1-2}
implies that $W\in CS(o_{i}^{(k-1)},o_{i}^{(k)})$. Thus, $W\in CS(o_{j}^{(1)},o_{i}^{(k)})$. 
\end{proof}
Using this, we can show that $W$ lies on all the circles of similitude:
\begin{thm}
\label{thm: W on all CSs} $W\in CS(o_{i}^{(k)},o_{j}^{(l)})$ for
all $i,j\in\{1,2,3,4\}$ and any $k,l$. 
\end{thm}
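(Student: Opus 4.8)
The plan is to reduce the general statement to the single ``bridge'' result already in hand, Theorem~\ref{thm:W on CS 1-2}, which says $W\in CS(o_3,o_1^{(2)})$, together with its immediate corollary $W\in CS(o_i^{(1)},o_j^{(k)})$ for all $i,j,k$. The point is that the argument proving Theorem~\ref{thm:W on CS 1-2} was purely local: it used only that $W$ lies on all six circles of similitude of one generation and that the spiral similarities $H^W$ centered at $W$ commute, so the same proof applies verbatim with the first generation replaced by the $k$th generation and the second by the $(k+1)$th. Thus the first step is to record that $W$ lies on all four circles of similitude $CS(o_i^{(k)},o_j^{(k)})$ within each fixed generation $k$; this follows by induction on $k$ exactly as in the preceding corollary (base case $k=1$ is Theorem~\ref{thm:W on CSes}, and the inductive step invokes Theorem~\ref{thm:W on CS 1-2} applied to generations $k-1$ and $k$ to get $W\in CS(o_i^{(k-1)},o_i^{(k)})$, hence $W$ is on two of the four $CS$'s of generation $k$, hence all four).

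The second step handles the cross-generation case for $k<l$. Given $o_i^{(k)}$ and $o_j^{(l)}$, I would first note that $W\in CS(o_i^{(k)},o_m^{(l-1)})$ for all $m$ by the preceding corollary (induction from generation $k$ down the chain of consecutive generations). Then apply the same reasoning as in the corollary: since $W\in CS(o_i^{(k)},o_m^{(l-1)})$ and, by step one, $W\in CS(o_m^{(l-1)},o_j^{(l)})$ (via Theorem~\ref{thm:W on CS 1-2} between generations $l-1$ and $l$), composing the two spiral similarities centered at $W$ gives a spiral similarity centered at $W$ taking $o_i^{(k)}$ to $o_j^{(l)}$, i.e.\ $W\in CS(o_i^{(k)},o_j^{(l)})$. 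The symmetry of the relation ``$W$ is the center of a spiral similarity taking one circle to another'' under swapping the two circles then covers $k>l$, and the case $k=l$ is step one.

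The one genuinely non-routine point is making sure the proof of Theorem~\ref{thm:W on CS 1-2} really does transplant to an arbitrary generation without a hidden appeal to a first-generation-specific fact. Inspecting that proof, the only inputs are: (a) $W$ lies on all six $CS(o_i,o_j)$ of the generation in question --- which is step one here; (b) the commutativity of spiral similarities centered at a common point --- generation-independent; (c) Lemma~\ref{lem:Chord Spiral} and Lemma~\ref{thm:Cross-Spiral}, which are stated for an abstract circle and chord; and (d) the nondegeneracy observation that $W\notin(B_2A_3D_2)$, whose analogue is that $W$, being the inverse of a vertex of generation $k$ in a triad circle of generation $k+1$ (the inversive property, Theorem~\ref{thm:Inversive Property of W}, which also holds in every generation since the iterative process is self-similar), cannot lie on the relevant auxiliary circle for a nondegenerate quadrilateral. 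So the main obstacle is really bookkeeping: one must phrase the induction so that ``nondegeneracy of $Q^{(k)}$'' is available at every stage, which holds because a nondegenerate noncyclic quadrilateral produces a nondegenerate noncyclic successor (Theorem~\ref{thm:Inverse process}, part~1, together with part~\ref{enu:similarity of generations}). With that in place the theorem follows by two nested inductions as above.
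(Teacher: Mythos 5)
Your proposal is correct and follows essentially the route the paper intends: the paper gives no explicit proof of this theorem, deducing it directly from the corollary $W\in CS(o_{i}^{(1)},o_{j}^{(k)})$ by composing spiral similarities centered at $W$ (transitivity of the relation through a first-generation triad circle), which is exactly the mechanism you use. Your chaining through generation $l-1$ and the verification that Theorem \ref{thm:W on CS 1-2} transplants to arbitrary generations is more elaborate than the one-line composition through a fixed first-generation circle that the corollary already permits, but it is the same idea.
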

Recall that the \textit{complete quadrangle }is the configuration
of $6$ lines going through all possible pairs of $4$ given vertices. 
\begin{thm}
\label{thm:(Self-Duality-under-Inversion)}(Inversion in a circle
centered at $W$) Consider the complete quadrangle determined by a
nondegenerate quadrilateral. Inversion in $W$ transforms
\begin{itemize}
\item $6$ lines of the complete quadrilateral into the $6$ circles of
similitude of the triad circles of the image quadrilateral;
\item $6$ circles of similitude of the triad circles into the $6$ lines
of the image quadrangle. 
\end{itemize}
\end{thm}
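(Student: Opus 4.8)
The plan is to derive both halves of the statement from a single fact proved earlier, namely that $W$ lies on every circle of similitude $CS(o_i,o_j)$ (Theorem~\ref{thm:W on CSes}), together with property~\ref{enu:CS <-> RA} of Section~\ref{sub:CS, MS, RA.}, which asserts that an inversion centered at a point of $CS(o_1,o_2)$ simultaneously sends $CS(o_1,o_2)$ to the radical axis of the two image circles and sends $RA(o_1,o_2)$ to the circle of similitude of the two image circles. Throughout, for any point or circle $X$ let $X'$ denote its image under inversion in a fixed circle centered at $W$. Since the quadrilateral is nondegenerate (in particular noncyclic), $W$ is a finite point, and by the isoptic property (Theorem~\ref{thm: Isoptic property}) $W$ lies strictly inside or strictly outside each triad circle; hence $W\notin o_i$, so each $o_i'$ is again a genuine circle, and because the inversion sends $A_1,B_1,C_1,D_1$ to $A_1',B_1',C_1',D_1'$, the circle $o_i'$ passes through the three corresponding image vertices --- that is, $o_i'$ is exactly the $i$-th triad circle of the image quadrilateral $A_1'B_1'C_1'D_1'$.

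Next I would record the dictionary between the six lines of the complete quadrangle on $A_1,B_1,C_1,D_1$ and the six unordered pairs of triad circles. Any two of $o_1=(D_1A_1B_1)$, $o_2=(A_1B_1C_1)$, $o_3=(B_1C_1D_1)$, $o_4=(C_1D_1A_1)$ meet in exactly two vertices, and the line through those two vertices is precisely their radical axis:
\[
A_1B_1=RA(o_1,o_2),\quad B_1C_1=RA(o_2,o_3),\quad C_1D_1=RA(o_3,o_4),
\]
\[
D_1A_1=RA(o_4,o_1),\quad A_1C_1=RA(o_2,o_4),\quad B_1D_1=RA(o_1,o_3).
\]
As the pair ranges over all $\binom{4}{2}=6$ possibilities the left-hand sides exhaust the six lines of the complete quadrangle, each exactly once. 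The same dictionary applies verbatim to $A_1'B_1'C_1'D_1'$ and its triad circles $o_i'$.

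Now I would apply property~\ref{enu:CS <-> RA} with base point $W\in CS(o_i,o_j)$: for every pair $i\ne j$ it yields simultaneously
\[
\bigl(RA(o_i,o_j)\bigr)'=CS(o_i',o_j')\qquad\mbox{and}\qquad\bigl(CS(o_i,o_j)\bigr)'=RA(o_i',o_j').
\]
Read through the dictionary above, the first identity says that inversion in $W$ carries each of the six lines of the complete quadrangle onto the circle of similitude of a pair of triad circles of $A_1'B_1'C_1'D_1'$, and that all six such circles of similitude are obtained; the second says it carries each of the six circles of similitude $CS(o_i,o_j)$ onto $RA(o_i',o_j')$, which is one of the six lines of the image complete quadrangle, and that all six are obtained. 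This is exactly the assertion of the theorem.

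I do not anticipate a genuine geometric obstacle: once Theorem~\ref{thm:W on CSes} and property~\ref{enu:CS <-> RA} are in hand, the argument is essentially bookkeeping. The steps that require care are (i) checking that both correspondences are exhaustive and unambiguous --- each line being the radical axis of a single pair of triad circles, and each $o_i'$ being precisely the $i$-th triad circle of $A_1'B_1'C_1'D_1'$ --- and (ii) making explicit that the standing nondegeneracy hypotheses place $W$ at a finite point lying on no triad circle (if one wishes to be completely careful, one should also note the degenerate positions in which a line of the quadrangle passes through $W$, where the corresponding ``circle of similitude'' of the image degenerates to that same line, which is consistent with the two image triad circles having equal radii).
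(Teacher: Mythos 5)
Your proposal is correct and follows essentially the same route as the paper: the six lines of the complete quadrangle are the radical axes of the pairs of triad circles, $W$ lies on all six circles of similitude by Theorem \ref{thm:W on CSes}, and property \ref{enu:CS <-> RA} then swaps radical axes and circles of similitude under inversion centered at $W$. Your version merely makes explicit the bookkeeping (the line--pair dictionary and the identification of $o_i'$ with the triad circles of the image quadrilateral) that the paper leaves implicit.
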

\begin{proof}
Observe that the $6$ lines of the quadrangle are the radical axes
of the triad circles taken in pairs. Since $W$ belongs to all the
circles of similitude of triad circles, by property \ref{enu:CS <-> RA}
in section \ref{sub:CS, MS, RA.}, inversion in a circle centered
in $W$ takes radical axes into the circles of similitude. This implies
the statement. 
\end{proof}

\section{Pedal properties\label{sec:Pedal}}

\subsection{Pedal of $W$ with respect to the original quadrilateral}

Since $W$ has a distance property similar to that of the isodynamic
points of a triangle (see Corollary \ref{cor:Isodynamic property}),
it is interesting to investigate whether the analogy between these
two points extends to pedal properties. In this section we show that
the pedal quadrilateral of $W$ with respect to $A_{1}B_{1}C_{1}D_{1}$
(and, more generally, with respect to any $Q^{(n)}$) is a nondegenerate
parallelogram. Moreover, $W$ is the unique point whose pedal has
such a property. These statements rely on the fact that $W$ lies
on the intersection of two circles of similitude, $CS(o_{1},o_{3})$
and $CS(o_{2},o_{4})$. 

First, consider the pedal of a point that lies on one of these circles
of similitude. 
\begin{lem}
\label{lem:pedal of pt on CS}Let $P_{a}P_{b}P_{c}P_{d}$ be the pedal
quadrilateral of $P$ with respect to $ABCD_{1}$. Then
\begin{itemize}
\item $P_{a}P_{b}P_{c}P_{d}$ is a trapezoid with $P_{a}P_{d}||P_{b}P_{c}$
if and only if $P\in CS(o_{2},o_{4})$;
\item $P_{a}P_{b}P_{c}P_{d}$ is a trapezoid with $P_{a}P_{b}||P_{c}P_{d}$
if and only if $P\in CS(o_{1},o_{3})$. 
\end{itemize}
\end{lem}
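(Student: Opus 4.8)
The plan is to write down, in directed angles, the direction of each side of the pedal quadrilateral $P_aP_bP_cP_d$, so that each of the two parallelism conditions becomes a single inscribed-angle condition on $P$, and then to recognise that condition as membership in a circle of similitude by invoking property (\ref{eq:angle addition for CS}).

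First I would introduce, for each vertex $X\in\{A,B,C,D\}$, the circle $\omega_X$ having $XP$ as diameter. Since the foot of the perpendicular from $P$ to any line through $X$ sees the segment $XP$ at a right angle, it lies on $\omega_X$; hence $P_a,P_d\in\omega_A$, $P_a,P_b\in\omega_B$, $P_b,P_c\in\omega_C$ and $P_c,P_d\in\omega_D$ (here $a=AB$, $b=BC$, $c=CD$, $d=DA$). Applying the directed-angle inscribed-angle theorem (mod $\pi$) to the concyclic quadruple $A,P,P_a,P_d$ on $\omega_A$, and using that $P_a$ lies on line $AB$, that $P_d$ lies on line $AD$, and that $PP_a\perp AB$, $PP_d\perp AD$, I expect to obtain
\[
\angle(AB,\,P_aP_d)=\angle(PA,\,AD)+\pi/2 ,
\]
and, by the identical computation on $\omega_C$, $\angle(BC,\,P_bP_c)=\angle(PC,\,CD)+\pi/2$. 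Subtracting and rearranging should give that $P_aP_d\parallel P_bP_c$ if and only if
\[
\angle(PA,\,PC)=\angle(BA,\,BC)+\angle(DA,\,DC)\pmod{\pi}.
\]
The right-hand side depends only on the quadrilateral: $\angle(BA,BC)$ is the inscribed angle subtended by the chord $AC$ from any point of $o_2=(ABC)$, and $\angle(DA,DC)$ is the inscribed angle subtended by $AC$ from any point of $o_4=(CDA)$.

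To complete the first bullet I would invoke property (\ref{eq:angle addition for CS}) for the pair $o_2,o_4$, whose two points of intersection are precisely $A$ and $C$: it says that for every $M\in CS(o_2,o_4)$ and any $K\in o_2$, $L\in o_4$ one has $\angle(MA,MC)=\angle(KA,KC)+\angle(LA,LC)=\angle(BA,BC)+\angle(DA,DC)$. Thus the locus of points $P$ satisfying the displayed condition, which is a circle through $A$ and $C$, contains $CS(o_2,o_4)$; and since $A\in o_2\cap o_4$ forces $|AO_2|/|AO_4|=R_2/R_4$ and likewise at $C$, both $A$ and $C$ lie on $CS(o_2,o_4)$, so the circle of similitude is itself a circle through $A$ and $C$, and the two coincide. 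The second bullet follows from the same argument carried out with $\omega_B$ and $\omega_D$ in place of $\omega_A,\omega_C$: it reduces $P_aP_b\parallel P_cP_d$ to $\angle(PB,PD)=\angle(AB,AD)+\angle(CB,CD)\pmod{\pi}$, which by property (\ref{eq:angle addition for CS}) applied to the pair $o_1,o_3$ (meeting at $B$ and $D$) is exactly the condition $P\in CS(o_1,o_3)$.

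The step that needs care is the directed-angle bookkeeping in the two displayed identities, together with the degenerate positions of $P$: when $P$ lies on one of the four side-lines a foot of perpendicular collapses onto a vertex, and when $o_2$ and $o_4$ (resp. $o_1$ and $o_3$) are congruent the relevant circle of similitude degenerates into the line $AC$ (resp. $BD$); in each case the argument still goes through once ``circle through $A$ and $C$'' is read in the appropriate possibly-degenerate sense. I expect this bookkeeping, not any structural difficulty, to be the only real obstacle, since all the geometric content is carried by the auxiliary circles $\omega_X$ and by property (\ref{eq:angle addition for CS}).
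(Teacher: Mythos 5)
Your proposal is correct and uses essentially the same ingredients as the paper's proof: the circles on diameters $XP$ containing the feet of the perpendiculars, and the angle-addition property (\ref{eq:angle addition for CS}) of the circle of similitude applied to $o_{2},o_{4}$ (meeting at $A,C$) and to $o_{1},o_{3}$ (meeting at $B,D$). The only difference is organizational -- you compare the directions of $P_{a}P_{d}$ and $P_{b}P_{c}$ directly in directed angles, whereas the paper cuts both lines with the transversal $AC$ and shows co-interior angles are supplementary; your version has the minor advantage of delivering the biconditional in one pass, where the paper argues one direction and asserts the converse "by reasoning backwards."
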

\begin{proof}
Assume that $P\in CS(o_{2},o_{4})$. Let $K=AC\cap P_{a}P_{d}$ and
$L=AC\cap P_{b}P_{c}$. We will show that $\angle AKP_{d}+\angle CLP_{c}=\pi$,
which implies $P_{a}P_{d}||P_{b}P_{c}$. 

Let $\theta=\angle APP_{a}$. Since $AP_{a}PP_{d}$ is cyclic, $\angle AP_{d}P_{a}=\theta$.
Then
\begin{equation}
\angle AKP_{d}=\pi-\alpha_{1}-\theta.\label{eq:angle 1}
\end{equation}
On the other hand, $\angle CLP_{c}=\pi-\gamma_{2}-\angle LP_{c}C$.
Since $PP_{b}CP_{c}$ is cyclic, it follows that $\angle LP_{c}C=\angle P_{b}PC.$ 

We now find the latter angle. Since $P\in CS(o_{2},o_{4})$, by a
property of circle of similitude (see (\ref{eq:angle addition for CS})),
it follows that $\angle APC=\pi+\delta+\beta$. Since $P_{a}PP_{b}B$
is cyclic, $\angle P_{a}PP_{b}=\pi-\beta$. Therefore, $\angle P_{b}PC=\delta-\theta.$
This implies that
\begin{equation}
\angle CLP_{c}=\pi-\gamma_{2}-\delta+\theta.\label{eq:angle 2}
\end{equation}
Adding (\ref{eq:angle 1}) and (\ref{eq:angle 2}), we obtain $ $$\angle AKP_{d}+\angle CLP_{c}=\pi$. 

Reasoning backwards, it is easy to see that $P_{a}P_{d}||P_{b}P_{c}$
implies that $P\in CS(o_{2},o_{4}).$
\end{proof}
\begin{figure}

\includegraphics[scale=0.35]{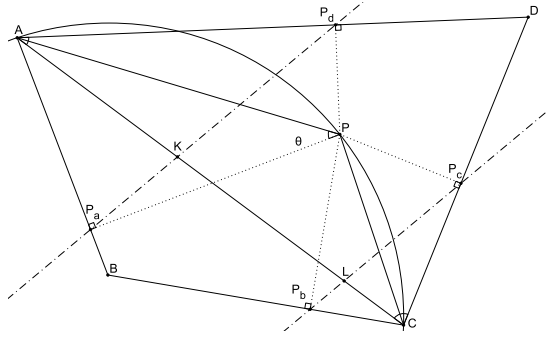}\caption{The pedal quadrilateral of a point on $CS(o_{2},o_{4})$ has two parallel
sides. }

\end{figure}

Let $S$ be the second point of intersection of $CS(o_{1},o_{3})$
and $CS(o_{2},o_{4})$, so that $CS(o_{1},o_{3})\cap CS(o_{2},o_{4})=\{W,S\}$.
The Lemma above implies that the pedal quadrilateral of a point is
a parallelogram if and only if this point is either $W$ or $S$. 
\begin{thm}
\label{thm:Varignon Angles}The pedal quadrilateral of $W$ is a parallelogram
whose angles equal to those of the Varignon parallelogram.\end{thm}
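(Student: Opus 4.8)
The plan is to treat the \emph{parallelogram} part as already settled — it is exactly Lemma~\ref{lem:pedal of pt on CS} applied at $W$, since $W\in CS(o_1,o_3)\cap CS(o_2,o_4)$ (as noted just before the statement) — and to concentrate on identifying the angle. Recall that the Varignon parallelogram of $ABCD$, the quadrilateral of midpoints of the sides, has each of its sides a midsegment of one of the triad triangles, hence has its two pairs of sides parallel to the diagonals $AC$ and $BD$; so its angle is the directed angle $\angle(AC,BD)$ between the diagonals. Thus it suffices to show that the pedal parallelogram $P_aP_bP_cP_d$ of $W$ has angle $\angle(AC,BD)$ at one (hence every) vertex.

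I would work with directed angles of lines modulo $\pi$ and with the two circles through $W$ attached to the vertices $B$ and $C$. Since $\angle WP_aB=\angle WP_bB=\pi/2$, the points $B,P_a,W,P_b$ lie on the circle $\omega_B$ with diameter $WB$, and likewise $C,P_b,W,P_c$ lie on $\omega_C$ with diameter $WC$. Applying the inscribed-angle theorem in $\omega_B$ to the chord $WP_a$ seen from $B$ and from $P_b$, together with $WP_a\perp AB$, gives $\angle(P_aP_b,BC)=\angle(AB,WB)+\pi/2$; the symmetric computation in $\omega_C$ (chord $WP_c$ seen from $C$ and from $P_b$, with $WP_c\perp CD$) gives $\angle(BC,P_bP_c)=\angle(WC,CD)+\pi/2$. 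Adding these, the angle of the pedal parallelogram at $P_b$ is
\[
\angle(P_aP_b,P_bP_c)=\angle(AB,WB)+\angle(WC,CD)\pmod{\pi}.
\]

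The last step is to recognize that this is where the special nature of $W$ enters, via its angle-addition property. Corollary~\ref{cor:Angle addition for W}, read in directed form as $\angle(WB,WC)=\angle(AB,AC)+\angle(DB,DC)$, expresses $\angle(WB,WC)$ purely in terms of sides and diagonals; substituting it into the right-hand side above — after writing each directed angle as a difference of line directions relative to a fixed axis — collapses the expression to $\angle(AC,BD)$ by pure cancellation. Hence the pedal parallelogram of $W$ has the same angle at $P_b$ as the Varignon parallelogram, and therefore — a parallelogram being determined in shape by any one of its angles — the same angles throughout. (Degeneracy is not an extra worry: for a nondegenerate quadrilateral $W$ does not lie on any side-line, so $\omega_B,\omega_C$ are honest circles, and the computed angle vanishes precisely when the diagonals are parallel, i.e.\ exactly when the Varignon parallelogram itself degenerates.)

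The main obstacle is the middle step: carrying out the inscribed-angle bookkeeping with a fixed orientation convention so that the slightly lopsided-looking combination $\angle(AB,WB)+\angle(WC,CD)$ genuinely comes out — one has to be careful which chord is subtended from which two points, and how the perpendicularities $WP_a\perp AB$, $WP_b\perp BC$, $WP_c\perp CD$ enter the chase. Once that expression is in hand, spotting that Corollary~\ref{cor:Angle addition for W} is precisely the identity needed to reduce it to $\angle(AC,BD)$ is immediate.
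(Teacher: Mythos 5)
Your proposal is correct and follows essentially the same route as the paper: take the parallelogram part from Lemma~\ref{lem:pedal of pt on CS}, place $W$ and two adjacent feet on the circle with diameter joining $W$ to a vertex, transfer angles by the inscribed-angle theorem, and invoke the angle-addition property of Corollary~\ref{cor:Angle addition for W} to reduce the pedal angle to the angle between the diagonals, which is the Varignon angle. The only differences are bookkeeping ones: the paper works at vertices $A$ and $C$ and shows the \emph{sum} of two opposite pedal angles equals $2\angle AIC$, whereas you work at $B$ and $C$ with directed angles mod $\pi$ and identify a single pedal angle as $\angle(AC,BD)$ directly; your cancellation $\angle(AB,WB)+\angle(WC,CD)=\angle(AC,BD)$ does check out.
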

\begin{proof}
Since $W\in CS(o_{1},o_{2})\cap CS(o_{3},o_{4})$, property (\ref{eq:angle addition for CS})
of the circle of similitude implies that
\begin{eqnarray*}
\angle AWB & = & \angle ACB+\angle ADB=\gamma_{1}+\delta_{2},\\
\angle CWD & = & \angle CAD+\angle CBD=\alpha_{1}+\beta_{2},
\end{eqnarray*}
where $\alpha_{i},\beta_{i},\gamma_{i},\delta_{i}$ are the angles
between the quadrilateral's sides and diagonals, as before (see Figure
 \ref{fig:angles between sides diagonals}). Let $\angle AWW_{a}=x$
and $\angle W_{c}WC=y$. Since the quadrilaterals $W_{a}WW_{d}A$
and $W_{c}WW_{b}C$ are cyclic, $\angle W_{a}W_{d}A=x$ and $\angle W_{c}W_{b}C=y$.
Therefore, 
\[
\angle W_{a}W_{b}B=\angle AWB-\angle AWW_{a}=\gamma_{1}+\delta_{2}-x,
\]
\[
\angle W_{c}W_{d}D=\angle CWD-\angle W_{c}WC=\alpha_{1}+\beta_{2}-y.
\]

Finding supplements and adding, we obtain

\begin{eqnarray*}
\angle W_{a}W_{d}W_{c}+\angle W_{a}W_{b}W_{c} & = & (\pi-x-\alpha_{1}-\beta_{2}+y)+(\pi-y-\gamma_{1}-\delta_{2}+x)=\\
=2\pi-\alpha_{1}-\beta_{2}-\gamma_{1}-\delta_{2} & = & 2\pi-(2\pi-2\angle AIC)=2\angle AIC,
\end{eqnarray*}
where $\angle AIC$ is the angle formed by the intersection of the
diagonals. Thus, $W_{a}W_{b}W_{c}W_{d}$ is a parallelogram with the
same angles as those of the Varignon parallelogram $M_{a}M_{B}M_{b}M_{c}$,
where $M_{x}$ is the midpoint of side $x$, for any $x\in\{a,b,c,d\}$.
\end{proof}
\begin{figure}
\includegraphics[scale=0.27]{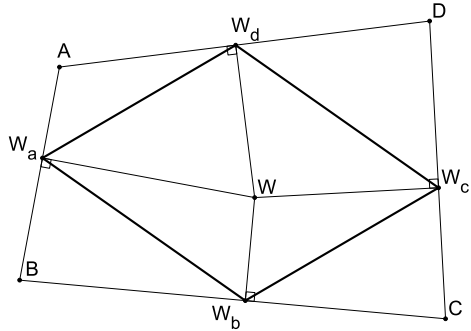}

\caption{\label{fig:Varignon parallelogram}The pedal parallelogram of W. }
\end{figure}

It is interesting to note the following
\begin{cor}
The pedal of $W$ with respect to $ACBD$ (i.e., a self-intersecting
quadrilateral whose sides are the two diagonals and two opposite sides
of the original quadrilateral) is also a parallelogram. 
\end{cor}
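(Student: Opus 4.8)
The plan is to reduce the claim to the results already established for the pedal of $W$ with respect to $Q^{(1)}$, exploiting the fact that $ACBD$ has \emph{the same four vertices} as $ABCD$. A triad circle depends only on the unordered triple of vertices it passes through, so the four triad circles of $ACBD$ are exactly $o_1,o_2,o_3,o_4$, only relabelled. In the paper's indexing convention (the middle vertex of the $i$-th triad circle, in the cyclic order $A,C,B,D$, is the $i$-th vertex), the triad circles $o_1',o_2',o_3',o_4'$ of $ACBD$ are $o_1'=(DAC)=o_4$, $o_2'=(ACB)=o_2$, $o_3'=(CBD)=o_3$, $o_4'=(BDA)=o_1$. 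By Theorem~\ref{thm:W on CSes}, $W$ lies on all six circles of similitude $CS(o_i,o_j)$; in particular it lies on $CS(o_2',o_4')=CS(o_1,o_2)$ and on $CS(o_1',o_3')=CS(o_3,o_4)$.

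Next I would apply Lemma~\ref{lem:pedal of pt on CS} to the quadrilateral $ACBD$. Its proof uses only two ingredients: that the feet of the perpendiculars dropped from a point to two lines through a common vertex are concyclic with the point and the vertex, and directed-angle additivity combined with the angle property~(\ref{eq:angle addition for CS}) of the circle of similitude; neither ingredient cares whether the quadrilateral is convex or self-intersecting, so the lemma applies verbatim. Writing $Q_{a'},Q_{b'},Q_{c'},Q_{d'}$ for the feet of the perpendiculars from $W$ to the sides $AC,\ CB,\ BD,\ DA$ of $ACBD$, the lemma gives $Q_{a'}Q_{d'}\parallel Q_{b'}Q_{c'}$ (since $W\in CS(o_2',o_4')$) and $Q_{a'}Q_{b'}\parallel Q_{c'}Q_{d'}$ (since $W\in CS(o_1',o_3')$). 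Both pairs of opposite sides being parallel, the pedal of $W$ with respect to $ACBD$ is a parallelogram. Alternatively, one can simply note that the proof of Theorem~\ref{thm:Varignon Angles} goes through with $o_1,\dots,o_4$ replaced by $o_1',\dots,o_4'$, which additionally yields the analogue of the Varignon-angle statement.

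For nondegeneracy, observe that the feet on $CB$ and $DA$ are exactly the opposite vertices $W_b$ and $W_d$ of the nondegenerate pedal parallelogram of $W$ with respect to $ABCD$, hence are distinct; and for $Q^{(1)}$ in general position $W$ lies on neither diagonal $AC$ nor $BD$, so all four feet are honest points.

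The only place that requires genuine care is the bookkeeping: checking that Lemma~\ref{lem:pedal of pt on CS} (equivalently Theorem~\ref{thm:Varignon Angles}) is insensitive to self-intersection of the quadrilateral, and correctly matching the triad circles of $ACBD$ with the $o_i$'s so that the two circles of similitude forced to contain $W$ are indeed among the six provided by Theorem~\ref{thm:W on CSes}. Once that identification is pinned down, the corollary is immediate.
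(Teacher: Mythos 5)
Your proof is correct and is essentially the argument the paper intends (the corollary is stated without proof as an immediate consequence): since $ACBD$ has the same four triad circles up to relabelling, Theorem~\ref{thm:W on CSes} places $W$ on the two circles of similitude $CS(o_1,o_2)$ and $CS(o_3,o_4)$ that play the roles of $CS(o_2',o_4')$ and $CS(o_1',o_3')$ for $ACBD$, and Lemma~\ref{lem:pedal of pt on CS} (whose directed-angle proof is indeed insensitive to self-intersection) then gives both pairs of parallel sides. Your careful bookkeeping of the relabelled triad circles and the nondegeneracy remark are welcome additions, but the route is the same.
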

The Theorem above also implies that the pedal of $W$ is nondegenerate.
(We will see later that the pedal of $S$ degenerates to four points
lying on a straight line). While examples show that the pedal of $W$
and the Varignon parallelogram have different ratios of sides (and,
therefore, are not similar in general), it is easy to see that they
coincide in the case of a cyclic quadrilateral:
\begin{cor}
The Varignon parallelogram $M_{a}M_{b}M_{c}M_{d}$ is a pedal parallelogram
of a point if and only if the quadrilateral is cyclic and the point
is the circumcenter. In this case, $M_{a}M_{b}M_{c}M_{d}=W_{a}W_{b}W_{c}W_{d}$. \end{cor}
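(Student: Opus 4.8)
The corollary asserts that the Varignon parallelogram $M_aM_bM_cM_d$ is the pedal parallelogram of some point precisely when $Q^{(1)}$ is cyclic, and in that case that point is the circumcenter and the pedal of $W$ coincides with it.

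\medskip

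The plan is to prove both directions using the pedal/angle machinery already developed. For the "if" direction, suppose $Q^{(1)}=ABCD$ is cyclic with circumcenter $O$. Then the foot of the perpendicular from $O$ to each side is exactly the midpoint of that side (a perpendicular from the center to a chord bisects the chord), so the pedal quadrilateral of $O$ is literally $M_aM_bM_cM_d$, which is the Varignon parallelogram. Moreover, in the cyclic case $W=O$: this is immediate from Theorem~\ref{thm:Inversive Property of W} (when $Q^{(1)}$ is cyclic, $Q^{(2)}$ degenerates to the single point $O$, and the inversive characterization forces $W=O$), or alternatively from the fact that $r=0$ for cyclic quadrilaterals and all triad circles coincide with the circumcircle so $W$, lying on all circles of similitude, must be the common center $O$. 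Hence the pedal of $W$ equals $M_aM_bM_cM_d$.

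\medskip

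For the "only if" direction, suppose the Varignon parallelogram is the pedal parallelogram of some point $P$. By Lemma~\ref{lem:pedal of pt on CS}, any point whose pedal is a parallelogram must lie on both $CS(o_1,o_3)$ and $CS(o_2,o_4)$, hence $P\in\{W,S\}$. So it suffices to pin down when the pedal of $W$ (or of $S$) equals the Varignon parallelogram and to show this forces cyclicity. The cleanest route: a point $P$ has $M_a$ as the foot of its perpendicular to $AB$ iff $P$ lies on the perpendicular bisector of $AB$; requiring all four feet to be the four midpoints simultaneously forces $P$ to lie on all four perpendicular bisectors of the sides, which (for a nondegenerate quadrilateral) can happen only if the four perpendicular bisectors are concurrent, i.e. $ABCD$ is cyclic, and then $P$ is the circumcenter. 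So if the pedal of $P\in\{W,S\}$ is the Varignon parallelogram, $Q^{(1)}$ is cyclic and $P=O=W$; and then automatically $M_aM_bM_cM_d=W_aW_bW_cW_d$.

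\medskip

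The main obstacle is making precise the step "the pedal of $P$ is the Varignon parallelogram $\Rightarrow$ each foot is the corresponding midpoint." One must be careful that "pedal parallelogram equals the Varignon parallelogram as a quadrilateral with labeled vertices" really does mean $P_a=M_a$ etc.; since $M_a$ lies on side $AB$ and $P_a$ (the foot of the perpendicular from $P$) also lies on line $AB$, and the Varignon parallelogram's vertices are precisely the side-midpoints with $M_a$ on $AB$, the matching of $P_a$ with $M_a$ is forced by which side each point sits on. Once that identification is in hand, the concurrency-of-perpendicular-bisectors argument is immediate. (If one wishes to avoid even this, one can instead argue via Theorem~\ref{thm:Varignon Angles}: the pedal of $W$ and the Varignon parallelogram always have equal angles, so they are similar iff they have equal side ratios; a short computation of the two side-ratio formulas shows equality forces $\alpha+\gamma=\pi$, i.e. cyclicity — but the perpendicular-bisector argument is shorter and self-contained.)
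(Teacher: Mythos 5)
Your proof is correct. The paper states this corollary without an explicit proof, presenting it as an immediate consequence of Theorem \ref{thm:Pedal of W} (the pedal is a nondegenerate parallelogram only for $W$) together with the remark that the pedal of $W$ and the Varignon parallelogram are easily seen to coincide in the cyclic case; the surrounding discussion of side ratios suggests the intended argument compares the two parallelograms via the distance relations of Corollary \ref{cor:Isodynamic property}. Your route is more elementary and entirely self-contained: the observation that the foot of the perpendicular from $P$ to $AB$ equals the midpoint $M_{a}$ if and only if $P$ lies on the perpendicular bisector of $AB$ reduces the ``only if'' direction to the concurrency of the four perpendicular bisectors, which is precisely cyclicity, and it does not even require first restricting to $P\in\{W,S\}$ via Lemma \ref{lem:pedal of pt on CS}; the ``if'' direction (a perpendicular from the center bisects a chord) is equally immediate, and your care about matching $P_{a}$ with $M_{a}$ rather than some other vertex is appropriate and correctly resolved. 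The one point worth flagging is the identification $W=O$ in the cyclic case: the paper defines $W$ only for noncyclic quadrilaterals, so your appeals to Theorem \ref{thm:Inversive Property of W} and to ``$W$ lies on all circles of similitude'' are not literally available there (the second-generation triad circles degenerate to a point, and the circle of similitude of two coincident circles is undefined); the clean justification is the other one you give, namely that the iterative process collapses to $O$ at the first step, so the limit point is $O$. Since the corollary's final assertion presupposes exactly this convention, this is a matter of phrasing rather than a gap.
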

\begin{thm}
\label{thm:Pedal of W}The pedal quadrilateral of a point with respect
to quadrilateral $ABCD$ is a nondegenerate parallelogram if and only
if this point is $W$. \end{thm}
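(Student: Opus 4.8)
The plan is to show that the property ``the pedal quadrilateral is a nondegenerate parallelogram'' pins down exactly one point, and that this point must be $W$. By Lemma \ref{lem:pedal of pt on CS}, the pedal $P_aP_bP_cP_d$ of a point $P$ is a trapezoid with $P_aP_d\parallel P_bP_c$ precisely when $P\in CS(o_2,o_4)$, and a trapezoid with $P_aP_b\parallel P_cP_d$ precisely when $P\in CS(o_1,o_3)$. Hence the pedal is a parallelogram if and only if $P$ lies on both circles of similitude, i.e.\ $P\in CS(o_1,o_3)\cap CS(o_2,o_4)=\{W,S\}$. So first I would record this reduction: the only candidates are $W$ and $S$.

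Next I would rule out $S$ by showing its pedal quadrilateral is degenerate (the four feet are collinear), so that $W$ is the unique point whose pedal is a \emph{nondegenerate} parallelogram. The cleanest way is to observe that $S$ is the Simson point promised in the introduction --- but since that has not yet been established in the excerpt, I would instead argue directly. Using the same angle-chase as in the proof of Theorem \ref{thm:Varignon Angles}, but now with the angle relation coming from $S$ lying on the \emph{other} pair of circles of similitude $CS(o_1,o_3)$ and $CS(o_2,o_4)$ rather than $CS(o_1,o_2)$ and $CS(o_3,o_4)$, one finds that the ``parallelogram'' $S_aS_bS_cS_d$ has a pair of angles summing to $0$ (mod $\pi$) in the directed sense with the wrong sign, forcing the four points onto a line. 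Concretely: from $S\in CS(o_1,o_3)$ and $S\in CS(o_2,o_4)$ one gets $\angle ASB$ and $\angle CSD$ whose combination with the cyclic quadrilaterals $S_aSS_dA$, $S_bSS_cC$, etc., yields $\angle S_aS_dS_c+\angle S_aS_bS_c\equiv 0\pmod\pi$ instead of $2\angle AIC$, i.e.\ the pedal degenerates. It remains to check nondegeneracy of $W$'s pedal, which is exactly the content of Theorem \ref{thm:Varignon Angles} (the angles equal the Varignon angles, which are nonzero for a nondegenerate quadrilateral).

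Putting these together: a point has nondegenerate-parallelogram pedal $\iff$ it is $W$ or $S$, and $S$ is excluded, so the point is $W$.

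The main obstacle I anticipate is cleanly distinguishing $W$ from $S$ via the sign of the relevant directed-angle sum: one must be careful that $W$ sits on $CS(o_1,o_2)$ and $CS(o_3,o_4)$ (which is what the proof of Theorem \ref{thm:Varignon Angles} uses), whereas $S$ sits on $CS(o_1,o_3)$ and $CS(o_2,o_4)$ and \emph{only} on those two of the six circles of similitude in general, so the angle-addition property (\ref{eq:angle addition for CS}) must be applied to the correct pair. A subtlety is that $W$ also lies on $CS(o_1,o_3)$ and $CS(o_2,o_4)$, so both $W$ and $S$ genuinely produce parallelogram pedals; the distinction is purely nondegeneracy. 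One should also note the degenerate edge cases (trapezoids, cyclic quadrilaterals) where $W$ and $S$ may coincide or lie at infinity, but the hypothesis ``nondegenerate parallelogram'' sidesteps these. An alternative, perhaps slicker finish: invoke the forthcoming fact (Lemma analogous to \ref{lem:pedal of pt on CS} combined with the Simson-point discussion) that $S$'s pedal lies on the Simson line, hence is degenerate --- but since this comes later in the paper, the self-contained angle-chase above is the safer route to include here.
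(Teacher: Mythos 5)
Your reduction to the two candidates $W$ and $S$ via Lemma \ref{lem:pedal of pt on CS} is fine and is exactly what the paper records immediately after that lemma. The gap is in the exclusion of $S$. You correctly sense that you cannot simply invoke the Simson point here --- the paper in fact \emph{derives} the degeneracy of the pedal of $S$ (Theorem \ref{Quadrilateral Simson Line}) as a consequence of this very theorem --- but the replacement argument you sketch does not close the gap. Knowing that a quadrilateral with both pairs of opposite sides parallel has two opposite directed angles summing to $0 \pmod{\pi}$ does not force its four vertices onto a line: in the directed-angle calculus a genuine parallelogram already has equal opposite angles, so their sum is $2\theta$, which also vanishes mod $\pi$ when $\theta=\pi/2$. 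Moreover, the angle-addition property (\ref{eq:angle addition for CS}) applied to $CS(o_{1},o_{3})$ and $CS(o_{2},o_{4})$ controls $\angle ASC$ and $\angle BSD$ --- angles subtended by the \emph{diagonals} --- not $\angle ASB$ and $\angle CSD$ as in the proof of Theorem \ref{thm:Varignon Angles}, so that angle chase does not transplant directly; you would have to carry it out in full and show that the two parallel directions of the pedal actually coincide.

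The paper sidesteps this by using lengths rather than directions for the uniqueness half. If the pedal is a nondegenerate parallelogram, then $|P_{a}P_{d}|=|P_{b}P_{c}|$ and $|P_{a}P_{b}|=|P_{c}P_{d}|$; since $P_{d}AP_{a}P$ and its analogues are cyclic, these equalities translate into $|PA|:|PC|=\sin\gamma:\sin\alpha$ and $|PB|:|PD|=\sin\delta:\sin\beta$. Hence $P$ lies on two explicit Apollonian circles, identified as $CS(o_{1}^{(0)},o_{3}^{(0)})$ and $CS(o_{2}^{(0)},o_{4}^{(0)})$, the circles of similitude of the previous-generation triad circles. These meet in $W$ and one other point $Y$, and a computation of the distance ratios from $Y$ shows that its pedal is an isosceles trapezoid rather than a parallelogram, leaving $W$ as the only possibility. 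If you wish to keep your route, you must supply an actual proof that the pedal of $S$ is degenerate; the cleanest self-contained way is precisely this length argument, which, applied at $S$, shows $S$ cannot have a nondegenerate parallelogram pedal, whence by Lemma \ref{lem:pedal of pt on CS} its pedal is a degenerate one, i.e.\ collinear.
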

\begin{proof}
By Lemma \ref{lem:pedal of pt on CS}, if $P\in CS(o_{1},o_{3})\cap CS(o_{2},o_{4}),$
then both pairs of opposite sides of the pedal quadrilateral $P_{a}P_{b}P_{c}P_{d}$
are parallel. 

Assume that the pedal quadrilateral $P_{a}P_{b}P_{c}P_{d}$ of $P$
is a nondegenerate parallelogram. Since $P_{d}AP_{a}P$ is a cyclic
quadrilateral,
\begin{eqnarray*}
|P_{a}P_{d}| & = & \frac{|PA|}{2\sin\alpha},\\
|P_{b}P_{c}| & = & \frac{|PC|}{2\sin\gamma}.
\end{eqnarray*}
The assumption $|P_{a}P_{d}|=|P_{b}P_{c}$| implies that $|PA|:|PC|=\sin\gamma:\sin\alpha.$
Similarly, $|P_{a}P_{b}|=|P_{c}P_{d}|$ implies $|PB|:|PD|=\sin\delta:\sin\beta,$
so that $P$ must be on the Apollonian circle with respect to $A,C$
with ratio $\sin\gamma:\sin\alpha$ and on the Apollonian circle with
respect to $B,D$ with ratio $\sin\delta:\sin\beta$. These Apollonian
circles are easily shown to be $CS(o_{1}^{(0)},o_{3}^{(0)})$ and
$CS(o_{2}^{(0)},o_{4}^{(0)})$, the circles of similitude of the previous
generation quadrilateral. One of the intersections of these two circles
of similitude is $W$. Let $Y$ be the other point of intersection.
Computing the ratios of distances from $Y$ to the vertices, one can
show that the pedal of $Y$ is an isosceles trapezoid. That is, instead
of two pairs of equal opposite sides, it has one pair of equal opposite
sides and two equal diagonals. This, in particular, means that $Y$
does not lie on $CS(o_{1},o_{3})\cap CS(o_{2},o_{4})$. It follows
that $W$ is the only point for which the pedal is a nondegenerate
parallelogram.\end{proof}
\begin{rem*}
Note that another interesting pedal property of a quadrilateral was
proved by Lawlor in \cite{Lawlor four pedals 1,Lawlor four pedals 2}.
For each vertex, consider its pedal triangle with respect to the triangle
formed by the remaining vertices. The four resulting pedal triangles
are directly similar to each other. Moreover, the center of similarity
is the so-called \emph{nine-circle point}, denoted by $H$ in Scimemi's
paper \cite{Scimemi}.
\end{rem*}

\subsection{Simson line of a quadrilateral }

Recall that for any point on the circumcircle of a triangle, the feet
of the perpendiculars dropped from the point to the triangle's sides
lie on a line, called the \emph{Simson line }corresponding to the
point (see Figure \ref{fig:Simson line}). Remarkably, in the case
of a quadrilateral, Lemma \ref{lem:pedal of pt on CS} and Theorem
\ref{thm:Pedal of W} imply that there exists a unique point for which
the feet of the perpendiculars dropped to the sides are on a line
(see Theorem \ref{Quadrilateral Simson Line} below). 

In the case of a noncyclic quadrilateral, this point turns out to
be the second point of intersection of $CS(o_{1},o_{3})$ and $CS(o_{2},o_{4})$,
which we denote by $S$. For a cyclic quadrilateral $ABCD$ with circumcenter
$O$, even though all triad circles coincide, one can view the circles
$(BOD)$ and $(AOC)$ as the replacements of $CS(o_{1},o_{3})$ and
$CS(o_{2},o_{4})$ respectively. The second point of intersection
of these two circles, $S\in(BOD)\cap(AOC)$, $S\neq W$ also has the
property that the feet of the perpendiculars to the sides lie on a
line. Similarly to the noncyclic case (see Lemma \ref{lem:pedal of pt on CS}),
one can start by showing that the pedal quadrilateral of a point is
a trapezoid if and only if the point lies on one of the two circles,
$(BOD)$ or $(AOC)$. 

In analogy with the case of a triangle, we will call the line $S_{a}S_{b}S_{c}S_{d}$
the \emph{Simson line} and $S$ the \emph{Simson point} \emph{of a
quadrilateral}, see Fig.  \ref{fig:Simson pt line 4gon}. 

\begin{flushleft}
\begin{figure}
\includegraphics[scale=0.22]{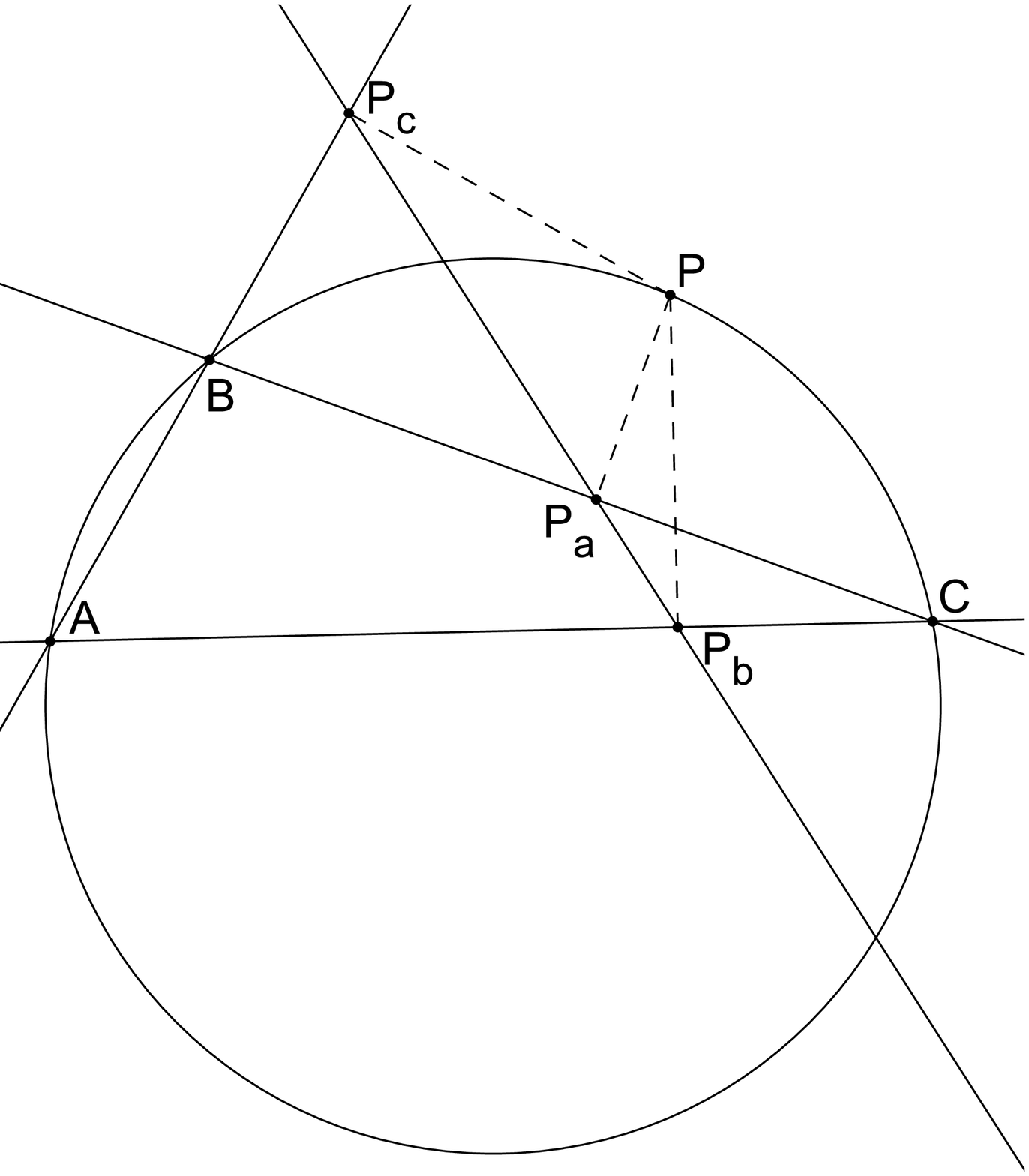}\includegraphics[scale=0.15]{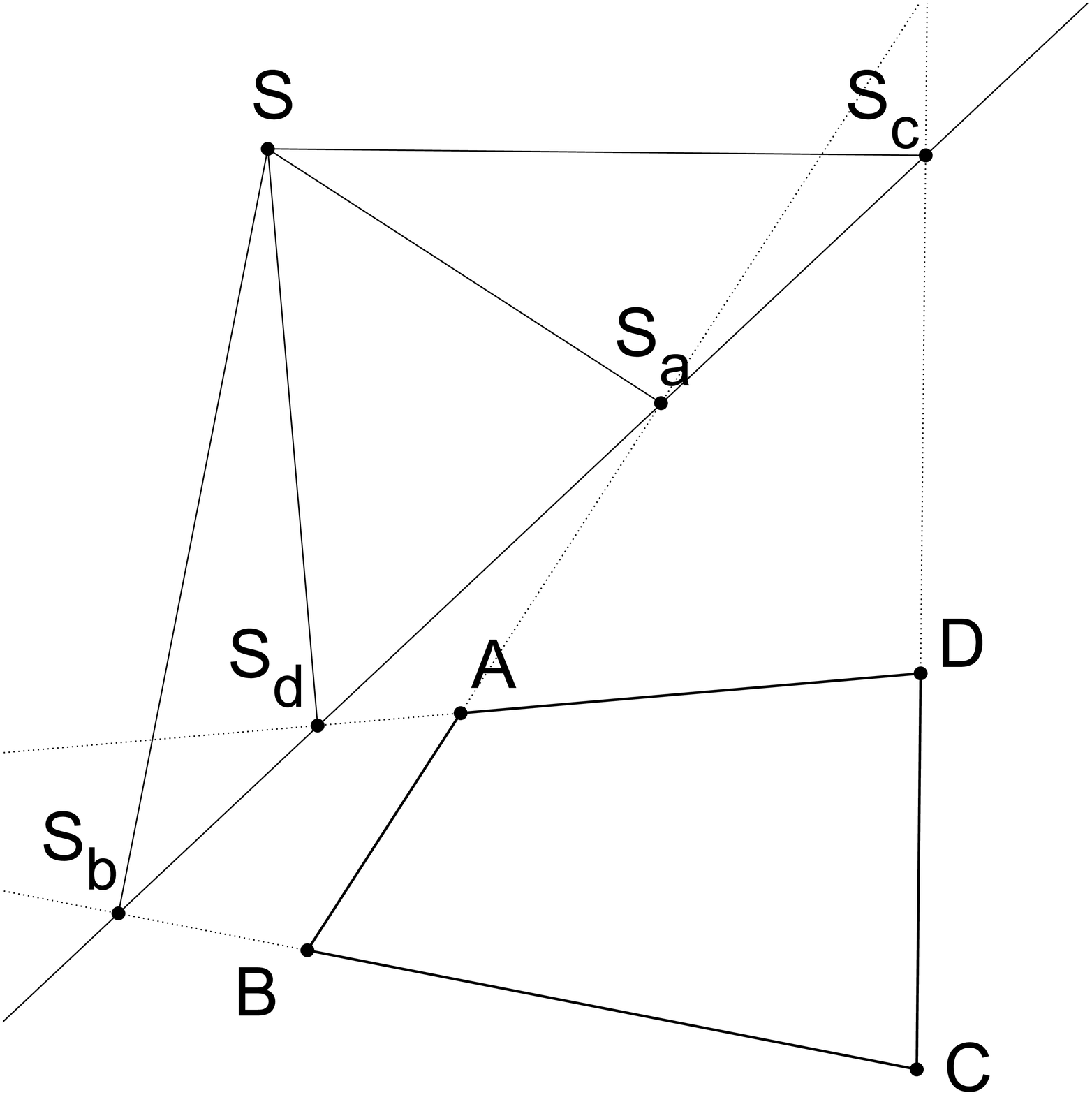}\caption{\label{fig:Simson line}A Simson line for a triangle and \label{fig:Simson pt line 4gon}the
Simson line of a quadrilateral.}

\end{figure}

\par\end{flushleft}
\begin{thm}
\label{Quadrilateral Simson Line}(The Simson line of a quadrilateral)
The feet of the perpendiculars dropped to the sides from a point on
the plane of a quadrilateral lie on a straight line if and only if
this point is the Simson point.
\end{thm}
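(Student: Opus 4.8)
The plan is to mirror the structure used to characterize $W$ in Theorem \ref{thm:Pedal of W}, replacing "parallelogram" by "four collinear points" throughout. The key observation is that a quadrilateral degenerates to four points on a line precisely when it is a "trapezoid in both directions" in the limiting sense: both pairs of opposite sides are parallel \emph{and} the figure has zero area. So I would first use Lemma \ref{lem:pedal of pt on CS}: if the pedal $P_aP_bP_cP_d$ of a point $P$ lies on a line, then in particular $P_aP_d \| P_bP_c$ and $P_aP_b \| P_cP_d$ (two parallel lines that coincide), so by Lemma \ref{lem:pedal of pt on CS} the point $P$ must lie on both $CS(o_2,o_4)$ and $CS(o_1,o_3)$. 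Hence $P \in \{W, S\}$. Since Theorem \ref{thm:Pedal of W} tells us the pedal of $W$ is a \emph{nondegenerate} parallelogram, the only remaining candidate is $S$.

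Next I would prove the converse: the pedal of $S$ actually is degenerate (four collinear points). Here I would reuse the side-length formulas from the proof of Theorem \ref{thm:Pedal of W}, namely $|P_aP_d| = \frac{|PA|}{2\sin\alpha}$, $|P_bP_c| = \frac{|PC|}{2\sin\gamma}$, and the analogous formulas for the other pair. Since $S \in CS(o_1,o_3) \cap CS(o_2,o_4)$, these give $|S_aS_d| = |S_bS_c|$ and $|S_aS_b| = |S_cS_d|$, so $S_aS_bS_cS_d$ has both pairs of opposite sides parallel \emph{and} equal in length — i.e., it is a parallelogram or degenerates to a segment. To rule out the nondegenerate parallelogram, I would invoke the fact established just before Theorem \ref{thm:Pedal of W} that $W$ is the \emph{unique} point whose pedal is a nondegenerate parallelogram (equivalently, in the proof of Theorem \ref{thm:Pedal of W}, computing the ratios of distances from $S$ to the vertices shows the pedal of $S$ is not a genuine parallelogram). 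Since $S \neq W$, the pedal of $S$ must be the degenerate case: the four feet lie on a line.

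The step I expect to be the main obstacle is establishing rigorously that the pedal of $S$ is \emph{actually} degenerate rather than a nondegenerate parallelogram — the "both pairs of opposite sides parallel and equal" conclusion a priori allows a genuine parallelogram, and one must pin down which of $W$ and $S$ is which. The cleanest route is to observe that the two circles of similitude $CS(o_1,o_3)$ and $CS(o_2,o_4)$ meet $o_1 \cap o_3$–type configurations in a controlled way; concretely, one shows that a directed-angle computation at $S$ (using property \eqref{eq:angle addition for CS} of the circle of similitude, as in the proof of Theorem \ref{thm:Varignon Angles}) forces the two "parallel" sides $S_aS_d$ and $S_bS_c$ to lie on the \emph{same} line rather than on distinct parallel lines. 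Alternatively, and more economically, one simply cites Theorem \ref{thm:Pedal of W}: since the pedal of $W$ is a nondegenerate parallelogram and no other point has this property, and since the pedal of $S$ is forced by the above to be either a nondegenerate parallelogram or a degenerate one, it must be degenerate; then "both pairs of opposite sides parallel" means all four feet are collinear. This handles the noncyclic case; the cyclic case is identical with $(BOD)$ and $(AOC)$ in place of $CS(o_1,o_3)$ and $CS(o_2,o_4)$, using the trapezoid criterion noted in the text just before the theorem statement.
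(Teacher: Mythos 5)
Your final (``economical'') argument is exactly the paper's intended proof: the paper gives no separate argument for this theorem, stating only that it follows from Lemma \ref{lem:pedal of pt on CS} and Theorem \ref{thm:Pedal of W} --- collinear feet force both pairs of opposite pedal sides to be parallel, hence $P\in CS(o_{1},o_{3})\cap CS(o_{2},o_{4})=\{W,S\}$; since $W$ is the unique point whose pedal is a nondegenerate parallelogram, the pedal of $S$ must be the degenerate case, i.e.\ four collinear points.

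One intermediate claim in your write-up is wrong, though it is not load-bearing. From $S\in CS(o_{1},o_{3})\cap CS(o_{2},o_{4})$ you cannot conclude $|S_{a}S_{d}|=|S_{b}S_{c}|$ and $|S_{a}S_{b}|=|S_{c}S_{d}|$: in the proof of Theorem \ref{thm:Pedal of W} those length equalities place the point on the Apollonian circles $CS(o_{1}^{(0)},o_{3}^{(0)})$ and $CS(o_{2}^{(0)},o_{4}^{(0)})$ of the \emph{previous} generation, which meet $CS(o_{1},o_{3})$ and $CS(o_{2},o_{4})$ at $W$ but not, in general, at $S$ (the paper's proof of Theorem \ref{thm:Pedal of W} explicitly notes that the second intersection $Y$ of the Apollonian circles does not lie on $CS(o_{1},o_{3})\cap CS(o_{2},o_{4})$). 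Indeed, for four collinear feet there is no reason for opposite ``sides'' to have equal lengths. What membership in $CS(o_{1},o_{3})\cap CS(o_{2},o_{4})$ gives you is only the parallelism of Lemma \ref{lem:pedal of pt on CS}, and that already suffices: both pairs of opposite sides parallel means the pedal is either a genuine parallelogram or collinear, and Theorem \ref{thm:Pedal of W} rules out the former for $S\neq W$. Dropping the length computation leaves a clean proof identical to the paper's.
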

Unlike in the case of a triangle, where every point on the circumcircle
produces a Simson line, the Simson line of a quadrilateral is unique.
When the original quadrilateral is a trapezoid, the Simson point is
the point of intersection of the two nonparallel sides. In particular,
when the original quadrilateral is a parallelogram, the Simson point
is point at infinity. The existence of this point is also mentioned
in \cite{Johnson-book}.

Recall that all circles of similitude intersect at $W$. The remaining
${6 \choose 2}=15$ intersections of pairs of circles of similitude
are the Simson points with respect to the ${6 \choose 4}=15$ quadrilaterals
obtained by choosing $4$ out of the lines forming the complete quadrangle.
Thus for each of the $15$ quadrilaterals associated to a complete
quadrangle  there is a Simson point lying on a pair of circles of
similitude.

\subsection{Isogonal conjugation with respect to a quadrilateral}

Recall that the isogonal conjugate of the first isodynamic point of
a triangle is the Fermat point (i.e., the point minimizing the sum
of the distances to vertices of the triangle). Continuing to explore
the analogy of $W$ with the isodynamic point, we will now define
isogonal conjugation with respect to a quadrilateral and study the
properties of $W$ and $S$ with respect to this operation. 

Let $P$ be a point on the plane of $ABCD$. Let $l_{A},l_{B},l_{C},l_{D}$
be the reflections of the lines $AP,BP,CP,DP$ in the  bisectors of
$\angle A$, $\angle B$, $\angle C$ and $\angle D$ respectively. 
\begin{defn*}
\label{def: isogonal conj for quad}Let $P_{A}=l_{A}\cap l_{B}$,
$P_{B}=l_{B}\cap l_{C},$ $P_{C}=l_{C}\cap l_{D},$ $P_{D}=l_{D}\cap l_{A}$.
The quadrilateral $P_{A}P_{B}P_{C}P_{D}$ will be called the \emph{isogonal
conjugate of $P$ }with respect to $ABCD$ and denoted by $Iso_{ABCD}(P)$.
\begin{figure}
\includegraphics[scale=0.24]{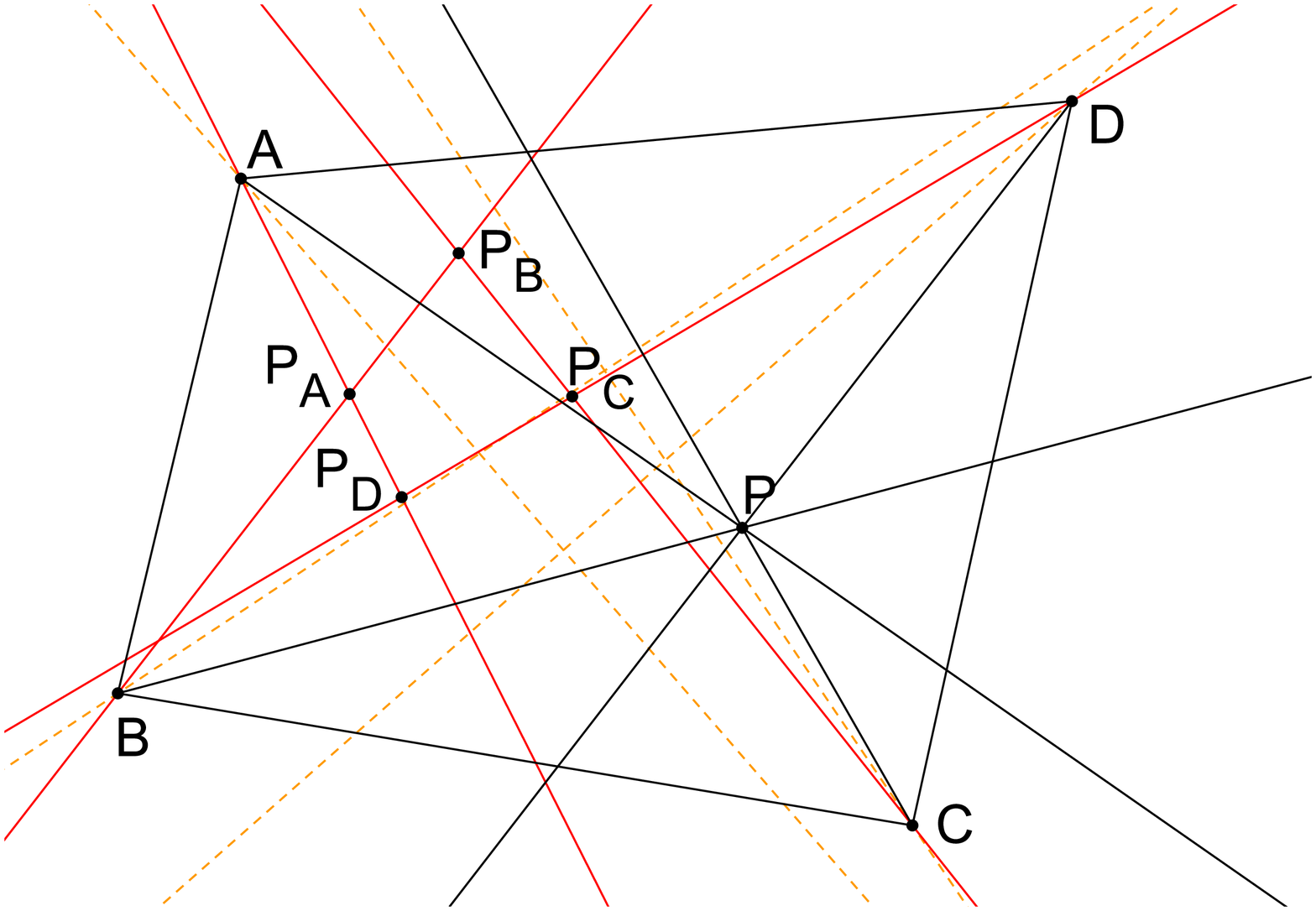}

\caption{Isogonal conjugation with respect to a quadrilateral}

\end{figure}

\end{defn*}
The following Lemma relates the isogonal conjugate and pedal quadrilaterals
of a given point:
\begin{lem}
\label{lem:isog and pedal} The sides of the isogonal conjugate quadrilateral
and the pedal quadrilateral of a given point are perpendicular to
each other.\end{lem}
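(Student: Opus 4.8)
The plan is to work locally at a single side of the quadrilateral, say side $AB$, and compare the direction of the pedal segment $P_aP_b$ (where $P_a$ is the foot of the perpendicular from $P$ to $AB$... wait, let me be careful with the indexing convention) with the direction of the isogonal side $P_A = l_A \cap l_B$. Actually, since both $P_a$ (foot on a side through $A$) and $P_b$ and the isogonal line $l_A$ are all tied to the vertex $A$ and the cevian $AP$, the natural strategy is: fix the vertex $A$, look at the two sides $DA$ and $AB$ emanating from it, and show that the line $l_A$ (reflection of $AP$ in the bisector of $\angle A$) is perpendicular to the segment joining the two pedal feet on those two sides. Then the same argument applied at every vertex gives that each side of $Iso_{ABCD}(P)$ is perpendicular to the corresponding side of the pedal quadrilateral.

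First I would set up the vertex-local picture: let the two sides through $A$ be the lines $AD$ and $AB$, let $X$ be the foot of the perpendicular from $P$ to line $AB$ and $Y$ the foot on line $AD$. Then $A, X, P, Y$ are concyclic on the circle with diameter $AP$ (the pedal circle fact already used repeatedly in the paper, e.g. in the proof of Theorem \ref{thm:Pedal of W}). On this circle, the chord $XY$ subtends the inscribed angle $\angle XAY = \angle A$ at $A$; its perpendicular-from-center direction, equivalently the direction of $XY$ itself, is determined by the positions of $X$ and $Y$ on the circle, which are determined by the directed angles $\angle(AB, AP)$ and $\angle(AD, AP)$. Next I would compute the direction of $l_A$: by definition $l_A$ is obtained from line $AP$ by reflection in the bisector of $\angle DAB$, so $\angle(AB, l_A) = \angle(AP, AD)$ and $\angle(AD, l_A) = \angle(AP, AB)$ as directed angles mod $\pi$. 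The claim then reduces to the identity that the directed angle from $AB$ to $XY$ differs from the directed angle from $AB$ to $l_A$ by exactly $\pi/2$.

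The cleanest way to nail that identity is via directed angles on the pedal circle: the direction of the chord $XY$ equals the direction of the tangent to the circle at, say, its midpoint, or more usefully, in the circle with diameter $AP$ one has $\angle(XY, XP) = \angle(AY, AP) = \angle(AD, AP)$ (inscribed angles subtending the same arc $YP$), and $XP \perp AB$, so $\angle(XY, AB^{\perp}) = \angle(AD, AP)$, i.e. $\angle(XY, AB) = \angle(AD, AP) + \pi/2$. On the other hand $\angle(l_A, AB) = \angle(AD, AP)$ by the reflection-in-bisector computation above. Subtracting gives $\angle(XY, l_A) = \pi/2$, which is exactly perpendicularity of the pedal side $XY$ and the isogonal side $l_A$. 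Running this at $A$ with sides $\{DA, AB\}$, at $B$ with sides $\{AB, BC\}$, at $C$ with sides $\{BC, CD\}$, at $D$ with sides $\{CD, DA\}$ produces the four perpendicularity statements between corresponding sides of $Iso_{ABCD}(P)$ and the pedal quadrilateral $P_aP_bP_cP_d$.

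I expect the main obstacle to be purely bookkeeping: matching the paper's indexing of the pedal feet and of the isogonal vertices $P_A, P_B, P_C, P_D$ so that "corresponding sides" really are the ones the argument pairs up, and handling everything with directed angles mod $\pi$ so that the statement is uniform for convex, concave, and self-intersecting configurations and does not break when $P$ lies on a side or a line through two vertices. There is no real geometric difficulty — the single-vertex lemma (reflection of a cevian in the angle bisector is perpendicular to the chord of pedal feet on the two sides at that vertex) is elementary and follows from the diameter-$AP$ pedal circle — so the write-up should consist of stating that local lemma carefully with directed angles, proving it once, and then invoking it four times.
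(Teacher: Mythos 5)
Your proposal is correct and follows essentially the same route as the paper: the paper's proof also works vertex by vertex, using the cyclic quadrilateral $AP_{a}PP_{d}$ on the circle with diameter $AP$ to get $\angle P_{d}AP=\angle P_{d}P_{a}P$ and concluding that $l_{A}\perp P_{a}P_{d}$. Your directed-angle write-up is just a more careful version of the same angle chase.
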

\begin{proof}
Let $b_{A}$ be the  bisector of the  $\angle DAB$. Let $I=l_{A}\cap P_{a}P_{d}$
and $J=b_{A}\cap P_{a}P_{d}$. Since $AP_{a}PP_{d}$ is cyclic, it
follows that $\angle P_{d}AP=\angle P_{d}P_{a}P$. Since $PP_{a}\perp P_{a}A$,
it follows that $AI\perp P_{a}P_{d}$. Therefore, $P_{A}P_{D}\perp P_{a}P_{d}$.
The same proof works for the other sides, of course. 
\end{proof}
\begin{figure}
\includegraphics[scale=0.15]{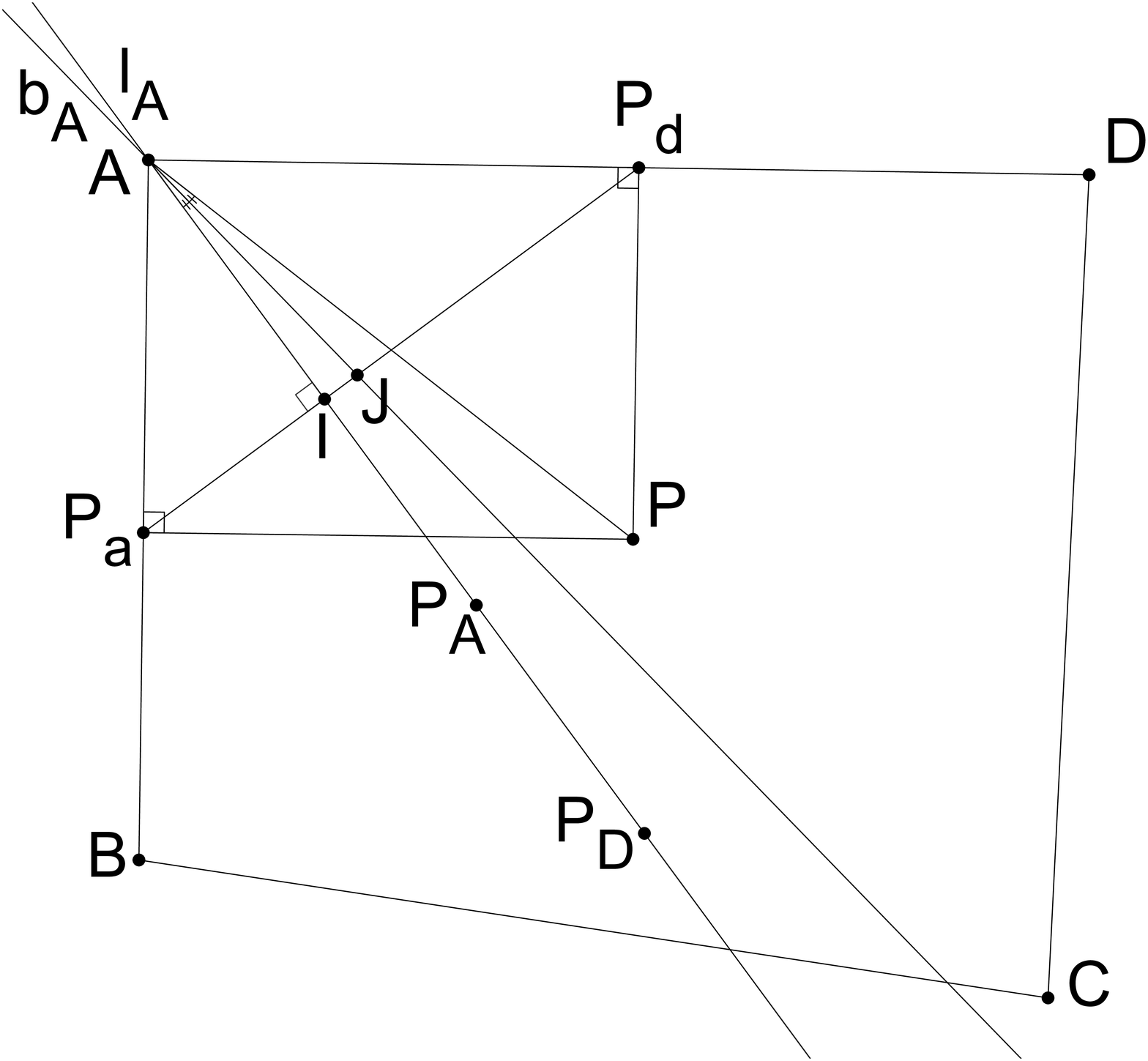}

\caption{Lemma \ref{lem:isog and pedal}.}
\end{figure}

The Lemma immediately implies the following properties of the isogonal
conjugates of $W$ and $S$: 
\begin{thm}
\label{thm: isogonal conjugates of W and S}The isogonal conjugate
of $W$ is a parallelogram. The isogonal conjugate of $S$ is the
degenerate quadrilateral whose four vertices coincide at infinity. 
\end{thm}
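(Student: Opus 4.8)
The plan is to deduce Theorem \ref{thm: isogonal conjugates of W and S} directly from Lemma \ref{lem:isog and pedal} together with the two pedal results already established, namely Theorem \ref{thm:Varignon Angles} (the pedal of $W$ is a parallelogram) and the fact that the pedal of $S$ degenerates to four collinear points (Theorem \ref{Quadrilateral Simson Line}). By Lemma \ref{lem:isog and pedal}, each side of $\mathrm{Iso}_{ABCD}(P)$ is perpendicular to the corresponding side of the pedal quadrilateral $P_aP_bP_cP_d$. Rotation by $\pi/2$ is an affine map that preserves parallelism and carries lines to lines, so it takes the pedal quadrilateral to a quadrilateral directly similar (up to the $90^\circ$ turn) to the isogonal conjugate in the sense that corresponding sides are parallel. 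Hence the isogonal conjugate has a pair of parallel opposite sides exactly when the pedal does, and it is a parallelogram exactly when the pedal is.

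For $W$: since the pedal quadrilateral $W_aW_bW_cW_d$ is a nondegenerate parallelogram (Theorem \ref{thm:Varignon Angles}, Theorem \ref{thm:Pedal of W}), both pairs of its opposite sides are parallel; rotating each side by $\pi/2$, both pairs of opposite sides of $\mathrm{Iso}_{ABCD}(W)$ are parallel as well, so $\mathrm{Iso}_{ABCD}(W)$ is a parallelogram. (One can even note its angles are those of the pedal parallelogram, hence of the Varignon parallelogram, by the same rotation argument.) For $S$: by Theorem \ref{Quadrilateral Simson Line} the pedal $S_aS_bS_cS_d$ consists of four points on the Simson line $\ell$. Then $l_A \perp S_aS_d$, but $S_a, S_d \in \ell$, so $l_A \perp \ell$; likewise $l_B \perp \ell$, $l_C \perp \ell$, $l_D \perp \ell$. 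Thus $l_A, l_B, l_C, l_D$ are four lines all perpendicular to the single line $\ell$, hence mutually parallel, and their pairwise intersections $P_A = l_A\cap l_B$, etc., all lie at the single point at infinity in the direction orthogonal to $\ell$. Therefore $\mathrm{Iso}_{ABCD}(S)$ is the degenerate quadrilateral with all four vertices at that point at infinity.

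I expect no serious obstacle here, since the statement is essentially a corollary of the preceding lemma and the pedal classification; the only point requiring a little care is the degenerate case. When $S_aS_bS_cS_d$ genuinely collapses so that two of the feet coincide, or when $P$ lies on a side of $ABCD$, the lines $l_A,\dots,l_D$ may be ill-defined or a pedal foot may degenerate; one should either restrict to a quadrilateral in general position (as is done throughout the paper) or observe that the limiting statement — four parallel lines, empty intersection in the affine plane — persists by continuity. A second minor subtlety is making sure the perpendicularity in Lemma \ref{lem:isog and pedal} is between the correct pairs of sides ($P_AP_D \perp P_aP_d$, $P_AP_B \perp P_aP_b$, and cyclically), so that the parallel classes match up correctly under the $90^\circ$ rotation; this is exactly what the proof of that lemma gives, so invoking it verbatim suffices.
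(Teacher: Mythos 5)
Your proof is correct and follows exactly the route the paper intends: the paper states this theorem as an immediate consequence of Lemma \ref{lem:isog and pedal} combined with the pedal results for $W$ and $S$, and your write-up simply fills in the (easy) details of that deduction. Nothing further is needed.
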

The latter statement can be viewed as an analog of the following property
of isogonal conjugation with respect to a triangle: the isogonal conjugate
of any point on the circumcircle is the point at infinity.

\subsection{Reconstruction of the quadrilateral}

The paper by Scimemi \cite{Scimemi} has an extensive discussion of
how one can reconstruct the quadrilateral from its central points.
Here we just want to point out the following 3 simple constructions:
\begin{enumerate}
\item Given $W$ and its pedal parallelogram $W_{a}W_{b}W_{c}W_{d}$ with
respect to $A_{1}B_{1}C_{1}D_{1}$, one can reconstruct $A_{1}B_{1}C_{1}D_{1}$
by drawing lines through $W_{a},W_{b},W_{c},W_{d}$ perpendicular
to $WW_{a},WW_{b},WW_{c},WW_{d}$ respectively. The construction is
actually simpler than reconstructing $A_{1}B_{1}C_{1}D_{1}$ from
midpoints of sides (i.e., vertices of the Varignon parallelogram)
and the point of intersection of diagonals. 
\item Similarly, one can reconstruct the quadrilateral from the Simson point
$S$ and the four pedal points of $S$ on the Simson line.
\item Given three vertices $A_{1},B_{1},C_{1}$ and $W$, one can reconstruct
$D_{1}$. Here is one way to do this. The given points determine the
circles $o_{2}=(A_{1}B_{1}C_{1})$, $CS(o_{2},o_{1})=(A_{1}WB_{1})$
and $CS(o_{2},o_{3})=(B_{1}WC_{1})$. Given $o_{2}$ and $CS(o_{2},o_{1}),$
we construct the center of $o_{1}$ as $A_{2}=\mbox{Inv}_{CS(o_{2},o_{1})}(B_{2})$
(see property \ref{enu:CS Inversion exchanges centers} in the Preliminaries
of Section \ref{sec: W}). Similarly, $C_{2}=\mbox{Inv}_{CS(o_{2},o_{3})}(B_{2}).$
Then $D_{1}$ is the second point of intersection of $o_{1}$ (the
circle centered at $A_{2}$ and going through $A_{1},B_{1}$) and
$o_{3}$ (the circle centered at $C_{2}$ and going through $B_{1},C_{1}$).
Alternatively, one can use the property that $D_{1}=\mbox{Iso}_{T_{2}}\circ\mbox{Inv}_{o_{2}}(W)$.\end{enumerate}

$ $

Olga Radko: Department of Mathematics, UCLA

\textit{E-mail address: radko@math.ucla.edu}

$ $

Emmanuel Tsukerman: Stanford University

\textit{E-mail address: emantsuk@stanford.edu}
\end{document}